\def\amsbb{\use@mathgroup \M@U \symAMSb}
\renewcommand{\mathbb}{\amsbb}
\newcommand{\tri}{\blacktriangle}
\theoremstyle{plain}
\newtheorem{Thm}{Theorem}[section]
\newtheorem{Thm*}{Theorem}[section]
\newtheorem{Thm'}[Thm]{"Theorem"}
\newtheorem{Cor}[Thm]{Corollary}
\newtheorem{Prop}[Thm]{Proposition}
\newtheorem{Lem}[Thm]{Lemma}
\newtheorem{Cl}[Thm]{Claim}
\theoremstyle{definition}
\newtheorem{Emp}[Thm]{}
\numberwithin{equation}{section}
\newcommand{\nc}{\newcommand}
\nc{\lm}{\lambda}
\newcommand{\pp}{\boxtimes}
\newcommand{\ov}{\overline}
\newcommand{\B}[1]{\mathbb#1}
\newcommand{\cal}[1]{\mathcal{#1}}
\newcommand{\isom}{\overset {\thicksim}{\to}}
\newcommand{\om}{\omega}
\newcommand{\lra}{\longrightarrow}
\newcommand{\lla}{\longleftarrow}
\newcommand{\hra}{\hookrightarrow}
\newcommand{\wt}{\widetilde}
\newcommand{\Gm}{\Gamma}
\newcommand{\lan}{\langle}
\newcommand{\ran}{\rangle}
\renewcommand{\P}{P}
\newcommand{\Dt}{\Delta}
\newcommand{\un}[1]{\underline{#1}}
\newcommand{\al}{\alpha}
\newcommand{\form}[1]{(\ref{Eq:#1})}
\newcommand{\rs}[1]{Section~\ref{S:#1}}
\newcommand{\rl}[1]{Lemma~\ref{L:#1}}
\newcommand{\rcl}[1]{Claim~\ref{C:#1}}
\newcommand{\rp}[1]{Proposition~\ref{P:#1}}
\newcommand{\re}[1]{\ref{E:#1}}
\newcommand{\rco}[1]{Corollary~\ref{C:#1}}
\newcommand{\rt}[1] {Theorem~\ref{T:#1}}
\newcommand{\sm}{\smallsetminus}
\newcommand{\pr}{\operatorname{pr}}
\newcommand{\Isom}{\operatorname{Isom}}
\newcommand{\Spec}{\operatorname{Spec}}
\newcommand{\Aut}{\operatorname{Aut}}
\newcommand{\Gr}{\operatorname{Gr}}
\newcommand{\Tr}{\operatorname{Tr}}
\newcommand{\red}{\operatorname{red}}
\newcommand{\ev}{\operatorname{ev}}
\newcommand{\Fr}{\operatorname{Fr}}
\newcommand{\Id}{\operatorname{Id}}
\newcommand{\Fun}{\operatorname{Funct}}
\newcommand{\Ind}{\operatorname{Ind}}
\newcommand{\colim}{\operatorname{colim}}
\newcommand{\pt}{\operatorname{pt}}
\newcommand{\fq}{\B{F}_q}
\newcommand{\qlbar}{{\overline{\mathbb Q}_\ell}}
\newcommand{\fqbar}{\overline{\B{F}}_q}
\newcommand{\cD}{\mathcal{D}}
\newcommand{\cont}{\on{cont}}
\newcommand{\Fix}{\on{Fix}}
\newcommand{\unit}{\on{unit}}
\theoremstyle{definition}
\theoremstyle{remark}
\numberwithin{equation}{section}
\nc{\renc}{\renewcommand}
\nc{\ssec}{\subsection}
\nc{\sssec}{\subsubsection}
\nc{\on}{\operatorname}
\nc\ol{\overline}
\nc\tboxtimes{\wt{\boxtimes}}
\nc\tstar{\wt{\star}}
\nc{\alp}{a}
\nc{\ZZ}{{\mathbb Z}}
\nc{\NN}{{\mathbb N}}
\nc{\OO}{{\mathbb O}}
\renc{\SS}{{\mathbb S}}
\nc{\DD}{{\mathbb D}}
\nc{\GG}{{\mathbb G}}
\nc{\Fq}{{\mathbb F}_q}
\nc{\Fqb}{\ol{{\mathbb F}}_q}
\nc{\Ql}{\ol{{\mathbb Q}_\ell}}
\nc{\id}{\text{id}}
\nc\X{\mathcal X}
\nc{\Loc}{\on{Loc}}
\nc{\Pic}{\on{Pic}}
\nc{\Bun}{\on{Bun}}
\nc{\IC}{\on{IC}}
\nc{\Fls}{\on{Fl}^{\frac{\infty}{2}}}
\nc{\ICs}{\on{IC}^{\frac{\infty}{2}}}
\nc{\ICsl}{\on{IC}^{\lambda+\frac{\infty}{2}}}
\nc{\ICslm}{\on{IC}^{\lambda+\frac{\infty}{2},-}}
\nc{\ICsm}{\on{IC}^{\frac{\infty}{2},-}}
\nc{\Sh}{\frak{Sh}}
\nc{\pos}{{\on{pos}}}
\nc{\Conv}{\on{Conv}}
\nc{\Sph}{\on{Sph}}
\nc{\Sat}{\on{Sat}}
\nc{\Sym}{\on{Sym}}
\nc{\BunBb}{\overline{\Bun}_B}
\nc{\BunNb}{\overline{\Bun}_N}
\nc{\BunTb}{\overline{\Bun}_T}
\nc{\BunBbm}{\overline{\Bun}_{B^-}}
\nc{\BunBbel}{\overline{\Bun}_{B,el}}
\nc{\BunBbmel}{\overline{\Bun}_{B^-,el}}
\nc{\Buno}{\overset{o}{\Bun}}
\nc{\BunPb}{{\overline{\Bun}_P}}
\nc{\BunBM}{\Bun_{B(M)}}
\nc{\BunBMb}{\overline{\Bun}_{B(M)}}
\nc{\BunPbw}{{\widetilde{\Bun}_P}}
\nc{\BunBP}{\widetilde{\Bun}_{B,P}}
\nc{\GUb}{\overline{G/U}}
\nc{\GUPb}{\overline{G/U(P)}}
\nc{\Hhom}{\underline{\on{Hom}}}
\nc\syminfty{\on{Sym}^{\infty}}
\nc\lal{\ol{\kappa_x}}
\nc\xl{\ol{x}}
\nc\thl{\ol{\theta}}
\nc\nul{\ol{\nu}}
\nc\mul{\ol{\mu}}
\nc{\oX}{\overset{o}{X}{}}
\nc{\hl}{\overset{\leftarrow}h{}}
\nc{\hr}{\overset{\rightarrow}h{}}
\nc{\M}{{\mathcal M}}
\nc{\N}{{\mathcal N}}
\nc{\F}{{\mathcal F}}
\nc{\D}{{\mathcal D}}
\nc{\Y}{{\mathcal Y}}
\nc{\G}{{\mathcal G}}
\nc{\E}{{\mathcal E}}
\nc{\CalC}{{\mathcal C}}
\nc\Dh{\widehat{\D}}
\renewcommand{\O}{{\mathcal O}}
\nc{\C}{{\mathcal C}}
\nc{\K}{{\mathcal K}}
\renewcommand{\H}{{\mathcal H}}
\nc{\T}{{\mathcal T}}
\nc{\V}{{\mathcal V}}
\renc{\P}{{\mathcal P}}
\nc{\A}{{\mathcal A}}
\nc{\U}{{\mathcal U}}
\nc{\frn}{{\check{\mathfrak u}(P)}}
\nc{\fC}{\mathfrak C}
\nc{\p}{\mathfrak p}
\nc{\q}{\mathfrak q}
\nc\f{{\mathfrak f}}
\nc{\qo}{{\mathfrak q}}
\nc{\po}{{\mathfrak p}}
\nc{\s}{{\mathfrak s}}
\nc\w{\text{w}}
\nc{\act}{\on{act}}
\nc\Mod{\on{Mod}}
\nc{\pw}{\widetilde{\mathfrak p}}
\nc{\qw}{\widetilde{\mathfrak q}}
\nc{\jw}{\widetilde j}
\nc{\grb}{\overline{\Gr}}
\nc{\I}{\mathcal I}
\nc{\kappach}{{\check\kappa_x}}
\nc{\Lambdach}{{\check\Lambda}{}}
\nc{\much}{{\check\mu}}
\nc{\omegach}{{\check\omega}}
\nc{\nuch}{{\check\nu}}
\nc{\etach}{{\check\eta}}
\nc{\alphach}{{\checka}}
\nc{\oblvtach}{{\check\oblvta}}
\nc{\pich}{{\check\pi}}
\nc{\ch}{{\check h}}
\nc{\Hb}{\overline{\H}}
\nc{\BA}{{\mathbb{A}}}
\nc{\BC}{{\mathbb{C}}}
\nc{\BE}{{\mathbb{E}}}
\nc{\BF}{{\mathbb{F}}}
\nc{\BG}{{\mathbb{G}}}
\nc{\BM}{{\mathbb{M}}}
\nc{\BO}{{\mathbb{O}}}
\nc{\BD}{{\mathbb{D}}}
\nc{\BL}{{\mathbb{L}}}
\nc{\BN}{{\mathbb{N}}}
\nc{\BP}{{\mathbb{P}}}
\nc{\BQ}{{\mathbb{Q}}}
\nc{\BR}{{\mathbb{R}}}
\nc{\BV}{{\mathbb{V}}}
\nc{\BW}{{\mathbb{W}}}
\nc{\BZ}{{\mathbb{Z}}}
\nc{\BS}{{\mathbb{S}}}
\nc{\CA}{{\mathcal{A}}}
\nc{\CB}{{\mathcal{B}}}
\nc{\CE}{{\mathcal{E}}}
\nc{\CF}{{\mathcal{F}}}
\nc{\CG}{{\mathcal{G}}}
\nc{\CH}{{\mathcal{H}}}
\nc{\CL}{{\mathcal{L}}}
\nc{\CC}{{\mathcal{C}}}
\nc{\CM}{{\mathcal{M}}}
\nc{\CN}{{\mathcal{N}}}
\nc{\cCN}{\check{{\mathcal{N}}}}
\nc{\CK}{{\mathcal{K}}}
\nc{\CO}{{\mathcal{O}}}
\nc{\CP}{{\mathcal{P}}}
\nc{\CQ}{{\mathcal{Q}}}
\nc{\CR}{{\mathcal{R}}}
\nc{\CS}{{\mathcal{S}}}
\nc{\CT}{{\mathcal{T}}}
\nc{\CU}{{\mathcal{U}}}
\nc{\CV}{{\mathcal{V}}}
\nc{\CW}{{\mathcal{W}}}
\nc{\CX}{{\mathcal{X}}}
\nc{\CY}{{\mathcal{Y}}}
\nc{\CZ}{{\mathcal{Z}}}
\nc{\CI}{{\mathcal{I}}}
\nc{\CJ}{{\mathcal{J}}}
\nc{\csM}{{\check{\mathcal A}}{}}
\nc{\oM}{{\overset{\circ}{\mathcal M}}{}}
\nc{\obM}{{\overset{\circ}{\mathbf M}}{}}
\nc{\oCA}{{\overset{\circ}{\mathcal A}}{}}
\nc{\obA}{{\overset{\circ}{\mathbf A}}{}}
\nc{\ooM}{{\overset{\circ}{M}}{}}
\nc{\osM}{{\overset{\circ}{\mathsf M}}{}}
\nc{\vM}{{\overset{\bullet}{\mathcal M}}{}}
\nc{\nM}{{\underset{\bullet}{\mathcal M}}{}}
\nc{\oD}{{\overset{\circ}{\mathcal D}}{}}
\nc{\obD}{{\overset{\circ}{\mathbf D}}{}}
\nc{\oA}{{\overset{\circ}{\mathbb A}}{}}
\nc{\op}{{\overset{\bullet}{\mathbf p}}{}}
\nc{\cp}{{\overset{\circ}{\mathbf p}}{}}
\nc{\oU}{{\overset{\bullet}{\mathcal U}}{}}
\nc{\oZ}{{\overset{\circ}{\mathcal Z}}{}}
\nc{\ofZ}{{\overset{\circ}{\mathfrak Z}}{}}
\nc{\oF}{{\overset{\circ}{\fF}}}
\nc{\fa}{{\mathfrak{a}}}
\nc{\fb}{{\mathfrak{b}}}
\nc{\fd}{{\mathfrak{d}}}
\nc{\ff}{{\mathfrak{f}}}
\nc{\fg}{{\mathfrak{g}}}
\nc{\fgl}{{\mathfrak{gl}}}
\nc{\fh}{{\mathfrak{h}}}
\nc{\fj}{{\mathfrak{j}}}
\nc{\fl}{{\mathfrak{l}}}
\nc{\fm}{{\mathfrak{m}}}
\nc{\fn}{{\mathfrak{n}}}
\nc{\fu}{{\mathfrak{u}}}
\nc{\fp}{{\mathfrak{p}}}
\nc{\fr}{{\mathfrak{r}}}
\nc{\fs}{{\mathfrak{s}}}
\nc{\ft}{{\mathfrak{t}}}
\nc{\fz}{{\mathfrak{z}}}
\nc{\fsl}{{\mathfrak{sl}}}
\nc{\hsl}{{\widehat{\mathfrak{sl}}}}
\nc{\hgl}{{\widehat{\mathfrak{gl}}}}
\nc{\hg}{{\widehat{\mathfrak{g}}}}
\nc{\htt}{{\widehat{\mathfrak{t}}}}
\nc{\chg}{{\widehat{\mathfrak{g}}}{}^\vee}
\nc{\hn}{{\widehat{\mathfrak{n}}}}
\nc{\chn}{{\widehat{\mathfrak{n}}}{}^\vee}
\nc{\fA}{{\mathfrak{A}}}
\nc{\fB}{{\mathfrak{B}}}
\nc{\fD}{{\mathfrak{D}}}
\nc{\fE}{{\mathfrak{E}}}
\nc{\fF}{{\mathfrak{F}}}
\nc{\fG}{{\mathfrak{G}}}
\nc{\fK}{{\mathfrak{K}}}
\nc{\fL}{{\mathfrak{L}}}
\nc{\fM}{{\mathfrak{M}}}
\nc{\fN}{{\mathfrak{N}}}
\nc{\fP}{{\mathfrak{P}}}
\nc{\fU}{{\mathfrak{U}}}
\nc{\fV}{{\mathfrak{V}}}
\nc{\fZ}{{\mathfrak{Z}}}
\nc{\ba}{{\mathbf{a}}}
\nc{\bc}{{\mathbf{c}}}
\nc{\bd}{{\mathbf{d}}}
\nc{\bbf}{{\mathbf{f}}}
\nc{\be}{{\mathbf{e}}}
\nc{\bi}{{\mathbf{i}}}
\nc{\bj}{{\mathbf{j}}}
\nc{\bn}{{\mathbf{n}}}
\nc{\bo}{{\mathbf{o}}}
\nc{\bp}{{\mathbf{p}}}
\nc{\bq}{{\mathbf{q}}}
\nc{\bu}{{\mathbf{u}}}
\nc{\bv}{{\mathbf{v}}}
\nc{\bx}{{\mathbf{x}}}
\nc{\by}{{\mathbf{y}}}
\nc{\bw}{{\mathbf{w}}}
\nc{\bA}{{\mathbf{A}}}
\nc{\bK}{{\mathbf{K}}}
\nc{\bB}{{\mathbf{B}}}
\nc{\bF}{{\mathbf{F}}}
\nc{\bC}{{\mathbf{C}}}
\nc{\bG}{{\mathbf{G}}}
\nc{\bD}{{\mathbf{D}}}
\nc{\bE}{{\mathbf{E}}}
\nc{\bH}{{\mathbf{H}}}
\nc{\bI}{{\mathbf{I}}}
\nc{\bL}{{\mathbf{L}}}
\nc{\bM}{{\mathbf{M}}}
\nc{\bN}{{\mathbf{N}}}
\nc{\bO}{{\mathbf{O}}}
\nc{\bV}{{\mathbf{V}}}
\nc{\bW}{{\mathbf{W}}}
\nc{\bX}{{\mathbf{X}}}
\nc{\bZ}{{\mathbf{Z}}}
\nc{\bS}{{\mathbf{S}}}
\nc{\sA}{{\mathsf{A}}}
\nc{\sB}{{\mathsf{B}}}
\nc{\sC}{{\mathsf{C}}}
\nc{\sD}{{\mathsf{D}}}
\nc{\sF}{{\mathsf{F}}}
\nc{\sK}{{\mathsf{K}}}
\nc{\sM}{{\mathsf{M}}}
\nc{\sO}{{\mathsf{O}}}
\nc{\sW}{{\mathsf{W}}}
\nc{\sQ}{{\mathsf{Q}}}
\nc{\sP}{{\mathsf{P}}}
\nc{\sZ}{{\mathsf{Z}}}
\nc{\sr}{{\mathsf{r}}}
\nc{\bk}{{\mathsf{k}}}
\nc{\sg}{{\mathsf{g}}}
\nc{\sff}{{\mathsf{f}}}
\nc{\sfe}{{\mathsf{e}}}
\nc{\sfj}{{\mathsf{j}}}
\nc{\sfb}{{\mathsf{b}}}
\nc{\sfc}{{\mathsf{c}}}
\nc{\sd}{{\mathsf{d}}}
\nc{\sv}{{\mathsf{v}}}
\nc{\BK}{{\bar{K}}}
\nc{\tA}{{\widetilde{\mathbf{A}}}}
\nc{\tB}{{\widetilde{\mathcal{B}}}}
\nc{\tg}{{\widetilde{\mathfrak{g}}}}
\nc{\tG}{{\widetilde{G}}}
\nc{\TM}{{\widetilde{\mathbb{M}}}{}}
\nc{\tO}{{\widetilde{\mathsf{O}}}{}}
\nc{\tU}{{\widetilde{\mathfrak{U}}}{}}
\nc{\TZ}{{\tilde{Z}}}
\nc{\tx}{{\tilde{x}}}
\nc{\tbv}{{\tilde{\bv}}}
\nc{\tfP}{{\widetilde{\mathfrak{P}}}{}}
\nc{\tz}{{\tilde{\zeta}}}
\nc{\tmu}{{\tilde{\mu}}}
\nc{\urho}{\underline{\pi}}
\nc{\uB}{\underline{B}}
\nc{\uC}{{\underline{\mathbb{C}}}}
\nc{\ui}{\underline{i}}
\nc{\uj}{\underline{j}}
\nc{\ofP}{{\overline{\mathfrak{P}}}}
\nc{\oB}{{\overline{\mathcal{B}}}}
\nc{\og}{{\overline{\mathfrak{g}}}}
\nc{\oI}{{\overline{I}}}
\nc{\eps}{\varepsilon}
\nc{\hrho}{{\hat{\pi}}}
\nc{\one}{{\mathbf{1}}}
\nc{\two}{{\mathbf{t}}}
\nc{\Hilb}{{\mathop{\operatorname{\rm Hilb}}}}
\nc{\CHom}{{\mathop{\operatorname{{\mathcal{H}}\it om}}}}
\nc{\de}{{\mathop{\operatorname{\rm def}}}}
\nc{\length}{{\mathop{\operatorname{\rm length}}}}
\nc{\supp}{{\mathop{\operatorname{\rm supp}}}}
\nc{\Cliff}{{\mathsf{Cliff}}}
\nc{\Fib}{{\mathsf{Fib}}}
\nc{\Coh}{{\mathsf{Coh}}}
\nc{\QCoh}{{\on{QCoh}}}
\nc{\IndCoh}{{\on{IndCoh}}}
\nc{\FCoh}{{\mathsf{FCoh}}}
\nc{\Func}{{\on{funct}}}
\nc{\reg}{{\text{\rm reg}}}
\nc{\cplus}{{\mathbf{C}_+}}
\nc{\cminus}{{\mathbf{C}_-}}
\nc{\cthree}{{\mathbf{C}_*}}
\nc{\Qbar}{{\bar{Q}}}
\nc\Eis{\on{Eis}}
\nc\Eisb{\ol\Eis{}}
\nc\Eisr{\on{Eis}^{rat}{}}
\nc\wh{\widehat}
\nc{\barZ}{\overline{Z}{}}
\nc{\barbarZ}{\overline{\barZ}{}}
\nc{\barpi}{\overline\iota}
\nc{\barbarpi}{\overline\barpi}
\nc{\barpip}{\overline\iota{}^+}
\nc{\barpim}{\overline\iota{}^-}
\nc{\fqb}{\ol{\fq}{}}
\nc{\fpb}{\ol{\fp}{}}
\nc{\fpr}{{\fp^{rat}}{}}
\nc{\fqr}{{\fq^{rat}}{}}
\nc{\hattimes}{\wh\otimes}
\nc{\bh}{{\bar{h}}}
\nc{\bOmega}{{\overline{\Omega(\check \fn)}}}
\nc{\seq}[1]{\stackrel{#1}{\sim}}
\nc{\cT}{{\check{T}}}
\nc{\cG}{{\check{G}}}
\nc{\cM}{{\check{M}}}
\nc{\cB}{{\check{B}}}
\nc{\cN}{{\check{N}}}
\nc{\ct}{{\check{\mathfrak t}}}
\nc{\cg}{{\check{\fg}}}
\nc{\hcg}{{\widehat{\check{\fg}}}}
\nc{\cb}{{\check{\fb}}}
\nc{\cn}{{\check{\fn}}}
\nc{\cLambda}{{\check\Lambda}}
\nc{\cla}{{\check\kappa_x}}
\nc{\cmu}{{\check\mu}}
\nc{\clambda}{{\check\lambda}}
\nc{\cnu}{{\check\nu}}
\nc{\ceta}{{\check\eta}}
\nc{\DefbE}{{\on{Def}_{\cB}(E_\cT)}}
\nc{\imathb}{{\ol{\imath}}}
\nc{\rlr}{\overset{\longrightarrow}{\underset{\longrightarrow}\longleftarrow}}
\nc{\KG}{K\backslash G}
\nc{\comult}{{co\text{-}mult}}
\nc{\counit}{{co\text{-}unit}}
\nc{\uHom}{{\underline{\Maps}}}
\nc{\dgSch}{\on{Sch}}
\nc{\affdgSch}{\on{Sch}^{\on{aff}}}
\nc{\affSch}{\on{Sch}^{\on{aff}}}
\nc{\Groupoids}{\on{Grpd}}
\nc{\inftygroup}{\on{Spc}}
\nc{\inftyCat}{\infty\on{-Cat}}
\nc{\StinftyCat}{\inftyCat^{\on{St}}}
\nc{\MoninftyCat}{\infty\on{-Cat}^{\on{Mon}}}
\nc{\SymMoninftyCat}{\infty\on{-Cat}^{\on{SymMon}}}
\nc{\SymMonStinftyCat}{\on{DGCat}^{\on{SymMon}}}
\nc{\MonStinftyCat}{\on{DGCat}^{\on{Mon}}}
\nc{\inftystack}{\on{Stk}}
\nc{\inftystackalg}{Stk^{1\text{-}alg}}
\nc{\inftyprestack}{\on{PreStk}}
\nc{\inftydgnearstack}{\on{NearStk}}
\nc{\inftydgstack}{\on{Stk}}
\nc{\inftydgstackalg}{DGStk^{1\text{-}alg}}
\nc{\inftydgprestack}{\on{PreStk}}
\nc{\dgindSch}{\on{indSch}}
\nc{\indSch}{{}^{\on{cl}}\!\on{indSch}}
\nc{\infSch}{\on{infSch}}
\nc{\dr}{{\on{dR}}}
\nc{\KER}{\on{KER}}
\nc{\mmod}{{\on{-}\!{\mathbf{mod}}}}
\nc{\starr}{\text{\dh}}
\nc{\Spectra}{\on{Spectra}}
\nc{\Crys}{\on{Crys}}
\nc{\oblv}{{\mathbf{oblv}}}
\nc{\ind}{{\mathbf{ind}}}
\nc{\coind}{{\mathbf{coind}}}
\nc{\inv}{{\mathbf{inv}}}
\nc{\triv}{{\mathbf{triv}}}
\nc{\CMaps}{{\mathcal Maps}}
\nc{\Maps}{\on{Maps}}
\nc{\bMaps}{\mathbf{Maps}}
\nc{\BMaps}{\ul{\on{Maps}}}
\nc{\Grid}{\on{Grid}}
\nc{\hGrid}{\on{Grid}^{\geq\,\on{dgnl}}}
\nc{\Diag}{\on{Diag}}
\nc{\bDelta}{\mathbf{\Delta}}
\nc{\tCateg}{(\infty\on{-2)-Cat}}
\nc{\ul}{\underline}
\nc{\Seg}{\on{Seq}}
\nc{\biSeg}{\on{bi-Seq}}
\nc{\triSeg}{\on{tri-Seq}}
\nc{\quadSeg}{\on{quad-Seq}}
\nc{\nSeg}{\on{n-Seq}}
\nc{\Segm}{\on{Seg}^{\on{mkd}}}
\nc{\fLm}{\fL^{\on{mkd}}}
\nc{\inftyCatm}{\inftyCat^{\on{mkd}}}
\nc{\Blocks}{\mathbf{Blocks}}
\nc{\Snakes}{\mathbf{Snakes}}
\nc{\bifL}{\on{bi-}\!\fL}
\nc{\Sets}{\on{Sets}}
\nc{\Ran}{{\on{Ran}}}
\nc{\ren}{{\on{ren}}}
\nc{\unren}{{\on{unren}}}
\nc{\constr}{{\on{constr}}}
\nc{\naive}{{\on{naive}}}
\nc{\true}{{\on{true}}}
\nc{\QProj}{{\on{QProj}}}
\nc{\Vect}{\on{Vect}}
\nc{\Shv}{\on{Shv}}
\nc{\unn}{\mathbf{union}}
\nc{\Spc}{\on{Spc}}
\nc{\ppart}{(\!(t)\!)}
\nc{\qqart}{[\![t]\!]}
\nc{\Dmod}{\on{D-mod}}
\nc{\ocD}{\overset{\circ}{\cD}}
\nc{\sfp}{\mathsf{p}}
\nc{\sfq}{\mathsf{q}}
\nc{\DGCat}{\on{DGCat}}
\renc{\det}{\on{det}}
\nc{\Conf}{\on{Conf}}
\nc{\Whit}{\on{Whit}}
\nc{\Reg}{\on{Reg}}
\nc{\BunNbx}{(\BunNb)_{\infty\cdot x}}
\nc{\bHecke}{\overset{\bullet}{\on{Hecke}}}
\nc{\Hecke}{\on{Hecke}}
\nc{\bCZ}{\ol\CZ}
\nc{\oCZ}{\overset{\circ}\CZ}
\nc{\boCZ}{\ol{\oCZ}}
\nc{\sotimes}{\overset{!}\otimes}
\nc{\semiinf}{\frac{\infty}{2}}
\nc{\coInd}{\on{coInd}}
\nc{\bCM}{\overset{\bullet}\CM{}}
\nc{\bCF}{\overset{\bullet}\CF{}}
\nc{\SI}{\on{SI}}
\nc{\KL}{\on{KL}}
\nc{\Av}{\on{Av}}
\nc{\Inv}{\on{Inv}}
\nc{\KM}{\on{KM}}
\nc{\LocSys}{\on{LocSys}}
\nc{\tr}{\on{tr}}
\nc{\LT}{\on{LT}}
\begin{document}

\title[Local terms for the categorical trace]
{Local terms for the categorical trace}

\date{\today}

\author{Dennis Gaitsgory}
\address{Max Planck Institute for Mathematics\\
Vivatsgasse 7, 53111 Bonn, Germany}
\email{gaitsgde@mpim-bonn.mpg.de} 


\author{Yakov Varshavsky}
\address{Einstein Institute of Mathematics\\
Edmond J. Safra Campus\\
The Hebrew University of Jerusalem\\
Givat Ram, Jerusalem, 9190401, Israel}
\email{yakov.varshavsky@mail.huji.ac.il}

\begin{abstract}
In this paper we introduce the categorical ``true local terms'' maps for Artin stacks and show that they are additive and commute with proper pushforwards, smooth pullbacks and specializations. In particular, we generalizing results of \cite{Va} to this setting.

As an application, we supply proofs of two theorems stated in \cite{main}. Namely, we show that  the ``true local terms'' of the Frobenius endomorphism coincide with the ``naive local terms'' and that the ``naive local terms'' commute with $!$-pushforwards. The latter result is a categorical version of the classical Grothendieck--Lefschetz trace formula.
\end{abstract}

\maketitle


\tableofcontents

\section*{Introduction}

\begin{Emp}
Let $k$ be an algebraically closed field.

\smallskip

(a) Let $X$ be an Artin stack of finite presentation over $k$. To $X$ we can associate two DG categories: category $\Shv(X)$ of $\qlbar$-adic sheaves on $X$ (see \cite[Appendix~F]{main}) and category $\Shv(X)^{\ren}:=\Ind\Shv(X)^{\constr}$ of
ind-constructible sheaves (see \cite[Section~F.5]{main}). We have a natural fully faithful {\em renormalization functor}
\[
\ren_X:\Shv(X)\to \Shv(X)^{\ren},
\]
which has a continuous right adjoint $\unren_X$.

\smallskip

(b) Let $c=(c_l,c_r):C\to X\times X$ be a morphism of Artin stacks of finite presentation over $k$, which we call a {\em correspondence}.
A correspondence $c$ gives rise to continuous endofunctors
\[
(c_l)_{\tri}\circ c_r^!:\Shv(X)\to \Shv(X)\text{ and } (c_l)_{*}\circ c_r^!:\Shv(X)^{\ren}\to \Shv(X)^{\ren},
\]
where $(-)_{\tri}$ denote the renormalized pushforward (see Section~\re{ren}),  and we denote both of these endofunctors
by $[c]$.

\smallskip

(c) Both DG categories $\Shv(X)$ and $\Shv(X)^{\ren}$ are compactly generated, thus dualizable, hence one can consider traces  $\Tr(\Shv(X),[c])$ and $\Tr(\Shv(X)^{\ren},[c])$, which are objects of the $\infty$-category $\Vect$ of $\qlbar$-vector spaces. Furthermore, by the trace formalism (see \cite[Section~3.2]{GKRV}) the renormalization functor $\ren_X$ gives rise to a morphism between traces
\[
\Tr(\ren_X,[c]):\Tr(\Shv(X),[c])\to \Tr(\Shv(X)^{\ren},[c]).
\]
\end{Emp}

\begin{Emp} \label{E:trueloc}
{\bf True local terms.} Let  $c=(c_l,c_r):C\to X\times X$ be a correspondence between Artin stacks of finite presentation over $k$, and
let $\A\in\Shv(X)^{\constr}$ be a constructible sheaf.

\smallskip

(a) Extending the construction of \cite[Section~1.2.2]{Va}, to this data one can associate the {\em trace map}
\[
{\cal Tr}_{c,\A}:\CHom_{\Shv(X)^{\ren}}(\A,[c](\A))\to  \Gm(\Fix(c),\om_{\Fix(c)}),
\]
where

\quad\quad $\bullet$ $\Fix(c):=C\times_{X\times X}X$ is the Artin stack of fixed points of $c$,

\quad\quad $\bullet$ $\om_Y$ denotes the dualizing sheaf of a stack $Y$, and

\quad\quad $\bullet$ $\Gm(Y,-):\Shv(Y)^{\ren}\to \Vect$ denotes the functor of global sections.

\smallskip

(b) On the other hand, using functoriality of trace maps (see \cite[Section~3.5.4]{GKRV})
one associates to this data the {\em Chern character} map
\[
\on{ch}_{c,\A}:\CHom_{\Shv(X)^{\ren}}(\A,[c](\A))\to  \Tr(\Shv(X)^{\ren},[c]).
\]

\smallskip

(c) The first goal of the paper is to associate to a correspondence $c$ a {\em true local terms} map
\[
\LT^{\true}_{c}: \Tr(\Shv(X)^{\ren},[c])\to \Gm(\Fix(c),\om_{\Fix(c)})
\]
such that the map of part (a) decomposes as
\[
{\cal Tr}_{c,\A}\simeq \LT^{\true}_{c}\circ\on{ch}_{c,\A}.
\]

\smallskip

(d) Slightly abusing the notation, we also denote the composition
\[
\Tr(\Shv(X),[c])\overset{\Tr(\ren_X,[c])}{\lra} \Tr(\Shv(X)^{\ren},[c])\overset{\LT^{\true}_{c}}{\lra} \Gm(\Fix(c),\om_{\Fix(c)})
\]
by $\LT^{\true}_{c}$ and call it the {\em true local terms} map  as well.
\end{Emp}

\begin{Emp}
{\bf Functoriality.} \hfill

\smallskip

The main technical result of the paper asserts that the true local term maps commute with proper pushforwards, smooth pullbacks and restrictions to closed subschemes $Z\subseteq X$ such that $c$ is {\em contracting near $Z$}, which we are going to formulate now (see \rs{functor} for a 
slightly more general assertion and more details): Consider the commutative diagram

\begin{equation} \label{Eq:morph of corr}
\begin{CD}
C @>c >> X\times X\\
@Vg VV @VVf\times f V\\
D @>d >> Y\times Y
\end{CD}
\end{equation}
of  Artin stacks of finite presentation over $k$, and denote by $g_{\Dt}:\Fix(c)\to \Fix(d)$ the induced map.
\end{Emp}

\begin{Thm} \label{T:Main} \hfill

\smallskip

\noindent (a) Every commutative  diagram \form{morph of corr} such that morphisms $f$ and $g$ are proper and safe\footnote{See Section~\re{safe} what {\em safe morphism} means.} gives rise to a homotopy commutative diagram
\begin{equation*}
\begin{CD}
\Tr(\Shv(X)^{\ren},[c])@>\LT_{c}^{\true}>> \Gm(\Fix(c),\om_{\Fix(c)})\\
@V\Tr([f_!]) VV @VV(g_{\Dt})_!V\\
\Tr(\Shv(Y)^{\ren},[c])@>\LT_{d}^{\true}>> \Gm(\Fix(d),\om_{\Fix(d)}).
\end{CD}
\end{equation*}

\smallskip

\noindent (b) Every commutative diagram \form{morph of corr} such that either

\smallskip

(i) morphisms $f$ and $g$ are smooth of the same relative dimension and $g_{\Dt}$ is \'etale

\smallskip

\noindent or

\smallskip

(ii) $f$ is a closed embedding, the diagram is Cartesian and $d$ is contracting near $f(X)\subseteq Y$

\smallskip

\noindent gives rise to a homotopy commutative diagram
\begin{equation*}
\begin{CD}
\Tr(\Shv(X)^{\ren},[c])@>\LT_{c}^{\true}>> \Gm(\Fix(c),\om_{\Fix(c)})\\
@A\Tr([f^*]) AA @VVg_{\Dt}^*V\\
\Tr(\Shv(Y)^{\ren},[c])@>\LT_{d}^{\true}>> \Gm(\Fix(d),\om_{\Fix(d)}).
\end{CD}
\end{equation*}
\end{Thm}

\begin{Emp}
{\bf Remarks.}

\smallskip

(a) As in \cite{Va}, in order to show \rt{Main}(b)(ii), we show that the true local terms commute with specializations,
and we apply this in the case of a specialization to the normal cone. Furthermore, for further reference, we divide the commutation with specialization assertion into two and prove the commutation with nearby cycles and with extensions of scalars.

\smallskip

(b) Using observations of Section~\re{trueloc}(c), the commutation of true local terms with proper pushforwards and specializations generalize the corresponding results of \cite{Va}. On the other hand, the commutation with smooth pullbacks seems to be new even in the classical setting of \cite{Va}. \footnote{Recently, the commutation of true local terms with smooth pullbacks in the classical setting appeared in \cite{FYZ}.}
\end{Emp}

\begin{Emp}
{\bf The case of the Frobenius endomorphism.}

\smallskip

(a) Assume from now that $k=\Fqb$, but that $X$ is defined over $\Fq$, so that it carries the \emph{geometric Frobenius} endomorphism,
denoted $\Fr$. Then we  can associate to $X$ the groupoid $X(\fq)$, and hence the (classical) vector space
$\Func(X(\fq),\qlbar)$.

In addition, the endomorphism $\Fr$ induces continuous endofunctors
\[
\Fr_{\tri}=\Fr_*:\Shv(X)\to \Shv(X)\text{ and }\Fr_*:\Shv(X)^{\ren}\to \Shv(X)^{\ren},
\]
hence we can form traces $\Tr(\Shv(X),\Fr_*), \Tr(\Shv(X)^{\ren},\Fr_*)\in \Vect$.

\smallskip

(b) Let $X$ and $Y$ be a pair of Artin stacks as above, and let $f:X\to Y$ be a morphism between them defined over $\fq$.
Then $f$ gives rise  to a map of groupoids $f(\fq):X(\fq)\to Y(\fq)$, and hence to a
map
\[
f(\fq)_!:\Func(X(\fq),\qlbar)\to \Func(Y(\fq),\qlbar)
\]
of $\qlbar$-vector spaces, given by the summation along the fibers.

Also $f$ gives rise to a functor $f_!:\Shv(X)\to \Shv(Y)$, admitting a continuous right adjoint, given by $f^!$, and interchanging the Frobenius actions.

Therefore by the trace formalism (see \cite[Section~3.2]{GKRV}) functor $f_!$ induces a map
\[
\Tr(f_!,\Fr_*):\Tr(\Shv(X),\Fr_*)\to \Tr(\Shv(Y),\Fr_*)
\]
in $\Vect$. Moreover, the assignment $f_!\mapsto \Tr(f_!,\Fr_*)$ is compatible with compositions of morphisms.

Assume in addition that $f$ is safe. Then $f$ gives rise to a functor $f_!:\Shv(X)^{\ren}\to \Shv(Y)^{\ren}$, admitting a continuous right adjoint, given by $f^!$, and hence induces a map between traces
\[
\Tr(f_!,\Fr_*):\Tr(\Shv(X)^{\ren},\Fr_*)\to \Tr(\Shv(Y)^{ \ren},\Fr_*)
\]
in $\Vect$.

\end{Emp}

\begin{Emp} \label{E:sheaffunc} {\bf Local terms and the sheaf--function correspondence.}
Let again $X$ be as above (an Artin stack over $\Fqb$, but defined over $\Fq$).

\smallskip

(a) Then one can associate to $X$ two pairs
\[
\LT_{X}^{\naive}, \LT_{X}^{\true}:\Tr(\Shv(X),\Fr_*)\to \Func(X(\fq),\qlbar)
\]
and
\[
\LT_{X}^{\naive}, \LT_{X}^{\true}:\Tr(\Shv(X)^{\ren},\Fr_*)\to \Func(X(\fq),\qlbar)
\]
of naturally defined maps called the {\em naive local term} and {\em true local term} maps, respectively.

\smallskip

Namely, the true local terms maps $\LT_{X}^{\true}$ are simply the maps $\LT_{c}^{\true}$ (see Section~\re{trueloc}), corresponding to the correspondence
$(\Fr,\Id):X\to X\times X$, while the naive local terms map $\LT_{X}^{\naive}$ is characterized by the condition that for every point
$x\in X(\fq)$ corresponding to the morphism $\eta_x:\pt:=\Spec\fqbar\to X$ the composition
\[
\Tr(\Shv(X),\Fr_*) \overset{\LT_{X}^{\naive}}{\lra} \Func(X(\fq),\qlbar)\overset{\ev_x}{\lra}\qlbar
\]
is equal to the map of traces
\[
\Tr(\eta_x^*,\Fr_*):\Tr(\Shv(X),\Fr_*)\to \Tr(\pt,\Fr_*)\simeq\qlbar,
\]
induced by the pullback $\eta_x^*:\Shv(X)\to \Shv(\pt)=\Vect$ (and similarly for $\Shv(X)^{\ren}$).

\smallskip

(b) Notice that for every $\A\in \Shv(X)^{\constr}$ we have $\Fr_*(\A)\in \Shv(X)^{\constr}$, so
the Chern character map of Section~\re{trueloc}(b) has the form
\[
\on{ch}_{X,\A}: \CHom_{\Shv(X)^{\constr}}(\A,\Fr_*(\A))\to \Tr(\Shv(X)^{\ren},\Fr_*).
\]
Furthermore, the composition
\[
\LT_{X}^{\naive}\circ  \on{ch}_{X,\A}: \CHom_{\Shv(X)^{\constr}}(\A,\Fr_*(\A))\to \Func(X(\fq),\qlbar)
\]
recovers the Grothendieck ``sheaf-function correspondence''.
\end{Emp}

As an application of \rt{Main} we prove the following result, stated as \cite[Theorems~22.1.9 and 22.2.8]{main}.

\begin{Thm} \label{T:local terms} \hfill

\smallskip

\noindent \em{(a)} For every $X$ as above, we have natural homotopies of morphisms
\[
\LT_{X}^{\naive}\simeq \LT_{X}^{\true}:\Tr(\Shv(X),\Fr_*)\to \Func(X(\fq),\qlbar)
\]
and
\[
\LT_{X}^{\naive}\simeq \LT_{X}^{\true}:\Tr(\Shv(X)^{\ren},\Fr_*)\to \Func(X(\fq),\qlbar)
\]

\smallskip

\noindent (b) The naive local term functor
\[
\LT^{\naive}:\Tr(\Shv(-),\Fr)\to \Func(-(\fq),\qlbar) \text{ (resp. }\LT^{\naive}:\Tr(\Shv(-)^{\ren},\Fr)\to \Func(-(\fq),\qlbar)\text{)}
\]
commutes with all $!$-pushforwards (resp. $!$-pushforwards with respect to the safe morphisms).

\smallskip

Namely, for every morphism $f:X\to Y$, the following diagram commutes up to a canonical homotopy:

\begin{equation} \label{Eq:LTF}
\begin{CD}
\Tr(\Shv(X),\Fr)@>\LT_{X}^{\naive}>>\Func(X(\fq),\qlbar)\\
@V\Tr(f_!,\Fr) VV @VVf(\fq)_! V\\
\Tr(\Shv(Y),\Fr)@>\LT_{Y}^{\naive}>>\Func(Y(\fq),\qlbar),
\end{CD}
\end{equation}
%
and the corresponding result for $\Shv(-)^{\ren}$ holds when $f$ is safe.
\end{Thm}

Combining \rt{local terms}(a) and Section~\re{sheaffunc}(b), we get the following consequence used in \cite{trace}.

\begin{Cor} \label{C:sheaf function}
For every Artin stack $X$ over $\fqbar$, defined over $\fq$ and every $\A\in \Shv(X)^{\constr}$ the composition
\[
\LT_{X}^{\true}\circ  \on{ch}_{X,\A}: \CHom_{\Shv(X)^{\constr}}(\A,\Fr_*(\A))\to \Func(X(\fq),\qlbar)
\]
equals the Grothendieck ``sheaf-function correspondence'' map.
\end{Cor}

\begin{Emp}
{\bf Strategy of the proof.} Let us explain how to deduce \rt{local terms} from \rt{Main}.

\smallskip

(i) Notice that the correspondence $(\Fr,\Id)$ is contracting near  every closed substack $Z\subseteq X$ defined over $\fq$. Therefore it follows from \rt{Main} that the true local terms maps $\LT_X^{\true}$ commute with $!$-pushforwards with respect to proper safe morphisms and
$*$-pullbacks with respect to morphisms, which are either smooth or closed embeddings.

\smallskip

(ii) Next we observe that in order to show \rt{local terms}(a), it suffices to show that true local terms commute with $*$-pullbacks for every $\eta_x:\pt\to X$  with $x\in X(\fq)$. Let $G_x:=\Aut_X(x)$ be the group of automorphisms of $x$. Then $\eta_x$ decomposes
as a composition of morphisms
\[
\eta_x:\pt\to \pt/(G_x)_{\red}\to \pt/(G_x)\to X,
\]
the first of which is smooth, the second one is a proper universal homemorphism, while the last one is a composition of an open and a closed embedding.  Therefore the commutation of  $\LT_X^{\true}$ with $\eta_x^*$ follows from the observations of (i).

\smallskip

(iii) Note that it suffices to show the assertion of \rt{local terms}(b) under an assumption that $Y=\pt$. Indeed,
using base change the assertion of \rt{local terms}(b) for $f$ is equivalent to the corresponding assertions for every fiber
$f^{-1}(x)\to\pt$ of $f$. Moreover, by additivity of traces and Noetherian induction, we can replace
$X$ by its open non-empty substack. Thus we can assume that $f$ decomposes as $X\to X'\to \pt$, where $X'$ is an affine scheme, and
$f':X\to X'$ is a gerbe, hence all fibers of $f'$ are classifying stacks $BG$. Thus it suffices to show the assertion when $X$ is either an affine
scheme or a classifying stack $BG$.

\smallskip

(iv) In the affine scheme case, by additivity we can assume that $X$ is projective, in which case the assertion follows part (a)
and the fact that $\LT_X^{\true}$ commute with $!$-pushforwards with respect to proper safe morphisms. In the case, when $X=BG$, we can separately consider the case when $G$ is connected and $G$ is finite. In the first case, the assertion follows from the $X=G$ case and Lang's theorem. In the second case, we can imbed $G$ into $GL_n$, and deduce the assertion from the $B(GL_n)$-case and the scheme case.
\end{Emp}

\begin{Emp}
{\bf Plan of the paper.} The paper is organized as follows:

\smallskip

In Section~1 we recall basic properties of DG categories of sheaves on Artin stacks,
mainly recalling results from \cite{main, serre}, and formulate standard properties of safe stacks and safe morphisms,
whose proofs are recalled in Appendix A.

In Section~2 we introduce correspondences and discuss properties of functors, induced by correspondences.

In Section~3 we introduce the true local terms map and its refined version.

In Section~4 we formulate properties of true local terms and deduce generalizations of \cite{Va}.

In Section~5 we deduce \rt{local terms} from results formulated in Section~4.

In Section~6 we state the result that true local terms commute with nearby cycles and extensions of scalars, and deduce the
assertion about contracting correspondences, formulated in Section~4, using deformation to the normal cone.

In Section~7 we provide proof of Propositions~\ref{P:compatpf} and \ref{P:compatpb}.

In Sections~8 and 9 we provide proofs of functorial properties of true local terms, formulated in Sections~3, 4 and 6.

Finally, in Appendix B we review properties of quasi-smooth maps, used earlier. 
\end{Emp}

\begin{Emp}
{\bf Acknowledgments.}
The research of Y.V. was partially supported by the ISF grant 2091/21. 
We thank Tony Feng from whom we learned about the notion of pullable squares (introduced in \cite{FYZ}) which allowed us to make the proof of the commutation of true local terms with smooth pullbacks more conceptual.
\end{Emp}

\section{Sheaves on Artin stacks}

Let $k$ be an algebraically closed field. All Artin stacks will be assumed to be of finite presentation over $k$.

\begin{Emp} \label{E:sheaves}
{\bf Sheaves on Artin stacks.}

\smallskip

(a) As in \cite[Appendix~F]{main} to every Artin stack $X$ one associates a $\qlbar$-linear stable $\infty$-category $\Shv(X)$ of
ind-constructible $\qlbar$-sheaves. This category is compactly generated, thus dualizable, and we denote by $\Shv(X)^c\subseteq\Shv(X)^{\constr}\subseteq\Shv(X)$
the full subcategories of compact objects and of constructible sheaves, respectively.

\smallskip

(b) To every morphism $f:X\to Y$ of Artin stacks, one can associate two adjoint pairs $(f_!,f^!)$ and $(f^*,f_*)$ of functors between
$\Shv(X)$ and $\Shv(Y)$. The functors $f_!,f^!$ and $f^*$ are automatically continuous.
\end{Emp}

\begin{Emp} \label{E:ren}
{\bf Renormalized $*$-pushforward.}

\smallskip

(a) As in \cite[Section~A.2.3]{serre}, to every morphism $f:X\to Y$ of Artin stacks one associates the renormalized $*$-pushforward functor
\[
f_{\tri}:\Shv(X)\to \Shv(Y),
\]
defined as the unique continuous functor, whose restriction to $\Shv(X)^c$ is $f_*|_{\Shv(X)^c}$.

\smallskip

(b) By definition, we have  a natural morphism of functors
\[
\on{can}_f:f_{\tri}\to f_*,
\]
whose restriction to $\Shv(X)^c$ is an isomorphism. Moreover, $\on{can}_f$ is an isomorphism if and only if $f_*$ is continuous (or, equivalently, when $f^*$ preserves compact objects).

\smallskip

(c) For a composition $X\overset{f}{\to} Y\overset{g}{\to}Z$ of Artin stacks, we have a canonical morphism
\begin{equation} \label{Eq:compren}
g_{\tri}\circ f_{\tri}\to (g\circ f)_{\tri}
\end{equation}
of functors $\Shv(X)\to \Shv(Z)$, whose restriction to  $\Shv(X)^c$ is the morphism
\[
g_{\tri}\circ f_{\tri}|_{\Shv(X)^c}=g_{\tri}\circ f_{*}|_{\Shv(X)^c}\overset{\on{can}_g}{\lra} g_{*}\circ f_{*}|_{\Shv(X)^c}\simeq (g\circ f)_{*}|_{\Shv(X)^c}=(g\circ f)_{\tri}|_{\Shv(X)^c}.
\]
In particular, morphism \form{compren} is automatically an isomorphism, if $f_{*}(\Shv(X)^c)\subseteq \Shv(Y)^c$.

\smallskip

(c)' By construction, morphism \form{compren} can be characterized as the unique morphism making the following diagram homotopy commutative:
\[
\CD
g_{\tri}\circ f_{\tri} @>\form{compren}>> (g\circ f)_{\tri}\\
@V\on{can}_g\circ \on{can}_f VV @VV\on{can}_{g\circ f}V\\
g_{*}\circ f_{*} @>\sim>> (g\circ f)_{*}.
\endCD
\]

\smallskip

(d)  For every Cartesian diagram of Artin stacks
\begin{equation*} 
\CD
A @>a>> C\\
@VgVV @VVfV\\
B @>b>> D,
\endCD
\end{equation*}
we have a canonical morphism
\begin{equation} \label{Eq:bcren}
g_{\tri}\circ a^!\to b^!\circ f_{\tri}
\end{equation}
of functors $\Shv(C)\to\Shv(B)$, whose restriction to
$\Shv(C)^c$ is the composition
\[
 g_{\tri}\circ a^!|_{\Shv(C)^c}\overset{\on{can}_g}{\lra} g_{*}\circ a^!|_{\Shv(C)^c}\overset{\text{base change}}{\simeq}
  b^!\circ f_{*}|_{\Shv(C)^c}=b^!\circ f_{\tri}|_{\Shv(C)^c}.
  \]
  In particular, morphism \form{bcren} is automatically an isomorphism, if $a^!(\Shv(C)^c)\subseteq \Shv(A)^c$.

\smallskip

(d)' By construction, morphism \form{bcren} can be characterized as the unique morphism making the following diagram homotopy commutative:
\[
\CD
g_{\tri}\circ a^! @>\form{bcren}>>  b^!\circ f_{\tri}\\
@V\on{can}_g VV @VV\on{can}_{f}V\\
g_{*}\circ a^! @>\text{base change}>\sim>  b^!\circ f_{*}.
\endCD
\]

\smallskip

(e) For every Artin stack $X$, we denote by $p_X:X\to\pt:=\Spec k$ the projection, and write $\Gm(X,-):\Shv(X)\to \Vect$ instead of $(p_X)_*$ and
$\Gm_{\tri}(X,-):\Shv(X)\to\Vect$ instead of $(p_X)_{\tri}$. By part (b), we have a canonical morphism
$\Gm_{\tri}(X,-)\to \Gm(X,-)$ of functors $\Shv(X)\to\Vect$.
\end{Emp}

\begin{Emp} \label{E:safe}
{\bf Safe morphisms.}  Let $f:X\to Y$ be a morphism of Artin stacks.

\smallskip

(a) To every geometric point $x$ of $X$, one associates the automorphism group
\[
\Aut_f(x):=\Aut_{f^{-1}(f(x))}(x)
\]
of $x$, viewed as a point of the Artin stack $f^{-1}(f(x))$.

\smallskip

(b) Following  \cite[Definition~10.2.2]{DG}, we say that a morphism $f$ is {\em safe} if for every geometric point $x$ of $X$, the connected component of the reduced part of $\Aut_f(x)$ is unipotent. We will say that $X$ is {\em safe} if and only if the projection $X\to\pt$ is safe.

\smallskip

(c) Let $X\overset{f}{\to}Y\overset{g}{\to}Z$ be a pair of morphisms of Artin stacks such that $g$ is safe. Then morphism $f$ is safe if and only if $g\circ f$ is safe.

\smallskip

(d) By \rp{safe} below, a morphism $f$ is safe if and only if the functor $f_*$ is continuous. Therefore it follows from Section~\re{ren}(b) that
this happens if and only if the morphism of functors $\on{can}_f:f_{\tri}\to f_*$ is an isomorphism.

\smallskip

(e) Notice that every representable morphism is safe, and every geometric Frobenius morphism $\Fr:X\to X$ is safe.

\smallskip

(f) Note that if $f$ is a separated morphism, then all automorphism groups $\Aut_f(x)$ are proper. Therefore a separated morphism $f$ is
safe if and only if all automorphism groups $\Aut_f(x)$ are finite. For example, a separated morphism between Artin stacks with affine diagonals is automatically safe.


%
%
\end{Emp}

The following assertion is an analog of  \cite[Theorem~10.2.4 and Corollary~10.2.7]{DG}. For completeness, we provide its proof in Appendix \ref{S:pfsafe}.

\begin{Prop} \label{P:safe}

\noindent{(a)} The following properties of an Artin stack $X$ are equivalent:

\smallskip

(i) $X$ is safe;

(ii) the constant sheaf $\qlbar\in\Shv(X)$ is compact;

(iii) every constructible sheaf $\A\in\Shv(X)^{\constr}$ is compact.

\smallskip

\noindent{(b)} The following properties of a morphism $f:X\to Y$ of Artin stacks are equivalent:

\smallskip

(i) $f$ is safe;

(ii) the functor $f_*:\Shv(X)\to\Shv(Y)$ is continuous;

(iii) the functor $f_!:\Shv(X)\to\Shv(Y)$ satisfies
$f_!(\Shv(X))^{\constr})\subseteq\Shv(X)^{\constr}$.
\end{Prop}

The proof of the following result will be given in Appendix \ref{S:pfsafe} as well.

\begin{Cor} \label{C:safe}
For every proper safe morphism $f:X\to Y$  between Artin stacks, we have a natural isomorphism $f_!\simeq f_*$ of functors
$\Shv(X)\to \Shv(Y)$.
\end{Cor}

\begin{Emp} \label{E:ren and unren}
{\bf Renormalized category of sheaves.}

\smallskip

(a) Let $\Shv(X)^{\ren}:=\Ind\Shv(X)^{\constr}$ be the ind-completion of $\Shv(X)^{\constr}$. Notice that we have a pair of continuous adjoint functors $(\ren_X,\unren_X)$, where
\[
\ren_X:\Shv(X)\to\Shv(X)^{\ren}
\]
be the ind-completion of the inclusion $\Shv(X)^c\hra\Shv(X)^{\constr}$, and
\[
\unren_X:\Shv(X)^{\ren}\to\Shv(X)
\]
is characterised by the condition that
$\unren_X|_{\Shv(X)^{\constr}}$ is the inclusion $\Shv(X)^{\constr}\hra\Shv(X)$.

\smallskip

(b) As it is explained in \cite[Section~F.5.2]{main}, both categories $\Shv(X)$ and $\Shv(X)^{\ren}$ are equipped with perverse $t$-structures, and functor $\unren_X$ induces an equivalence
\[
(\Shv(X)^{\ren})^{\geq -n}\to\Shv(X)^{\geq -n}
\]
for all $n$.
\end{Emp}

\begin{Emp} \label{E:remsafe}
{\bf Remark.} Notice that the functor $\ren_X:\Shv(X)\to\Shv(X)^{\ren}$ is an equivalence of categories if and only if
the inclusion $\Shv(X)^c\hra\Shv(X)^{\constr}$ is an equivalence. Thus, by \rp{safe}(a), this happens if and only if $X$ is safe.
\end{Emp}

\begin{Emp} \label{E:functren}
{\bf Functors between renormalized categories.} Let $f:X\to Y$ be a morphism of Artin stacks.

\smallskip

(a) Note that both pullbacks $f^*,f^!:\Shv(Y)\to\Shv(X)$ map $\Shv(Y)^{\constr}$ to $\Shv(X)^{\constr}$, thus give rise to
unique continuous functors
\[
(f^*)^{\ren},(f^!)^{\ren}:\Shv(Y)^{\ren}\to\Shv(X)^{\ren},
\]
 extending
\[
f^*|_{\Shv(Y)^{\constr}},f^!|_{\Shv(Y)^{\constr}}:\Shv(Y)^{\constr}\to\Shv(X)^{\constr}.
\]
To simplify the notation, we will denote functors $(f^*)^{\ren}$ and $(f^!)^{\ren}$ by $f^*$ and $f^!$, respectively.

\smallskip

(b) By construction, the functor $f^*:\Shv(Y)^{\ren}\to\Shv(X)^{\ren}$ from part~(a) maps compact objects to compact objects, thus has a continuous right adjoint $f_*:\Shv(X)^{\ren}\to\Shv(Y)^{\ren}$.

\smallskip

(c) Note that the functor $f^!:\Shv(Y)^{\ren}\to\Shv(X)^{\ren}$ from part~(a) has a left adjoint $(f_!)^{\ren}$ if and only if functor
$f_!:\Shv(X)\to\Shv(Y)$ preserves constructible objects. Using \rp{safe}(b), this happens if and only if $f$ is safe. In this case, $(f_!)^{\ren}$ is equal to the unique continuous extension of
$f_!|_{\Shv(X)^{\constr}}:\Shv(X)^{\constr}\to \Shv(Y)^{\constr}$ and will be denoted simply by $f_!$.

\end{Emp}

\begin{Lem} \label{L:functren}
Let $f:X\to Y$ be a morphism of Artin stacks.

\smallskip

(a) We have natural isomorphisms
 \[
 f^*\circ\unren_Y\simeq  \unren_X\circ f^*\text{ and  }f^!\circ\unren_Y\simeq\unren_X\circ f^!
 \]
of functors $\Shv(Y)^{\ren}\to\Shv(X)$.

\smallskip

(b) We have a natural morphism
\[
\unren_Y\circ f_* \to f_*\circ \unren_X
\]
of functors $\Shv(X)^{\ren}\to\Shv(Y)$, whose restriction to $\Shv(X)^{\constr}$ is an isomorphism.

\smallskip

(c) We have a natural isomorphism
 \[
 f_{\tri}\simeq \unren_Y\circ f_*\circ \ren_X
 \]
of functors $\Shv(X)\to\Shv(Y)$.

\smallskip

(d) We have natural morphisms
\[
\ren_X\circ f^*\to f^*\circ\ren_Y, \ren_X\circ f^!\to f^!\circ\ren_Y,
 \text{ (resp. }\ren_Y\circ f_{\tri}\to f_*\circ\ren_X)
 \]
 of functors $\Shv(Y)\to\Shv(X)^{\ren}$ (resp.  $\Shv(X)\to\Shv(Y)^{\ren}$).
 \end{Lem}

 \begin{proof}
 (a) By continuity, it suffices to show isomorphisms between the corresponding functors
 $\Shv(Y)^{\constr}\to\Shv(X)$, which follow immediately from definitions.

\smallskip

 (b) The morphism $\unren_Y\circ f_* \to f_*\circ \unren_X$ is obtained by adjunction from the (iso)morphism
 $f^*\circ \unren_Y\to \unren_X\circ f^*$ from part (a). To show the isomorphism assertion, we have to show that for every
  $\A\in\Shv(X)^{\constr}$ and $\mathcal{B}\in \Shv(Y)^c$ the natural morphism
 \[
\CHom_{\Shv(Y)}(\mathcal{B},(\unren_Y\circ f_*)(\A))\to \CHom_{\Shv(Y)}(\mathcal{B},  (f_*\circ\unren_X)(\A))
\]
is an isomorphism. By adjunction, the above morphism is the composition of isomorphisms
 \[
\CHom_{\Shv(Y)}((f^*\circ\ren_Y)(\mathcal{B}),\A)\overset{\unren_X}{\simeq}\CHom_{\Shv(X)}((\unren_X\circ f^*\circ \ren_Y)(\mathcal{B}),\unren_X(\A)) \simeq\]
\[ \simeq\CHom_{\Shv(X)}(f^*(\mathcal{B}),\unren_X(\A)),
\]
where the first map is isomorphism because $\unren_X|_{\Shv(X)^{\constr}}$ is fully faithful, and the second isomorphism
is induced by the isomorphism
\[
\unren_X\circ f^*\circ \ren_Y\overset{(a)}{\simeq} f^*\circ\unren_Y\circ\ren_Y\overset{\on{unit}}{\simeq} f^*,
\]
where we recall that functor $\ren_Y$ is fully faithful.

\smallskip

(c)  By continuity, it suffices to construct an isomorphism between the corresponding functors $\Shv(X)^{c}\to\Shv(Y)$, and we define the corresponding isomorphism to be the composition
\[
\unren_Y\circ f_*\circ \ren_X|_{\Shv(X)^{c}}\overset{(b)}{\simeq}  f_*\circ\unren_X\circ \ren_X|_{\Shv(X)^{c}}\overset{\on{unit}}{\simeq}
  f_*|_{\Shv(X)^{c}}\simeq f_{\tri}|_{\Shv(X)^{c}}.
\]

\smallskip

(d) follows from parts (a),(c), adjunction and isomorphism $\unren_Y\circ\ren_Y\simeq\Id$.
\end{proof}

 \begin{Cor} \label{C:bcren}
For every Cartesian diagram of Artin stacks
\begin{equation*} 
\CD
A @>a>> C\\
@VgVV @VVfV\\
B @>b>> D,
\endCD
\end{equation*}

\smallskip

(a) we have a canonical (base change) isomorphism
\[
b^!\circ f_*\simeq g_*\circ a^!:\Shv(C)^{\ren}\to\Shv(B)^{\ren};
\]

\smallskip

(b) the following diagram of functors $\Shv(C)\to\Shv(B)^{\ren}$ is homotopy commutative
\begin{equation*} 
\begin{CD}
\ren_B\circ g_{\tri}\circ a^! @>\ref{L:functren}(d)>> g_{*}\circ\ren_A\circ a^!  @>\ref{L:functren}(d)>>
g_{*}\circ a^!\circ \ren_C\\
@V\re{ren}(d) VV @. @V\sim V(a)V\\
\ren_B\circ b^!\circ f_{\tri}  @>\ref{L:functren}(d)>> b^!\circ \ren_D\circ f_{\tri} @>\ref{L:functren}(d)>>
b^!\circ f_{*}\circ \ren_C.
\end{CD}
\end{equation*}
\end{Cor}

\begin{proof}
(a) By continuity, it suffices to construct a canonical isomorphism
\[
b^!\circ f_*(\F)\simeq g_*\circ a^!(\F)
\]
for every $\F\in\Shv(C)^{\constr}$. In this case, both $b^!\circ f_*(\F)$ and $g_*\circ a^!(\F)$ lie in $(\Shv(B)^{\ren})^{\geq -n}$ for some $n$. Hence, by Section~
 \re{ren and unren}(b), it suffices to construct an isomorphism
\[
\unren_B\circ b^!\circ f_*(\F)\simeq \unren_B\circ g_*\circ a^!(\F).
\]
Since $\F\in\Shv(C)^{\constr}$, we get $a^!(\F)\in\Shv(A)^{\constr}$. It therefore follows from \rl{functren}(a),(b) that it suffices to construct an isomorphism  $b^!\circ f_*\simeq g_*\circ a^!$ of functors $\Shv(C)^{\constr}\to\Shv(B)$, which is well-known.

\smallskip

(b) Using definitions of morphisms in \rl{functren}(d), it suffices to show that the following diagram of functors
$\Shv(C)\to\Shv(B)$ is homotopy commutative:
\begin{equation*}
\begin{CD}
g_{\tri}\circ a^! @>\ref{L:functren}(c)>\sim> \unren_B\circ g_*\circ \ren_A \circ a^! @>\ref{L:functren}(d)>>\unren_B \circ g_*\circ a^!\circ \ren_C\\
@V\re{ren}(d) VV @. @V\sim V(a)V\\
b^!\circ f_{\tri}  @>\ref{L:functren}(c)>\sim>  b^!\circ \unren_D\circ f_*\circ \ren_C @>\ref{L:functren}(a)>\sim> \unren_B\circ b^!\circ f_*\circ \ren_C.
\end{CD}
\end{equation*}
For this, it suffices to evaluate all functors on objects of $\Shv(C)^c$, in which case the assertion follows from Section~\re{ren}(d)' by unwinding definitions of all  morphisms involved.
\end{proof}

\begin{Emp} \label{E:safe1}
{\bf Application to safe morphisms.}

\smallskip

(a) Note that if $f:X\to Y$ is a safe morphism of Artin stacks, then the morphism
\begin{equation} \label{Eq:isompb}
\ren_X\circ f^*\to f^*\circ\ren_Y
\end{equation}
of functors $\Shv(Y)\to\Shv(X)^{\ren}$  from \rl{functren}(d) is an isomorphism. Indeed, since $f^*(\Shv(Y)^c)\subseteq\Shv(X)^c$ (because $f$ is safe), the restrictions of
morphism \form{isompb} to $\Shv(Y)^c$ is the identity endomorphism of the composition $\Shv(Y)^c\overset{f^*}{\lra}\Shv(X)^{\constr}\subseteq \Shv(X)^{\ren}$.

\smallskip

(b) For every commutative diagram of Artin stacks
\begin{equation*} 
\CD
A @>a>> C\\
@VgVV @VVfV\\
B @>b>> D,
\endCD
\end{equation*}
such that $g$ is safe we have a canonical morphism
\begin{equation*} 
f^*\circ b_{\tri}\to a_{\tri}\circ g^*
\end{equation*}
of functors $\Shv(B)\to\Shv(C)$, defined to be as a composition
\[
f^*\circ b_{\tri}\overset{\ref{L:functren}(c)}{\simeq} f^*\circ \unren_D\circ  b_*\circ \ren_B
\overset{\ref{L:functren}(a)}{\simeq}\unren_C\circ f^*\circ  b_*\circ \ren_B
\overset{\text{base change}}{\lra}
\]
\[
\unren_C\circ a_*\circ  g^*\circ \ren_B\overset{\form{isompb}}{\simeq} \unren_C\circ a_*\circ\ren_A \circ  g^*
\overset{\ref{L:functren}(c)}{\simeq} a_{\tri}\circ  g^*.
\]

\smallskip

(b)' Unwinding definitions, morphism of part~(b) can be characterized as the unique morphism making the following diagram homotopy commutative:
\[
\CD
f^*\circ b_{\tri} @>(b)>> a_{\tri}\circ g^*\\
@V\on{can}_b VV @VV\on{can}_{a}V\\
f^*\circ b_{*} @>\text{base change}>> a_{*}\circ g^*.
\endCD
\]

\end{Emp}

\begin{Emp} \label{E:dualizing}
{\bf The dualizing sheaf.} We denote by $\om^{\ren}_X\in \Shv(X)^{\ren}$ the image of the dualizing sheaf $\om_X\in\Shv(X)^{\constr}$ under the
embedding $\Shv(X)^{\constr}\hra \Shv(X)^{\ren}$. Then it follows from \rl{functren}(b) that we have a canonical isomorphism
\[
\Gm(X,\om_X):=(p_X)_*(\om_X)\simeq(p_X)_*(\om^{\ren}_X)=:\Gm(X,\om^{\ren}_X)
\]
between objects of
\[
\Shv(\pt)\simeq\Vect\simeq\Shv(\pt)^{\ren}.
\]
\end{Emp}

\section{Generalized base change morphisms}

\begin{Emp}
{\bf Set-up.} Note that every commutative diagram of Artin stacks
\begin{equation} \label{Eq:basic}
\CD
A @>a>> C\\
@VgVV @VVfV\\
B @>b>> D
\endCD
\end{equation}
decomposes as
\begin{equation} \label{Eq:basic2}
\CD
A @>p>> B\times_D C @>\wt{b}>> C\\
@VgVV @V\wt{f}VV @VVfV\\
B @=B @>b>> D.
\endCD
\end{equation}
\end{Emp}

\begin{Emp} \label{E:pushable}
{\bf Pushable squares.}

\smallskip

(a) Motivated by \cite[Definition~3.1.1]{FYZ}, we call a commutative diagram \form{basic} {\em pushable}, if the morphism $p$ from diagram \form{basic2} is proper and safe.

\smallskip

(b) In the situation of part~(a), we have a canonical morphism
\begin{equation*} 
f_!\circ a_{\tri}\to b_{\tri}\circ g_!
\end{equation*}
of functors $\Shv(A)\to\Shv(D)$, defined to be the composition
\[
f_!\circ a_{\tri}\simeq f_!\circ \wt{b}_{\tri}\circ p_{\tri}\to b_{\tri}\circ \wt{f}_!\circ p_{\tri}\simeq b_{\tri}\circ g_!,
\]
where

\smallskip

\quad\quad$\bullet$ the first morphism is induced by the inverse of the isomorphism
\[
\wt{b}_{\tri}\circ p_{\tri}\isom (\wt{b}\circ p)_{\tri}=a_{\tri},
\]
from Section~\re{ren}(c), which is an isomorphism because $p$ is proper and safe that thus by \rco{safe} we have
\[
p_*(\Shv(A)^c)=p_!(\Shv(A)^c)\subseteq \Shv(B\times_D C)^c;
\]

\smallskip

\quad\quad $\bullet$ the second morphism is induced by the base change morphism
\[
 g_!\circ\wt{b}_{\tri}\to b_{\tri}\circ \wt{f}_!,
\]
obtained by adjunction from the composition
\[
\wt{b}_{\tri}\overset{\on{unit}}{\lra}\wt{b}_{\tri}\circ \wt{f}^!\circ \wt{f}_!\overset{\re{ren}(d)}{\lra} f^!\circ b_{\tri}\circ \wt{f}_!,
\]

\smallskip

\quad\quad $\bullet$ the last morphism is induced by the isomorphism
\[
\wt{f}_!\circ p_{\tri}\overset{\re{safe}(d)}{\simeq}\wt{f}_!\circ p_{*} \overset{\ref{C:safe}}{\simeq}\wt{f}_!\circ p_!\simeq f_!.
\]

\smallskip

(c) In the situation of part~(a), assume that morphism $f$ is safe. Hence  morphism $g$ is safe as well (see Section~\re{safe}(c)).
Replacing $(-)_{\tri}$ by $(-)_*$ in part~(b) in all places, we have a canonical morphism
\begin{equation*} 
f_!\circ a_*\to b_*\circ g_!
\end{equation*}
of functors $\Shv(A)^{\ren}\to\Shv(D)^{\ren}$.

\smallskip

(d) Assume that morphism $f$ is proper and safe. Then, by Section~\re{safe}(c), a diagram \form{basic} is pushable if and only if morphism $g$ is proper and safe.

\smallskip

Moreover, it is not difficult to see that in this case, the morphism of part~(b) decomposes as
\[
f_!\circ a_{\tri}\overset{\re{safe}(d)\circ\ref{C:safe}}{\simeq} f_{\tri}\circ a_{\tri}\overset{\re{ren}(c)}{\lra}  (f\circ a)_{\tri}
\simeq (b\circ g)_{\tri} \overset{\re{ren}(c)}{\simeq} b_{\tri}\circ g_{\tri}\overset{\ref{C:safe}\circ\re{safe}(d)}{\simeq} b_{\tri}\circ g_!,
\]
while the morphism of part~(c) decomposes as
\[
f_!\circ a_{*}\overset{\ref{C:safe}}{\simeq} f_{*}\circ a_{*}\simeq(f\circ a)_{*}
\simeq (b\circ g)_{*}\simeq b_{*}\circ g_{*}\overset{\ref{C:safe}}{\simeq} b_{*}\circ g_!.
\]
\end{Emp}

\begin{Emp} \label{E:gysin}
{\bf Quasi-smooth morphisms and Gysin maps.}
\smallskip

(a) For an Artin stack $X$, an object $\A\in\Shv(X)$ and $n\in\B{Z}$, we set $\A\lan n\ran:=\A(n)[2n]$. More generally, for a locally constant function $\un{n}:X\to\B{Z}$, we denote by $\A\lan \un{n}\ran$ an object of $\Shv(X)$ such that for every connected component $X'\subseteq  X$, we have $\A\lan \un{n}\ran|_{X'}=\A\lan\un{n}|_{X'}\ran$.

\smallskip

(b) To every Artin stack $X$ we associate the dimension function
\[
\un{\dim}_X:X\to\B{Z},
\]
given by the formula $\un{\dim}_X(x)=\dim_x(X)$ for every $x\in X$.

To every morphism $f:X\to Y$ of Artin stacks, we associate the dimension function
$$\un{\dim}_f:X\to\B{Z},$$ given by the formula $\un{\dim}_f=\un{\dim}_X-f^{\cdot}(\un{\dim}_Y)$,
where $f^{\cdot}$ denotes pullback of functions.

For every composition $X\overset{f}{\to}Y\overset{g}{\to}Z$, we have an equality
\[
\un{\dim}_{g\circ f}=\un{\dim}_f+f^{\cdot}(\un{\dim}_g).
\]
In particular, $f$ is of relative dimension zero, that is, $\un{\dim}_f=0$, if and only if $\un{\dim}_{g\circ f}=f^{\cdot}(\un{\dim}_g)$.

\smallskip

(c) Note that if $f:X\to Y$ is quasi-smooth (also called lci), then the cotangent complex $T^*(X/Y)$ is perfect, and
the dimension function $\un{\dim}_f$ is locally constant and equals the Euler-characteristic of $T^*(X/Y)$.

Also for every composition $X\overset{f}{\to}Y\overset{g}{\to}Z$ of Artin stacks such that $g$ is smooth, the morphism $f$ is quasi-smooth if and only if $g\circ f$ is quasi-smooth.

\smallskip

(d) To every quasi-smooth morphism $f:X\to Y$ of Artin stacks (and more generally of derived Artin stacks) one can associate
the {\em relative fundamental class} map
\[
\on{cl}_f:f^*(\qlbar)\to f^!(\qlbar)\lan-\un{\dim}_f\rangle
\]
(see, for example, \cite[Construction~3.6]{Kh}), hence the {\em Gysin map}
\[
\on{Gys}_f:f^*\to f^!\lan-\un{\dim}_f\rangle
\]
of functors $\Shv(Y)\to \Shv(X)$, defined as a composition
\[
f^*=f^*\otimes f^*(\qlbar)\overset{\Id\otimes \on{cl}_f}{\lra} f^*\otimes f^!(\qlbar)\lan-\un{\dim}_f\rangle\overset{\on{can}}{\lra} f^!\lan-\un{\dim}_f\rangle,
\]
where $\on{can}$ is the canonical map $f^*(\CA)\otimes f^!(\CB)\to f^!(\CA\otimes\CB)$.

\smallskip

(e) By construction the map $\on{Gys}_f$ from part~(d) is a canonical isomorphism $f^*\isom f^!\lan-\un{\dim}_f\rangle$ when $f$ is smooth, and Gysin maps are compatible with compositions (by \cite[Theorem~3.12]{Kh}).

\smallskip

(f) Moreover, for every homotopy Cartesian diagram of Artin stacks
\form{basic} such that $f$ is quasi-smooth, the morphism $g$ is quasi-smooth, satisfies $\un{\dim}_g=a^{\cdot}(\un{\dim}_f)$, and the following
diagram is homotopy commutative:
\[
\begin{CD}
a^*\circ f^* @>\on{Gys}_f>> (a^*\circ f^!)\lan -a^{\cdot}\un{\dim}_f\ran)\\
@V\sim VV @VV\text{base change}V\\
g^*\circ b^* @>\on{Gys}_f>> (g^!\circ b^*)\lan -\un{\dim}_g\ran)
\end{CD}
\]
(by \cite[Theorem~3.13]{Kh}).
\smallskip

(g) Clearly, the Gysin map $\on{Gys}_f$ from part~(d) can be viewed as a morphism of functors $\Shv(Y)^{\constr}\to \Shv(X)^{\constr}$, and hence
as a morphism of functors $\Shv(Y)^{\ren}\to \Shv(X)^{\ren}$.
\end{Emp}

\begin{Emp} \label{E:pullable}
{\bf Pullable squares.}

\smallskip

(a) Slightly modifying \cite[Definition~3.1.1]{FYZ}, we call a commutative diagram \form{basic} {\em pullable}, if the morphism $p$ from diagram \form{basic2} is quasi-smooth.

\smallskip

(b) In the situation of part~(a), we have a canonical morphism
\begin{equation} \label{Eq:genbc}
g^* \circ b^!\to (a^! \circ f^*)\lan -\un{\dim}_p\ran
\end{equation}
of functors $\Shv(D)\to\Shv(A)$ and $\Shv(D)^{\ren}\to\Shv(A)^{\ren}$ defined to be the composition
\begin{equation*}
g^* \circ b^!\simeq p^*\circ \wt{f}^* \circ b^!\overset{\text{base change}}{\lra}p^*\circ \wt{b}^! \circ f^*\overset{\on{Gys}_p}{\lra} p^!\lan -\un{\dim}_p\ran\circ \wt{b}^! \circ f^*\simeq a^! \circ f^*\lan -\un{\dim}_p\ran,
\end{equation*}
where $\on{Gys}_p:p^*\to p^!\lan -\un{\dim}_p\ran$ is the Gysin map (see Section~\re{gysin}(d)).

\smallskip

(c) Assume that morphism $f$ is smooth. Then, by Sections~\re{gysin}(c), a diagram \form{basic} is pullable if and only if morphism $g$ is quasi-smooth. Moreover, using Sections~\re{gysin}(e),(b), it is not difficult to see that morphism \form{genbc} decomposes in this case as
\[
g^* \circ b^!\overset{\on{Gys}_g}{\lra} (g^!\circ b^!)\lan-\un{\dim}_g\ran\simeq (a^!\circ f^!)\lan-\un{\dim}_g\ran
\overset{\on{Gys}_f}{\simeq}(a^!\circ f^*)\lan-\un{\dim}_g+a^{\cdot}(\un{\dim}_f)\ran\simeq  a^! \circ f^*\lan -\un{\dim}_p\ran.
\]

\end{Emp}

\begin{Emp} \label{E:rempullable}
{\bf Remarks.} Though our notion of a pullable square is motivated by the corresponding notion of \cite[Definition~ 3.1.1]{FYZ}, the two notions are not equivalent. Namely, in \cite{FYZ} the authors consider  commutative diagram of derived Artin stacks and require that the induced map $A\to B\times^h_B C$ to the homotopy fiber product is quasi-smooth.

However, if one restricts to commutative diagrams of classical Artin stacks, then our notion is more general. Indeed, every pullable square in the sense of \cite{FYZ} is also pullable in our sense (by \rl{derived} below), but the converse is false. For example, if a commutative diagram is Cartesian, but not homotopy Cartesian, then it is pullable in our sense but not in the sense of \cite{FYZ} (by Section~\re{FYZ}(b) below).
\end{Emp}

\section{Functoriality of correspondences and traces}

\begin{Emp} \label{E:corr}
{\bf Correspondences.}

\smallskip

(a) By a {\em correspondence} $c$ on $X$, we will mean a morphism of Artin stacks
\[
c=(c_l,c_r):C\to X\times X,
\]
i.e., a diagram $X\overset{c_l}{\lla}C\overset{c_r}{\lra}X$.

\smallskip

(b) A correspondence $c:C\to X\times X$ gives rise to a Cartesian diagram
\[
\begin{CD}
\on{Fix}(c) @>\Dt_c>> C\\
@Vc_{\Dt}VV @VVcV\\
X @>\Dt_X>> X\times X,
\end{CD}
\]
where $\Dt_X:X\to X\times X$ is the diagonal morphism on $X$. We will refer to $\on{Fix}(c)$ as {\em the stack of fixed points} of $c$.

\smallskip

(c) A correspondence $c:C\to X\times X$ induces continuous functors
\[
[c]:=(c_l)_{\tri}\circ c_r^!:\Shv(X)\to\Shv(X) \text{ and } [c]:=(c_l)_{*}\circ c_r^!:\Shv(X)^{\ren}\to\Shv(X)^{\ren}.
\]
Since DG categories $\Shv(X)$ and $\Shv(X)^{\ren}$ are compactly generated thus dualizable, the trace formalism (see \cite[Section~3]{GKRV}) applies. In particular, one can associate to $c$ the vector spaces
\[
\Tr(\Shv(X),[c]), \Tr(\Shv(X)^{\ren},[c])\in\Vect,
\]
where we remind that $\Vect$ denotes the stable $\infty$-category of $\qlbar$-vector spaces.

\smallskip

(d) Furthermore, a correspondence $c:C\to X\times X$ gives rise to a lax-commutative diagram

\begin{equation} \label{Eq:ren}
\xy
(10,10)*+{\Shv(X)}="A";
(40,10)*+{\Shv(X)^{\ren}}="B";
(10,-10)*+{\Shv(X)}="C";
(40,-10)*+{\Shv(X)^{\ren},}="D";
{\ar@{->}^{\ren_X} "A";"B"};
{\ar@{->}_{\ren_X} "C";"D"};
{\ar@{->}_{[c]} "A";"C"};
{\ar@{->}^{[c]} "B";"D"};
{\ar@{=>}_{\al} "C";"B"}
\endxy
\end{equation}
where $\al$ is the composition
\[
\ren_X\circ [c]=\ren_X\circ (c_l)_{\tri}\circ c_r^!\to  (c_l)_{*}\circ\ren_C\circ c_r^!\to  (c_l)_{*}\circ c_r^!\circ \ren_X=[c]\circ\ren_X
\]
of morphisms from \rl{functren}(d).

Moreover, since functor $\ren_X$ has a continuous right adjoint (given by $\unren_X$) it induces a morphism of traces
\[
\Tr(\ren_X,[c]):\Tr(\Shv(X),[c])\to \Tr(\Shv(X)^{\ren},[c]).
\]

\end{Emp}

\begin{Emp} \label{E:chern}
{\bf Chern character.}

\vskip 4truept

Following \cite[Section~3.5.4]{GKRV}, to a correspondence $c:C\to X\times X$ and a constructible
sheaf $\A\in \Shv(X)^{\constr}$ one associates the {\em Chern character} map
\[
\on{ch}_{c,\A}:\CHom_{\Shv(X)^{\ren}}(\A,[c](\A))\to  \Tr(\Shv(X)^{\ren},[c]).
\]

Namely, every point $u\in \CHom_{\Shv(X)^{\ren}}(\A,[c](\A))$ gives rise to a lax-commutative diagram
\begin{equation} \label{Eq:chern}
\xy
(10,10)*+{\Vect}="A";
(40,10)*+{\Shv(X)^{\ren}}="B";
(10,-10)*+{\Vect}="C";
(40,-10)*+{\Shv(X)^{\ren}.}="D";
{\ar@{->}^{\A} "A";"B"};
{\ar@{->}_{\A} "C";"D"};
{\ar@{=} "A";"C"};
{\ar@{->}^{[c]} "B";"D"};
{\ar@{=>}_{u} "C";"B"}
\endxy
\end{equation}
Moreover, since $\A\in\Shv(X)^{\constr}$ is a compact object in $\Shv(X)^{\ren}$, the corresponding morphism $\A:\Vect\to \Shv(X)^{\ren}$ has a right adjoint. Thus
diagram \form{chern} induces a morphism of traces
\[
\qlbar=\Tr(\Vect,\Id)\to \Tr(\Shv(X)^{\ren},[c]),
\]
hence defines a point of $\Tr(\Shv(X)^{\ren},[c])$.
\end{Emp}

\begin{Emp} \label{E:morphcorr}
{\bf Morphisms of correspondences.}
Let $c:C\to X\times X$ and $d:D\to Y\times Y$ be correspondences.

By a {\em morphism of correspondences} $c\to d$, we mean a pair of morphisms
$[f]=(f,g)$, making the following diagram commute:
\begin{equation} \label{Eq:morph}
\begin{CD}
X @<c_l<< C @>c_r>> X \\
@VfVV  @VgVV @VVfV\\
Y @<d_l<< D @>d_r>> Y.
\end{CD}
\end{equation}
%
%
%
\end{Emp}

\begin{Emp} \label{E:pf}
{\bf Pushforward.} Let $[f]:c\to d$ be a morphism of correspondences (see Section~\re{morphcorr}) such that the left inner square of diagram~\form{morph} is pushable (see Section~\re{pushable}).

\smallskip

Notice that this condition is satisfied if either

\smallskip

\quad\quad (i) morphisms $f$ and $g$ are proper and safe (see Section~\re{pushable}(d))

or

\quad\quad (ii) the left inner square of diagram~\form{morph} is Cartesian.

\smallskip

\smallskip

(a) In this case we have a natural morphism
\[
[f]_!:f_! \circ [c]\to [d]\circ f_!
\]
of functors $\Shv(X)\to\Shv(Y)$, defined as a composition
\[
f_! \circ [c] =f_!\circ (c_l)_{\tri} \circ c_r^!\to (d_l)_{\tri} \circ g_! \circ c_r^!\to (d_l)_{\tri} \circ d_r^! \circ f_!=[d]\circ f_!,
\]
where

\smallskip

$\bullet$ the first morphism is induced by a canonical morphism of functors
\begin{equation} \label{Eq:BC1}
f_!\circ (c_l)_{\tri}\to (d_l)_{\tri}\circ g_!
\end{equation}
from Section~\re{pushable}(b), corresponding to the left inner square of diagram~\form{morph};

\smallskip

$\bullet$ the second morphism is induced by the base change morphism  $g_! \circ c_r^!\to d_r^! \circ f_!$,
corresponding to the right inner square of diagram~\form{morph}.

\smallskip

Moreover, since functor $f_!:\Shv(X)\to\Shv(Y)$ has a continuous right adjoint (given by $f^!$), the morphism $[f]_!$ induces a map of traces
\[
\Tr([f]_!):\Tr(\Shv(X),[c])\to \Tr(\Shv(Y),[d]).
\]

\smallskip

 (b) Assume in addition that morphism $f$ is safe. Then by a version of the arguments of part (a) (which are much simpler now), we see that $[f]$ induces a canonical morphism
\[
[f]_!:f_! \circ [c]\to [d]\circ f_!
\]
of functors $\Shv(X)^{\ren}\to\Shv(Y)^{\ren}$, hence map of traces
\[
\Tr([f]_!):\Tr(\Shv(X)^{\ren},[c])\to \Tr(\Shv(Y)^{\ren},[d]).
\]

\smallskip

(c) In the situation of part (b), for every $\A\in\Shv(X)^{\constr}$ we have $f_!(\A)\in \Shv(Y)^{\constr}$ (by \rp{safe}(b)).
Next, the morphism $[f]_!$ of part~(b) gives rise to a map
\[
\CHom_{\Shv(X)^{\ren}}(\A,[c](\A))\overset{f_!}{\lra}\CHom_{\Shv(Y)^{\ren}}(f_!(\A),f_!([c](\A))) \overset{[f]_!}{\lra}
\CHom_{\Shv(Y)^{\ren}}(f_!(\A),[d](f_!(\A))),
\]
which we denote again by $[f]_!$.

\smallskip

Moreover, unwinding definitions, it follows from compatibility of trace maps with compositions that the following diagram is homotopy commutative:
\begin{equation} \label{Eq:chernpf}
\begin{CD}
\CHom_{\Shv(X)^{\ren}}(\A,[c](\A))@>\on{ch}_{c,\A}>> \Tr(\Shv(X)^{\ren},[c])\\
@V[f]_! VV @VV\Tr([f]_!)V\\
\CHom_{\Shv(X)^{\ren}}(f_!(\A),[d](f_!(\A)))@>\on{ch}_{d,f_!(\A)}>> \Tr(\Shv(Y)^{\ren},[d]).
\end{CD}
\end{equation}
\end{Emp}

The following result, whose proof will be given in \rs{proofs1}, asserts that morphisms from Sections~\re{pf}(a) and \re{pf}(b) are compatible with renormalization functors.

\begin{Prop} \label{P:compatpf}
In the situation of Section~\re{pf}(b), we have a homotopy commutative diagram
\begin{equation} \label{Eq:tracespf}
\begin{CD}
\Tr(\Shv(X),[c])@>\Tr([f]_!)>> \Tr(\Shv(Y),[d])\\
@V\Tr(\ren_X,[c]) VV @VV\Tr(\ren_Y,[d]) V \\
\Tr(\Shv(X)^{\ren},[c])@>\Tr([f]_!)>> \Tr(\Shv(Y)^{\ren},[d]).
\end{CD}
\end{equation}
\end{Prop}

\begin{Emp} \label{E:pb}
{\bf Pullback.} Let $[f]:c\to d$ be a morphism of correspondences (see Section~\re{morphcorr})
such that such that the right inner square of diagram~\form{morph} is pullable in the sense of Section~\re{pullable}.

\smallskip

Notice that this condition is satisfied if either

\smallskip

\quad (i) $f$ is smooth, $g$ is quasi-smooth such that $\un{\dim}_g=c_r^{\cdot}(\un{\dim}_f)$ (see Section~\re{pullable}(c))

or

\quad (ii) the right inner square of diagram~\form{morph} is Cartesian.

\smallskip

(a) Assume in addition that morphisms $f$ and $g$ are safe. In this case we have a natural morphism
\[
[f]^*:f^*\circ [d] \to [c]\circ f^*
\]
of functors $\Shv(Y)\to \Shv(X)$, defined as a composition
\[
f^*\circ [d] =f^*\circ (d_l)_{\tri}\circ d_r^!\to (c_l)_{\tri} \circ g^* \circ d_r^!\to (c_l)_{\tri}\circ c_r^! \circ f^*=[c]\circ f^*,
\]
where

%


\smallskip

$\bullet$ the first morphism is induced by the morphism
\begin{equation} \label{Eq:BC2}
f^*\circ (d_l)_{\tri}\to (c_l)_{\tri}\circ g^*
\end{equation}
(see Section~\re{safe1}(b)), which is defined because $g$ is safe;


\smallskip

$\bullet$ the second morphism is induced by a canonical morphism
\begin{equation} \label{Eq:bc}
g^* \circ d_r^!\to c_r^! \circ f^*,
\end{equation}
from Section~\re{pullable}(b).

\smallskip

Moreover, since functor $f^*$ has a continuous right adjoint (because $f$ is safe), morphism $[f]^*$ induces a map of traces
\[
\Tr([f]^*):\Tr(\Shv(Y),[d])\to\Tr(\Shv(X),[c]).
\]

\smallskip

(b) Even without the assumptions that morphisms $f$ and $g$ are safe one can show (slightly modifying the arguments of part (a))
that morphism $[f]$ induces a canonical morphism
\[
[f]^*:f^*\circ [d] \to [c]\circ f^*
\]
of functors $\Shv(Y)^{\ren}\to \Shv(X)^{\ren}$, hence a map of traces
\[
\Tr([f]^*):\Tr(\Shv(Y)^{\ren},[d])\to\Tr(\Shv(X)^{\ren},[c]).
\]

\smallskip

(c) In the situation of part (b), for every $\A\in\Shv(Y)^{\constr}$ we have $f^*(\A)\in \Shv(Y)^{\constr}$.
Then the morphism $[f]^*$ of part (b) gives rise to a map
\[
\CHom_{\Shv(Y)^{\ren}}(\A,[d](\A))\overset{f^*}{\lra}\CHom_{\Shv(X)^{\ren}}(f^*(\A),f^*([d](\A)))\overset{[f]^*}{\lra}
\CHom_{\Shv(Y)^{\ren}}(f^*(\A),[c](f^*(\A))),
\]
which we denote again by $[f]^*$.

\smallskip

Moreover, unwinding definitions, it follows from compatibility of trace maps with compositions that the following diagram is homotopy commutative
\begin{equation} \label{Eq:chernpb}
\begin{CD}
\CHom_{\Shv(Y)^{\ren}}(\A,[d](\A))@>\on{ch}_{d,\A}>> \Tr(\Shv(Y)^{\ren},[d])\\
@V[f]^* VV @VV\Tr([f]^*)V\\
\CHom_{\Shv(X)^{\ren}}(f^*(\A),[c](f^*(\A)))@>\on{ch}_{c,f^*(\A)}>> \Tr(\Shv(X)^{\ren},[c]).
\end{CD}
\end{equation}

\end{Emp}

The following result, whose proof will be given in \rs{proofs1}, asserts that morphisms  from Sections~\re{pb}(a) and \re{pb}(b) are compatible with functors $\ren_X$ and $\ren_Y$.

\begin{Prop} \label{P:compatpb}
In the situation of Section~\re{pb}(a), we have a homotopy commutative diagram:
\begin{equation} \label{Eq:tracespb}
\begin{CD}
\Tr(\Shv(Y),[d])@>\Tr([f]^*)>> \Tr(\Shv(X),[c])\\
@V\Tr(\ren_Y,[d]) VV @VV\Tr(\ren_X,[c]) V \\
\Tr(\Shv(Y)^{\ren},[d])@>\Tr([f]^*)>> \Tr(\Shv(X)^{\ren},[c]).
\end{CD}
\end{equation}
\end{Prop}

\begin{Emp} \label{E:restr}
{\bf Restriction to open and closed substacks.}
Let $c:C\to X\times X$ be a correspondence, $Z\subseteq X$  a closed substack and $U:=X\sm Z\subseteq X$ the complementary open substack.

\smallskip

(a) We denote by $c|_Z:c^{-1}(Z\times Z)\to Z\times Z$ and $c|_U:c^{-1}(U\times U)\to U\times U$
the restrictions of $c$ to $Z$ and $U$, respectively. Then the pair of inclusions $[i_c]=(Z\hra X,c^{-1}(Z\times Z)\hra C)$ defines a morphism of correspondences
$c|_Z\to c$, and $[j_c]=(U\hra X,c^{-1}(U\times U)\hra C)$ defines a morphism of correspondences
$c|_U\to c$.

\smallskip

(b) Note that $[i_c]$ satisfies assumptions of Section~\re{pf}(b), while $[j_c]$ satisfies assumptions of Section~\re{pb}(a). Therefore we have maps of traces
\[
\Tr([i_c]_!):\Tr(\Shv(Z),[c|_Z])\to\Tr(\Shv(X),[c]),\quad\Tr(\Shv(Z)^{\ren},[c|_Z])\to\Tr(\Shv(X)^{\ren},[c])
\]
and
\[
\Tr([j_c]^*):\Tr(\Shv(X),[c])\to\Tr(\Shv(U),[c|_U]),\quad\Tr(\Shv(X)^{\ren},[c])\to\Tr(\Shv(U)^{\ren},[c|_U]).
\]

(c) We say that a closed substack $Z\subseteq X$ is {\em $c$-invariant}, if we have an equality of schematic preimages $c_r^{-1}(Z)=c^{-1}(Z\times Z)$, that is, an inclusion $c_r^{-1}(Z)\subseteq c_l^{-1}(Z)$.  In this case, morphism $[i_c]$ satisfies assumptions of Section~\re{pb}(a), thus it induces maps of traces
\[
\Tr([i_c]^*):\Tr(\Shv(X),[c])\to\Tr(\Shv(Z),[c|_Z]),\quad \Tr(\Shv(X)^{\ren},[c])\to\Tr(\Shv(Z)^{\ren},[c|_Z]) .
\]

(d) In the situation of part (c), the open substack $U:=X\sm Z\subseteq X$ satisfies $c_l^{-1}(U)=c^{-1}(U\times U)$.
In this case, morphism $[j_c]$ satisfies assumptions of Section~\re{pf}(b), thus it induces maps of traces
\[
\Tr([j_c]_!):\Tr(\Shv(U),[c|_U])\to\Tr(\Shv(X),[c]),\quad \Tr(\Shv(U)^{\ren},[c|_U])\to\Tr(\Shv(X)^{\ren},[c]).
\]

(e) By a straightforward verification, in the situation of part~(c) isomorphisms $i^*\circ i_!\simeq\Id_Z$ and  $j^*\circ j_!\simeq \Id_U$
induce canonical homotopies
\[
\Tr([i_c]^*)\circ \Tr([i_c]_!)\simeq\Id\text{ and  }\Tr([j_c]^*)\circ \Tr([j_c]_!)\simeq \Id.
\]
\end{Emp}

\begin{Lem} \label{L:additivity}
 Let $Z\subseteq X$ be a closed $c$-invariant substack, and $U:=X\sm Z\subseteq X$.
 Then the map
\[
\Tr([j_c]_{!})\oplus \Tr([i_c]_{!}):\Tr(\Shv(U),c|_U)\oplus\Tr(\Shv(Z),c|_Z)\to \Tr(\Shv(X),c)
\]
is an isomorphism, whose inverse map is  $(\Tr([j_c]^*),\Tr([i_c]^*))$, and similarly for $\Shv(-)^{\ren}$.
 \end{Lem}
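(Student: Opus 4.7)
The strategy is to verify directly that $\Phi := \Tr([j_c]_!)\oplus \Tr([i_c]_!)$ and $\Psi := (\Tr([j_c]^*), \Tr([i_c]^*))$ are mutually inverse.

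First, for $\Psi \circ \Phi$, written as a $2\times 2$ block matrix: the two diagonal entries $\Tr([j_c]^*)\circ\Tr([j_c]_!)$ and $\Tr([i_c]^*)\circ\Tr([i_c]_!)$ are both homotopic to the identity by \re{restr}(e). The two off-diagonal entries $\Tr([i_c]^*)\circ\Tr([j_c]_!)$ and $\Tr([j_c]^*)\circ\Tr([i_c]_!)$ are induced by natural transformations built respectively from $i^* \circ j_!$ and $j^* \circ i_!$, both of which vanish since $U$ and $Z$ are disjoint. Hence $\Psi \circ \Phi \simeq \Id$.

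Second, for $\Phi \circ \Psi \simeq \Id_{\Tr(\Shv(X),[c])}$, I invoke the recollement cofiber sequence of endofunctors on $\Shv(X)$,
\[
j_! j^* \to \Id_{\Shv(X)} \to i_! i^*
\]
(where $i_! = i_*$ since $i$ is a closed embedding). Post-composing with $[c]$ produces a fiber sequence of endofunctors of $\Shv(X)$, all of which have continuous right adjoints. Applying the trace functor, which sends fiber sequences of such endofunctors of a dualizable category to fiber sequences in $\Vect$, yields a fiber sequence
\[
\Tr(\Shv(X), [c]\circ j_! j^*) \to \Tr(\Shv(X),[c]) \to \Tr(\Shv(X), [c]\circ i_! i^*).
\]
By the cyclicity of trace together with the isomorphisms $j^*[c]j_! \simeq [c|_U]$ and $i^*[c]i_! \simeq [c|_Z]$ coming from the $c$-invariance of $Z$, the outer terms identify canonically with $\Tr(\Shv(U),[c|_U])$ and $\Tr(\Shv(Z),[c|_Z])$; moreover, under these identifications the maps in the sequence become precisely $\Tr([j_c]_!)$ and $\Tr([i_c]^*)$. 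Combined with the sections $\Tr([i_c]_!)$ and retractions $\Tr([j_c]^*)$ supplied by \re{restr}(e), the fiber sequence canonically splits, yielding the desired homotopy $\Tr([j_c]_!)\circ\Tr([j_c]^*) + \Tr([i_c]_!)\circ\Tr([i_c]^*) \simeq \Id$.

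The main obstacle lies in the second step: carefully identifying the connecting maps in the fiber sequence produced by the additivity of the trace with the specific maps $\Tr([j_c]_!)$ and $\Tr([i_c]^*)$ defined via the morphism-of-correspondence formalism of \re{pf}--\re{pb}. This requires unwinding the naturality of the cyclic symmetry of the trace and checking compatibilities at the level of natural transformations of endofunctors, rather than merely at the level of the endofunctors themselves.
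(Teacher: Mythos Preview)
Your proposal is correct and follows essentially the same approach as the paper. The paper's proof is organized slightly differently: it observes that, given \re{restr}(e), it suffices to show that
\[
\Tr(\Shv(U),[c|_U])\overset{\Tr([j_c]_!)}{\lra}\Tr(\Shv(X),[c])\overset{\Tr([i_c]^*)}{\lra}\Tr(\Shv(Z),[c|_Z])
\]
is a fiber sequence, and then establishes this by exactly the method you describe---unwinding the definitions of $\Tr([j_c]_!)$ and $\Tr([i_c]^*)$ via the trace formalism of \cite[3.2]{GKRV} to identify them with the maps induced by the counit of $j_!j^*\to\Id$ and the unit of $\Id\to i_!i^*$, respectively. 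What you flag as the ``main obstacle'' is precisely the content of the paper's argument, and the paper carries it out by writing each trace map as a chain of cyclicity isomorphisms followed by the relevant (co)unit. Your additional explicit check of the off-diagonal vanishing in $\Psi\circ\Phi$ is not spelled out in the paper but is implicit (and immediate from $i^*j_!\simeq 0$, $j^*i_!\simeq 0$).
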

 \begin{proof}
Using the observations in Section~\re{restr}(e), it suffices to show that
\begin{equation} \label{Eq:fibered}
\Tr(\Shv(U),[c|_U])\overset{\Tr([j_c]_!)}{\lra}\Tr(\Shv(X),[c])\overset{\Tr([i_c]^*)}{\lra}\Tr(\Shv(Z),[c|_Z])
\end{equation}
is a fiber sequence in $\Vect$. By definition (see \cite[Section~3.2]{GKRV}), $\Tr([j_c]_!)$ is the composition
\[
\Tr(\Shv(U),[c|_U])\simeq\Tr(\Shv(U),j^*\circ j_!\circ [c|_U])\simeq \Tr(\Shv(X),j_!\circ [c|_U] \circ j^*)\simeq
\]
\[
\simeq \Tr(\Shv(X),j_!\circ j^*\circ [c])\overset{\on{counit}}{\lra} \Tr(\Shv(X),[c]),
\]
and similarly $\Tr([i_c]^*)$ is the composition
\[
\Tr(\Shv(X),[c])\overset{\unit}{\lra}\Tr(\Shv(X),i_!\circ i^*\circ [c])\simeq \Tr(\Shv(Z),i^*\circ [c] \circ i_!)\simeq
\]
\[
\simeq \Tr(\Shv(Z),i^*\circ i_!\circ [c|_Z])\simeq \Tr(\Shv(Z),[c]).
\]
Therefore sequence \form{fibered} is isomorphic to the sequence
\begin{equation} \label{Eq:fibered2}
\Tr(\Shv(X),j_!\circ j^*\circ [c])\overset{\on{counit}}{\lra}\Tr(\Shv(X),[c])\overset{\unit}{\lra}\Tr(\Shv(X),i_!\circ i^*\circ [c]),
\end{equation}
induced by the fiber sequence $j_!\circ j^*\to\Id\to i_!\circ i^*$. Hence \form{fibered2} is also a fiber sequence. The proof of $\Shv(-)^{\ren}$ is the same.
\end{proof}

\section{True local terms maps}

\begin{Emp} \label{E:goal}
{\bf Goals of this section.}

\smallskip

(a) To a correspondence $c:C\to X\times X$ we are going to  associate a canonically defined map
\[
\LT_c^{\true}:\Tr(\Shv(X)^{\ren},[c])\to  \Gm(\on{Fix}(c), \om_{\on{Fix}(c)}),
\]
called the {\em true local terms map}. Then we are going to consider the composition
\[
\Tr(\Shv(X),[c])\overset{\Tr(\ren_X,[c])}{\lra} \Tr(\Shv(X)^{\ren},[c])\overset{\LT_c^{\true}}{\lra} \Gm(\on{Fix}(c), \om_{\on{Fix}(c)}),
\]
denote it again by $\LT_c^{\true}$ and call it the {\em true local terms map} as well.

\smallskip

(b) Assume now that Artin stacks $X$, $C$ and $\Fix(c)$ are Verdier-compatible (see Section~\re{verdier} below). In this case,  we are going to  associate a canonically defined map
\[
\LT_{c,\tri}^{\true}:\Tr(\Shv(X),[c])\to  \Gm_{\tri}(\on{Fix}(c), \om_{\on{Fix}(c)})
\]
which we call the {\em refined true local terms map} such that the following diagram is homotopy commutative:
\begin{equation} \label{Eq:refined}
\begin{CD}
\Tr(\Shv(X),[c]) @>\LT_{c,\tri}^{\true}>>  \Gm_{\tri}(\on{Fix}(c), \om_{\on{Fix}(c)})\\
@V\Tr(\ren_X,[c])VV @VV\re{ren}(e)V\\
\Tr(\Shv(X)^{\ren},[c])@>\LT_c^{\true}>>  \Gm(\on{Fix}(c), \om_{\on{Fix}(c)}).
\end{CD}
\end{equation}
\end{Emp}

\begin{Emp} \label{E:terminology}
{\bf Remark on terminology.}
Our terminology slightly differs from that of \cite{main}. Namely, in \cite{main} only $\LT_{c,\tri}^{\true}$ is considered and it is called
{\em true local terms map} there. The reason why we call the map {\em refined} is because the (usual) true local terms map $\LT_c^{\true}$ is defined for an arbitrary correspondence, while in the situation of Section~\re{goal}(b), the true local terms map $\LT_c^{\true}$ has a natural lift to a map $\LT_{c,\tri}^{\true}$.
\end{Emp}

\begin{Emp} \label{E:obs}
We start with the following few observations: Let $X$ be an Artin stack of finite presentation over $k$. \hfill

\smallskip

(a) The category $\Shv(X)^{\ren}$ is canonically self dual, and the equivalence of categories
\[
\Shv(X)^{\ren}\to(\Shv(X)^{\ren})^\vee=\Fun_{\cont}(\Shv(X)^{\ren},\Vect),
\]
is given by the formula
\[
\mathcal{B}\mapsto \CHom_{\Shv(X)^{\ren}}(\B{D}_X(\mathcal{B}),-), \quad \mathcal{B}\in\Shv(X)^{\constr},
\]
where $\B{D}_X$ denotes the Verdier duality functor $(\Shv(X)^{\constr})^{\on{op}}\to \Shv(X)^{\constr}$.

\smallskip

The above equivalence corresponds to the pairing
\[
\ev_X: \Shv(X)^{\ren}\otimes\Shv(X)^{\ren}\to\Vect:\A\otimes \mathcal{B}\mapsto \Gm(X,\A\overset{!}{\otimes}\mathcal{B}).
\]

\smallskip

(b) Since the DG category $\Shv(X)^{\ren}$ is dualizable,
for every Artin stack $Y$ the map
\[
\Shv(Y)^{\ren}\otimes(\Shv(X)^{\ren})^\vee\to\Fun_{\cont}(\Shv(X)^{\ren},\Shv(Y)^{\ren}),
\]
given by the formula $\A\otimes \F\mapsto \A\otimes (\F(-))$, is an equivalence.

\smallskip

(c) We denote by
\[
\varphi:\Shv(Y)^{\ren}\otimes\Shv(X)^{\ren}\to \Shv(Y)^{\ren}\otimes(\Shv(X)^{\ren})^{\vee}\to\Fun_{\on{cont}}(\Shv(X)^{\ren},\Shv(Y)^{\ren})
\]
the composition of the equivalences from parts (a) and (b). Explicitly, for every  $\A\in\Shv(Y)^{\constr}$ and  $\mathcal{B}\in\Shv(X)^{\constr}$, we have
\[
\varphi(\A\otimes \mathcal{B})=\A\otimes \CHom_{\Shv(X)^{\ren}}(\B{D}_X(\mathcal{B}),-).
\]

(d) Consider the functor
\[
\boxtimes: \Shv(Y)^{\ren}\otimes\Shv(X)^{\ren}\to \Shv(Y\times X)^{\ren}
\]
such that for every  $\A\in\Shv(Y)^{\constr}$ and  $\CB\in\Shv(X)^{\constr}$ we have $\boxtimes(\A\otimes \CB)= \A\boxtimes \CB$.
This functor preserves compactness, hence admits a continuous right adjoint, to be denoted $\boxtimes^R$.
\end{Emp}

\begin{Emp} \label{E:right action}

In what follows, for $\CK\in \Shv(Y\times X)^{\ren}$ we define its \emph{action by right functors} to be
\[
\CK(-):=(p_Y)_{*}(\CK\overset{!}{\otimes}p_X^!(-))):\Shv(X)^{\ren}\to \Shv(Y)^{\ren}.
\]

\end{Emp}

\begin{Lem} \label{L:rboxtimes}
The right adjoint $\boxtimes^R: \Shv(Y\times X)^{\ren}\to \Shv(Y)^{\ren}\otimes\Shv(X)^{\ren}$ is characterized by the property
that
\[
\varphi\circ \boxtimes^R:\Shv(Y\times X)^{\ren}\to \Shv(Y)^{\ren}\otimes\Shv(X)^{\ren}\to \Fun_{\cont}(\Shv(X)^{\ren},\Shv(Y)^{\ren})
\]
is the action by right functors, e.i., the map $\CK\mapsto \CK(-)$ from Section~\re{right action}.
\end{Lem}

\begin{proof}
Unwinding the definitions, we need to
show that for $\A\in \Shv(Y)^{\constr}$ and $\CB\in\Shv(X)^{\constr}$, we have a canonical identification
\begin{equation} \label{e:boxR}
\CHom_{\Shv(Y\times X)^{\ren}}(\A\boxtimes \CB,\CK)\simeq \CHom_{\Shv(Y)^{\ren}}(\A,(p_Y)_{*}(\CK\overset{!}{\otimes}p_X^!(\B{D}_X(\CB)))).
\end{equation}

Now isomorphism \eqref{e:boxR} follows from the adjunction between
\[
p_Y^*(-)\overset{*}\otimes p_X^*(\CB)\simeq -\boxtimes \CB\text{ and }(p_Y)_*(-\overset{!}{\otimes}p_X^!(\B{D}_X(\CB))).
\]
\end{proof}

\begin{Emp} Let $u^{\ren}_X=u_{\Shv(X)^{\ren}}$ be the \emph{unit} object of $\Shv(X)^{\ren}\otimes \Shv(X)^{\ren}$, i.e., the object so that the
functor $\varphi(u_X):\Shv(X)^{\ren}\to\Shv(X)^{\ren}$ is the identity, and let $\om^{\ren}_X\in\Shv(X)^{\ren}$ be the dualizing sheaf.
\end{Emp}

\begin{Cor} \label{C:rboxtimes}
We have a canonical isomorphism $u^{\ren}_X\simeq \boxtimes^R((\Dt_X)_*(\om^{\ren}_X))$.
\end{Cor}

\begin{proof}
The assertion follows from the fact that the action of $(\Dt_X)_*(\om^{\ren}_X)\in\Shv(X\times X)^{\ren}$ by right functors
is the identity functor on $\Shv(X)^{\ren}$.
\end{proof}

\begin{Emp} \label{E:truelocal}
{\bf Construction.}

\smallskip

(a) Consider a lax commutative diagram
\begin{equation} \label{Eq:true}
\xy
(0,10)*+{\Vect}="A";
(40,10)*+{\Shv(X)^{\ren}\otimes \Shv(X)^{\ren}}="B";
(90,10)*+{\Shv(X)^{\ren}\otimes \Shv(X)^{\ren}}="C";
(130,10)*+{\Vect}="D";
(0,-10)*+{\Vect}="A'";
(40,-10)*+{\Shv(X\times X)^{\ren}}="B'";
(90,-10)*+{\Shv(X\times X)^{\ren}}="C'";
(130,-10)*+{\Vect,}="D'";
{\ar@{->}^{u^{\ren}_X} "A";"B"};
{\ar@{->}_{(\Dt_X)_*(\om^{\ren}_X)} "A'";"B'"};
{\ar@{->}^{\ev_X} "C";"D"};
{\ar@{->}_{\Gm\circ\Dt_X^!} "C'";"D'"};
{\ar@{->}^{[c]\otimes\Id} "B";"C"};
{\ar@{->}_{[c\times\Id]} "B'";"C'"};
{\ar@{->}^{\on{id}} "A";"A'"};
{\ar@{->}^{\on{id}} "D";"D'"};
{\ar@{->}^{\boxtimes} "B";"B'"};
{\ar@{->}^{\boxtimes} "C";"C'"};
{\ar@{=>}_{\al_l} "B";"A'"};
{\ar@{=>}_{\al_m} "C";"B'"};
{\ar@{=>}_{\al_r} "D";"C'"};
\endxy
\end{equation}
where

\quad $\bullet$ $\al_l$ corresponds to the morphism $\boxtimes(u^{\ren}_{X})\to (\Dt_X)_*(\om^{\ren}_X)$, obtained by adjunction from the isomorphism
$u^{\ren}_X\isom\boxtimes^R(\Dt_X)_*(\om^{\ren}_X)$ of \rco{rboxtimes};

\smallskip

\quad $\bullet$ $\al_m$ is the canonical isomorphism $\boxtimes\circ ([c]\otimes\Id)\isom [c\times\Id]\circ\boxtimes$;

\smallskip

\quad $\bullet$ $\al_r$ is the tautological isomorphism $\overset{!}{\otimes}\isom \Dt_X^!\circ\boxtimes$.

\smallskip

(b) By definition, the composition of the top arrow in the lax-commutative diagram \form{true} is $\Tr(\Shv(X)^{\ren},[c])$, while the composition of the bottom arrow is
$\Gm(X, \Dt_X^![c\times\Id]((\Dt_X)_*(\om^{\ren}_X)))$. We denote by $\LT_c^{\true}$ the composition
\[
\Tr(\Shv(X)^{\ren},[c])\overset{\form{true}}{\lra} \Gm(X, \Dt_X^![c\times\Id]((\Dt_X)_*(\om^{\ren}_X)))\simeq
\Gm(\on{Fix}(c), \om^{\ren}_{\on{Fix}(c)})\simeq \Gm(\on{Fix}(c), \om_{\on{Fix}(c)}),
\]
where the first map is induced by the lax commutative diagram \form{true}, the second map is the base change isomorphism, and
the last isomorphism is the one from Section~\re{dualizing}.
\end{Emp}

\begin{Emp}
{\bf Relation with the trace maps of  \cite{Va}.}

\smallskip

For every constructible sheaf $\A\in \Shv(X)^{\constr}$, we denote by ${\cal Tr}_{c,\A}$ the composition
\[
\CHom_{\Shv(X^{\ren})}(\A,[c](\A))\overset{\on{ch}_{c,\A}}{\lra} \Tr(\Shv(X)^{\ren},[c])\overset{\LT^{\true}_c}{\lra}
\Gm(\on{Fix}(c), \om_{\on{Fix}(c)}).
\]

Uniwinding the definition, the last map can be written as a composition
\[
\CHom_{\Shv(X^{\ren})}(\A,[c](\A))\simeq \Gm(X,\Dt_X^!(\B{D}_X(\A)\boxtimes [c](\A)))\simeq
\Gm(X, \Dt_X^![c\times\Id](\B{D}_X(\A)\boxtimes \A))\to
\]
\[
\to\Gm(X, \Dt_X^![c\times\Id]((\Dt_X)_*(\om^{\ren}_X)))\simeq
\Gm(\on{Fix}(c), \om_{\on{Fix}(c)}),
\]
where

\smallskip

$\bullet$ the first two isomorphisms are standard;

\smallskip

$\bullet$ the third map is induced by the canonical morphism $\B{D}_X(\A)\boxtimes \A\to (\Dt_X)_*(\om^{\ren}_X)$,
obtained by adjointness from the evaluation map $\B{D}_X(\A)\otimes \A)\to\om_X$;

\smallskip

$\bullet$ the last isomorphism is the one from Section~\re{truelocal}(b).

\smallskip

In particular, the map ${\cal Tr}_{c,\A}$ is an extensions to stacks and complexes of the  trace map of \cite[Section~1.2.2]{Va}, which in its turn is motivated by the Verdier pairing of Illusie \cite{Il}.
\end{Emp}

\begin{Emp} \label{E:verdier}
{\bf Verdier-compatible stacks.}

\smallskip

(a) Following \cite[Section~2.6]{main}, we say that an Artin stack $X$ is {\em Verdier-compatible}, if the Verdier duality preserves the subcategory of compact objects $\Shv(X)^c\subseteq \Shv(X)^{\constr}$.

\smallskip

(b) As it is shown in \cite[Theorem~F.2.8]{main} any Artin stack $X$ that can be covered by open substacks of the form $S/G$, where $S$ is an algebraic space of finite type over $k$ and $G$ is an algebraic group (of finite type) over $k$ is Verdier-compatible.\footnote{Though the assertion is only shown  under an assumptions that $S$ is a scheme and $G$ is affine, these assumptions are not needed. Namely, the assumption that $S$ is a scheme was never used, and the only place, where the assumption on $G$ was used, was to establish an isomorphism \cite[Formula~(F.4)]{main} asserting that if $\pi_{\pt}:\pt\to BG$ is the projection, then $(\pi_{\pt})_*(\qlbar)\simeq (\pi_{\pt})_!(\qlbar)[d]$ for some $d$. However, this assertion for a general $G$ is a formal consequence of the assertion for the affine $G$. Namely,
by Chevalley theorem, there exists a (unique) normal closed connected subgroup $H$ of $G$ such that $H$ is affine and $G/H$ is proper. Therefore the projection $\pt\to BG$ decomposes as a composition $\pt\to BH\to BG$, where the second morphism is proper. Therefore isomorphism
\cite[Formula~(F.4)]{main} for $G$ formally follows from that for $H$.} Note that this class of Artin stacks is closed under finite products and fiber products.

\smallskip

(c) Note that for every morphism $f:X\to Y$ between Verdier-compatible Artin stacks the functor $f_*$ preserves compact objects (because $f_!$ does), and   $f^!$ preserves compact objects if $f$ is safe (because $f^*$ does).

\smallskip

(d) Combining part (c) with Section~\re{ren}(c),(d), we see that the morphism \form{compren} is an isomorphism if $X$ and $Y$ are Verdier-compatible, and that the base change morphism \form{bcren} is an isomorphism if $A$ and $C$ are Verdier-compatible and $a$ is safe
(compare \cite[Section~A.3]{serre}).
\end{Emp}

Until end of this section we assume that $X$ is Verdier-compatible.

\begin{Emp} \label{E:reftrue}
{\bf Construction.}

\smallskip

(a) Since $X$ is assumed to be Verdier-compatible, the DG category $\Shv(X)$ is canonically self dual, and the equivalence of categories
\[
\Shv(X)\to\Shv(X)^\vee=\Fun_{\cont}(\Shv(X),\Vect)
\]
is given by the formula
\[
\CB\mapsto \CHom_{\Shv(X)}(\B{D}_X(\CB),-), \quad \CB\in\Shv(X)^c,
\]
where $\B{D}_X$ denotes the Verdier duality functor $(\Shv(X)^c)^{\on{op}}\to \Shv(X)^c$.

\smallskip

The above equivalence corresponds to the pairing
\[
\ev_X: \Shv(X)\otimes\Shv(X)\to\Vect:\A\otimes \CB\mapsto \Gm_{\tri}(X,\A\overset{!}{\otimes}\CB).
\]

\smallskip

(b) Let $u_X=u_{\Shv(X)}$ be the \emph{unit} object of $\Shv(X)\otimes \Shv(X)$. Then, as it was shown in \cite[Section~22.2.4]{main}, we have
a canonical isomorphism $u_X\simeq \boxtimes^R((\Dt_X)_*(\om_X))$, where $\boxtimes:\Shv(X)\otimes\Shv(X)\to \Shv(X\times X)$ is the exterior product functor, and  $\boxtimes^R$ is its right adjoint.\footnote{Alternatively, it can be shown by modifying the argument of \rco{rboxtimes}.
Namely, observations \re{obs} and the formulation of \rl{rboxtimes} will continue to hold without any changes if one replaces $\Shv(-)^{\ren}$ by $\Shv(-)$,   $\Shv(-)^{\constr}$ by $\Shv(-)^c$, $\Gm$ by $\Gm_{\tri}$ and $(p_Y)_*$ by $(p_Y)_{\tri}$ in all places, so it remains to show the
modified version of  formula \eqref{e:boxR}. For this we observe that since $\A$ and $\A\boxtimes \CB$ are compact, both sides in the above modified formula \eqref{e:boxR} commute with colimits in $K$, and hence we can assume that $K$ is compact. In this case, $\CK\overset{!}{\otimes}p_X^!(\B{D}_X(\CB))$ is compact as well (see \cite[Lemma~F.4.4]{main}), thus $(p_Y)_{\tri}$ can be replaced by $(p_Y)_{*}$, and we finish the proof as in \rl{rboxtimes}.}

\smallskip

(c) Consider a lax commutative diagram

\begin{equation} \label{Eq:reftrue}
\xy
(0,10)*+{\Vect}="A";
(40,10)*+{\Shv(X)\otimes \Shv(X)}="B";
(80,10)*+{\Shv(X)\otimes \Shv(X)}="C";
(120,10)*+{\Vect}="D";
(0,-10)*+{\Vect}="A'";
(40,-10)*+{\Shv(X\times X)}="B'";
(80,-10)*+{\Shv(X\times X)}="C'";
(120,-10)*+{\Vect,}="D'";
{\ar@{->}^{u_X} "A";"B"};
{\ar@{->}_{(\Dt_X)_*(\om_X)} "A'";"B'"};
{\ar@{->}^{\ev_X} "C";"D"};
{\ar@{->}_{\Gm_{\tri}\circ\Dt_X^!} "C'";"D'"};
{\ar@{->}^{[c]\otimes\Id} "B";"C"};
{\ar@{->}_{[c\times\Id]} "B'";"C'"};
{\ar@{->}^{\on{id}} "A";"A'"};
{\ar@{->}^{\on{id}} "D";"D'"};
{\ar@{->}^{\boxtimes} "B";"B'"};
{\ar@{->}^{\boxtimes} "C";"C'"};
{\ar@{=>}_{\al_l} "B";"A'"};
{\ar@{=>}_{\al_m} "C";"B'"};
{\ar@{=>}_{\al_r} "D";"C'"};
\endxy
\end{equation}
where the $2$-morphisms are defined as in Section~\re{truelocal}(a).

\smallskip

(d) By definition, the composition of the top arrow in diagram \form{reftrue} is $\Tr(\Shv(X),[c])$, while the composition of the bottom arrow is
$\Gm_{\tri}(X, \Dt_X^![c\times\Id]((\Dt_X)_*(\om_X)))$. In particular, the lax commutative diagram \form{reftrue} gives rise to the morphism
\[
\Tr(\Shv(X),[c])\to\Gm_{\tri}(X, \Dt_X^![c\times\Id]((\Dt_X)_*(\om_X))).
\]

(e) Assume in addition that Artin stacks $C$ and $\Fix(c)$ are Verdier-compatible. In this case, using Section~\re{verdier}(d),
we have a canonical (base change) isomorphism
\[
 \Gm_{\tri}(X, \Dt_X^![c\times\Id]((\Dt_X)_*(\om_X)))\simeq  \Gm_{\tri}(X, \Dt_X^! (c_{\tri}(\om_C)))\simeq
 \Gm_{\tri}(\on{Fix}(c), \om_{\on{Fix}(c)}).
 \]

(f) We denote by $\LT_{c,\tri}^{\true}$ the composition
\[
\Tr(\Shv(X),[c])\overset{(d)}{\lra} \Gm_{\tri}(X, \Dt_X^![c\times\Id]((\Dt_X)_*(\om_X)))\overset{(e)}{\simeq}
\Gm_{\tri}(\on{Fix}(c), \om_{\on{Fix}(c)}).
\]
\end{Emp}

The proof of the following result is obtained by unwinding the definitions and will be given in \rs{proofs}.

\begin{Prop} \label{P:refined}
The diagram \form{refined} is homotopy commutative.
\end{Prop}

\section{Functoriality of true local terms} \label{S:functor}

\begin{Emp} \label{E:functorfixed}
{\bf Notation.}

\smallskip

(a) Every morphism $[f]:c\to d$ of correspondences (see Section~\re{morphcorr}) induces a morphism $g_{\Dt}:\on{Fix}(c)\to \on{Fix}(d)$ between stacks of fixed points.

\smallskip

(b) Assume that the induced morphism $g_{\Dt}:\on{Fix}(c)\to \on{Fix}(d)$ is proper and safe. Then it follows from \rco{safe} and Sections~ \re{ren}(c), \re{safe}(d) that we have natural maps
\[
(g_{\Dt})_{!}:\Gm(\on{Fix}(c), \om_{\on{Fix}(c)})\simeq \Gm(\on{Fix}(d), (g_{\Dt})_{!}(\om_{\on{Fix}(c)}))\to \Gm(\on{Fix}(d), \om_{\on{Fix}(d)})
\]
and
\[
(g_{\Dt})_{!}:\Gm_{\tri}(\on{Fix}(c), \om_{\on{Fix}(c)})\simeq \Gm_{\tri}(\on{Fix}(d), (g_{\Dt})_{!}(\om_{\on{Fix}(c)}))\to \Gm_{\tri}(\on{Fix}(d), \om_{\on{Fix}(d)}).
\]

\smallskip

Furthermore, these maps extend to a homotopy commutative diagram:
\[
\CD
\Gm_{\tri}(\on{Fix}(c), \om_{\on{Fix}(c)}) @>(g_{\Dt})_{!}>> \Gm_{\tri}(\on{Fix}(d), \om_{\on{Fix}(d)})\\
@V\re{ren}(e)VV @VV\re{ren}(e)V\\
\Gm(\on{Fix}(c), \om_{\on{Fix}(c)}) @>(g_{\Dt})_{!}>> \Gm(\on{Fix}(d), \om_{\on{Fix}(d)}).
\endCD
\]

\smallskip

(c) Assume now that morphism $g_{\Dt}$ is safe and quasi-smooth of relative dimension zero, e.g. \'etale. Then we have a natural map

\[
g_{\Dt}^*:\Gm_{\tri}(\on{Fix}(d), \om_{\on{Fix}(d)})\to
\Gm_{\tri}(\on{Fix}(c), g_{\Dt}^*(\om_{\on{Fix}(d)}))\overset{\on{Gys}_{g_{\Dt}}}{\lra} \Gm_{\tri}(\on{Fix}(c), \om_{\on{Fix}(c)}),
\]
where the first morphism is induced by the composition
\[
\Gm_{\tri}(\on{Fix}(d),-)\overset{\on{unit}}{\lra} \Gm_{\tri}(\on{Fix}(d),-)\circ(g_{\Dt})_*\circ  g_{\Dt}^*\overset{\re{safe}(d)}{\simeq}
\Gm_{\tri}(\on{Fix}(d),-)\circ(g_{\Dt})_{\tri}\circ  g_{\Dt}^*\overset{\re{ren}(c)}{\lra}\Gm_{\tri}(\on{Fix}(c),-)\circ  g_{\Dt}^*,
\]
and $\on{Gys}_{g_{\Dt}}:g_{\Dt}^*\to g_{\Dt}^!$ is the Gysin map (see Section~\re{gysin}). Moreover, even without the assumption that $g_{\Dt}$ is safe, we have a natural map
\[
g_{\Dt}^*:\Gm(\on{Fix}(d), \om_{\on{Fix}(d)})\to \Gm(\on{Fix}(c), g_{\Dt}^*(\om_{\on{Fix}(d)}))\overset{\on{Gys}_{g_{\Dt}}}{\lra}\Gm(\on{Fix}(c), \om_{\on{Fix}(c)}).
\]

\smallskip

Furthermore, these maps extend to a homotopy commutative diagram:
\[
\CD
\Gm_{\tri}(\on{Fix}(d), \om_{\on{Fix}(d)}) @>g_{\Dt}^*>> \Gm_{\tri}(\on{Fix}(c), \om_{\on{Fix}(c)})\\
@V\re{ren}(e)VV @VV\re{ren}(e)V\\
\Gm(\on{Fix}(d), \om_{\on{Fix}(d)}) @>g_{\Dt}^*>> \Gm(\on{Fix}(c), \om_{\on{Fix}(c)}).
\endCD
\]

\end{Emp}

The following result, whose proof will be given in Sections~\ref{S:proofs} and \ref{S:proofs2}, asserts that
the {\em true local terms} commute with proper pushforwards and smooth pullbacks.

\begin{Thm} \label{T:true} \hfill

\smallskip

\noindent{\em(a)} In the situation of Section~\re{pf}(i), the following diagram commutes up to a canonical homotopy:
\begin{equation*}
\begin{CD}
\Tr(\Shv(X)^{\ren},[c]) @>\LT_{c}^{\true}>>\Gm(\on{Fix}(c), \om_{\on{Fix}(c)})\\
@V\Tr([f]_!) VV @VV(g_{\Dt})_!V\\
\Tr(\Shv(Y)^{\ren},[d]) @>\LT_{d}^{\true}>>\Gm(\on{Fix}(d), \om_{\on{Fix}(d)}).
\end{CD}
\end{equation*}

\smallskip

\noindent{\em(b)} In the situation of Section~\re{pb}(i), assume that morphism $g_{\Dt}$ is quasi-smooth of relative dimension zero. Then the following diagram commutes up to a canonical homotopy:
\begin{equation*}
\begin{CD}
\Tr(\Shv(Y)^{\ren},[d]) @>\LT_{d}^{\true}>>\Gm(\on{Fix}(d), \om_{\on{Fix}(d)})\\
@V\Tr([f]^*) VV @VVg_{\Dt}^*V\\
\Tr(\Shv(X)^{\ren},[c]) @>\LT_{c}^{\true}>>\Gm(\on{Fix}(c), \om_{\on{Fix}(c)}).
\end{CD}
\end{equation*}
\end{Thm}

\begin{Emp} \label{E:contracting}
{\bf Contracting correspondences.} Let $c:C\to X\times X$ be a correspondence, and let $Z\subseteq X$ be a $c$-invariant closed substack
(see Section~\re{restr}).

\smallskip

(a) Since $Z\subseteq X$ is $c$-invariant, we have $c_l^{\cdot}(\I_Z)\subseteq c_r^{\cdot}(\I_Z)$, where
$\I_Z\subseteq\O_X$ is the sheaf of ideals of $Z$, and $c_l^{\cdot}$ and $c_r^{\cdot}$ denote pullbacks of sheaves.

\smallskip

(b) Following \cite[Section~2.1.1]{Va}, we say that $c$ is {\em contracting near} $Z$, if there exists $n\in\B{N}$ such that
$c_l^{\cdot}(\I_Z)^n\subseteq c_r^{\cdot}(\I_Z)^{n+1}$.

\smallskip

(c) It was proved in \cite[Theorem~2.1.3(a)]{Va} that if $c$ {\em contracting near} $Z$, then the inclusion
$i_{\Dt}:\on{Fix}(c|_Z)_{\red}\hra \on{Fix}(c)_{\red}$ is an open embedding, so we have restriction functors (see Section~\re{functorfixed}(c))
\[
i_{\Dt}^*:\Gm(\on{Fix}(c), \om_{\on{Fix}(c)})\to \Gm(\on{Fix}(c|_Z), \om_{\on{Fix}(c|_Z)})
\]
and
\[
i_{\Dt}^*:\Gm_{\tri}(\on{Fix}(c), \om_{\on{Fix}(c)})\to \Gm_{\tri}(\on{Fix}(c|_Z), \om_{\on{Fix}(c|_Z)}) .
\]

\smallskip

(d) {\bf Example.} Assume that $X$ is defined over $\fq$, and let $c$ be the correspondence
\[(\Fr,\Id):X\to X\times X.\]
Then $c$ is contracting near every closed substack $Z\subseteq X$ defined over $\fq$.
\end{Emp}

The proof of the following result will be given in \rs{contracting}.

\begin{Thm} \label{T:contracting}
Assume that a correspondence $c:C\to X\times X$ is contracting near $Z\subseteq X$. Then
the true local terms map commute with $*$-restriction with respect to the embedding $i:Z\hra X$.
In other words, in the notation of Sections~\re{restr}(c) and \re{contracting}(c), the following diagram commutes up
to a canonical homotopy:
\begin{equation*}
\begin{CD}
\Tr(\Shv(X)^{\ren},[c]) @>\LT_{c}^{\true}>>\Gm(\on{Fix}(c), \om_{\on{Fix}(c)})\\
@V\Tr([i_c]^*) VV @VVi_{\Dt}^*V\\
\Tr(\Shv(Z)^{\ren},[c|_Z]) @>\LT_{c|_Z}^{\true}>>\Gm(\on{Fix}(c|_Z), \om_{\on{Fix}(c|_Z)}).
\end{CD}
\end{equation*}
\end{Thm}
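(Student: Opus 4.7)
The plan is to follow the strategy of \cite{Va}, using deformation to the normal cone to reduce the assertion to a model situation on the normal cone $N := N_{Z/X}$, where the scaling $\BG_m$-action can be used to finish.

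First, I would form the deformation to the normal cone $\widetilde{X} := D(X,Z)$, which is an Artin stack flat over $\BA^1$ whose fiber over $t \ne 0$ is identified with $X$ and whose fiber over $t=0$ is the normal cone $N$. The key input is the contracting hypothesis from \re{contracting}(b): the inclusion $c_l^{\cdot}(\I_Z)^n \subset c_r^{\cdot}(\I_Z)^{n+1}$ is exactly what is needed to extend the correspondence $c: C \to X\times X$ to a correspondence $\widetilde{c}: \widetilde{C} \to \widetilde{X}\times_{\BA^1}\widetilde{X}$ over $\BA^1$, for which the closed substack $Z\times \BA^1 \hookrightarrow \widetilde{X}$ is $\widetilde{c}$-invariant and $\widetilde{c}$ remains contracting near it. The restriction of $\widetilde{c}$ over $t\ne 0$ recovers $c$, while its restriction over $0$ gives a correspondence $c_N$ on $N$ that contracts onto the zero section $Z \subset N$.

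Second, I would apply the results to be proved in Section 4: the commutation of true local terms with nearby cycles and with extensions of scalars. Combined with \rt{true}(a) applied to the smooth projection $\widetilde{X}\to \BA^1$ (and its analogue for $Z\times\BA^1\to\BA^1$), this allows me to compare the commutation square for $(X,Z,c)$ over the generic fiber of $\BA^1$ with the corresponding commutation square for $(N,Z,c_N)$ over the special fiber; they become canonically homotopic. Thus the statement of \rt{contracting} for $(X,Z,c)$ reduces to the analogous statement for $(N, Z, c_N)$.

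Third, on the normal cone, I would use the contracting $\BG_m$-action on $N$ (with fixed locus $Z$). By \cite[Thm.~2.1.3]{Va}, the natural map $\on{Fix}(c_N|_Z)_{\red} \hookrightarrow \on{Fix}(c_N)_{\red}$ is an open embedding, and under the contracting hypothesis it is in fact an equality on reduced stacks (the $\BG_m$-action forces all fixed points of $c_N$ to lie over $Z$). Consequently the vertical map $i_\Delta^*$ on the right-hand side of the diagram becomes an isomorphism, and by \lemref{L:additivity} the vertical map $\Tr([i_{c_N}]^*)$ on the left-hand side becomes an isomorphism as well, with inverses given by $*$-pushforward along the closed embedding. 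It then suffices to verify the square after pre-composing with $\Tr([i_{c_N}]_!)$, and this reduces, via \re{restr}(e), to the tautological identity $\LT_{c_N|_Z}^{\true} \simeq \LT_{c_N|_Z}^{\true}$, which is trivially homotopic to itself by unwinding the diagram \form{true}.

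The main obstacle will be the second step: carefully setting up the correspondence $\widetilde{c}$ on $\widetilde{X}$ (which requires the contracting hypothesis in an essential way, since otherwise the proper transform does not behave well) and rigorously invoking commutation of the true local term with nearby cycles at the level of categorical traces. Once we are on the normal cone and can exploit the $\BG_m$-action contracting to $Z$, the remaining comparison becomes a formal consequence of the additivity lemma \lemref{L:additivity} together with the definitions in \re{truelocal} and \re{restr}.
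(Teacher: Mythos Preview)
Your overall strategy—deformation to the normal cone plus commutation of true local terms with nearby cycles—matches the paper's, but the paper organizes the reduction differently from your step 2, and your version of that step has a gap.

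You assert that the commutation square for $(X,Z,c)$ ``becomes canonically homotopic'' to that for $(N,Z,c_N)$ via specialization. Making this precise requires a cube whose front and back faces are the two squares and whose side faces express compatibility of $\Tr([i]^*)$ and of $i_\Delta^*$ with specialization. You do not address these side faces; the left one in particular amounts to the identity $N_Z(i)^*\circ\on{sp}_Z\simeq i^*$, i.e.\ Verdier's theorem (see \re{spec normal cone}(b)), which you never invoke. Your appeal to ``\rt{true}(a) applied to the smooth projection $\wt{X}\to\BA^1$'' is also confused: that theorem concerns proper pushforwards. The paper avoids the cube entirely: after shrinking $C$ so that $\Fix(c)_{\red}=\Fix(c|_Z)_{\red}$, it uses \rl{additivity} and \rt{true}(a) to reduce the whole square on $X$ to the single vanishing statement $\LT^{\true}_c\circ\Tr([j_c]_!)\simeq 0$, and then transports only this vanishing to the normal cone via \rt{true2} (so only the ``top face'' of the cube is needed). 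There the vanishing follows from \rl{trace zero} together with $N_Z(i)^*\circ\on{sp}_Z\circ j_!\simeq i^*\circ j_!\simeq 0$, which is where Verdier's theorem actually enters.

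A smaller issue in your step 3: to conclude that $\Tr([i_{c_N}]^*)$ is an isomorphism via \rl{additivity} you need $\Tr(\Shv(N\sm Z),[c_N|_{N\sm Z}])\simeq 0$, which requires not merely that $\Fix(c_N)$ lies over $Z$ but that $c_N|_{N\sm Z}$ has \emph{empty source}; this follows from \re{def}(c), namely that $N_Z(c_l)$ lands set-theoretically in the zero section. And the final check after precomposing with $\Tr([i_{c_N}]_!)$ is not a tautology—it uses \rt{true}(a) for the closed embedding $Z\hra N$, which you should say explicitly.
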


\begin{Emp}
{\bf Remark.}
We expect that one can show that \rt{contracting} continues to hold if one replaces the assumption
that  {\em $c$ is contracting near $Z\subseteq X$} by a weaker assumption that $c$ has {\em no almost fixed points in the punctured tubular neighborhood of $Z$} (see \cite[Definition~4.4 and Theorem~4.10]{Va2}).
\end{Emp}

Combining Theorems \ref{T:true} and \ref{T:contracting} with the commutativity of diagram \form{chernpf} and \form{chernpb}, we get the following
corollary:

\begin{Cor} \label{C:true1} \hfill

\smallskip

\noindent(a) In the situation of \rt{true}(a), for every constructible sheaf $\A\in\Shv(X)^{\constr}$, the following diagram commutes up to a canonical homotopy:

\smallskip
\begin{equation*}
\begin{CD}
\CHom_{\Shv(X)^{\ren}}(\A,[c](\A)) @>{\cal Tr}_{c,\A}>>\Gm(\on{Fix}(c), \om_{\on{Fix}(c)})\\
@V[f]_! VV @VV(g_{\Dt})_!V\\
\CHom_{\Shv(Y)^{\ren}}(f_!(\A),[d](f_!(\A))) @>{\cal Tr}_{d,f_!(\A)}>>\Gm(\on{Fix}(d), \om_{\on{Fix}(d)}).
\end{CD}
\end{equation*}

\smallskip

\noindent(b)  In the situation of \rt{true}(b), for every constructible sheaf $\A\in\Shv(Y)^{\constr}$, the following diagram commutes up to a canonical homotopy:

\begin{equation*}
\begin{CD}
\CHom_{\Shv(Y)^{\ren}}(\A,[d](\A)) @>{\cal Tr}_{d,\A}>> \Gm(\on{Fix}(d), \om_{\on{Fix}(d)})\\
@V[f]^* VV @VVg_{\Dt}^*V\\
\CHom_{\Shv(X)^{\ren}}(f^*(\A),[c](f^*(\A))) @>{\cal Tr}_{c,f^*(\A)}>> \Gm(\on{Fix}(c), \om_{\on{Fix}(c)}).
\end{CD}
\end{equation*}

\smallskip

\noindent(c)  In the situation of \rt{contracting} for every constructible sheaf $\A\in\Shv(X)^{\constr}$, the following diagram commutes up to a canonical homotopy:
\begin{equation*}
\begin{CD}
\CHom_{\Shv(X)^{\ren}}(\A,[c](\A)) @>{\cal Tr}_{c,\A}>>\Gm(\on{Fix}(c), \om_{\on{Fix}(c)})\\
@V[i_c]^* VV @VVi_{\Dt}^*V\\
\CHom_{\Shv(Z)^{\ren}}(\A|_Z,[c|_Z](\A|_Z)) @>{\cal Tr}_{c|_Z,\A|_Z}>>\Gm(\on{Fix}(c|_Z), \om_{\on{Fix}(c|_Z)}).
\end{CD}
\end{equation*}
\end{Cor}

\begin{Emp}
{\bf Remark.} Notice that while parts (a) and (c) of \rco{true1} are extensions to stacks and complexes of the corresponding results of \cite{Va},
part (b) seems to be completely new.\footnote{Recently, a derived version of part~(b) also appeared in \cite[Proposition~4.5.4]{FYZ}.}
\end{Emp}

The following result, whose proof will be given in Sections~\ref{S:proofs} and \ref{S:proofs2}, asserts that Theorem \ref{T:true}  has
an analog for refined true local terms maps. This result is not needed for \rt{local terms} and is only included for completeness and future references.

\begin{Thm} \label{T:reftrue} Assume that Artin stacks $X, Y, C, D, \Fix(c)$ and $\Fix(d)$ are Verdier-compatible.

\smallskip

\noindent{\em(a)} In the situation of Section~\re{pf}(i), the following diagram commutes up to a canonical homotopy:
\begin{equation*}
\begin{CD}
\Tr(\Shv(X),[c]) @>\LT_{c,\tri}^{\true}>>\Gm_{\tri}(\on{Fix}(c), \om_{\on{Fix}(c)})\\
@V\Tr([f]_!) VV @VV(g_{\Dt})_!V\\
\Tr(\Shv(Y),[d]) @>\LT_{d,\tri}^{\true}>>\Gm_{\tri}(\on{Fix}(d), \om_{\on{Fix}(d)}).
\end{CD}
\end{equation*}

\smallskip

\noindent{\em(b)} In the situation of Section~\re{pb}(i)(b), assume that morphism $g_{\Dt}$ satisfies the assumptions of Section~\re{functorfixed}(c). Then the following diagram commutes up to a canonical homotopy:
\begin{equation*}
\begin{CD}
\Tr(\Shv(Y),[d]) @>\LT_{d,\tri}^{\true}>>\Gm_{\tri}(\on{Fix}(d), \om_{\on{Fix}(d)})\\
@V\Tr([f]^*) VV @VVg_{\Dt}^*V\\
\Tr(\Shv(X),[c]) @>\LT_{c,\tri}^{\true}>>\Gm_{\tri}(\on{Fix}(c), \om_{\on{Fix}(c)}).
\end{CD}
\end{equation*}
\end{Thm}

For completeness we also state a result (without proof) asserting that Theorem \ref{T:true} and \ref{T:reftrue} have a common refinement.

\begin{Thm} \label{T:refinedtrue} \hfill

\smallskip

(a) In the situation of \rt{reftrue}(a), we have a natural homotopy commutative cube:
\[
\xy
(0,0)*+{\Tr(\Shv(X),[c])}="A";
(25,15)*+{\Tr(\Shv(X)^{\ren},[c])}="B";
(0,-35)*+{\Tr(\Shv(Y),[d])}="C";
(25, -20)*+{\Tr(\Shv(Y)^{\ren},[d])}="D";
(55,0)*+{\Gm_{\tri}(\on{Fix}(c), \om_{\on{Fix}(c)})}="A'";
(80,15)*+{\Gm(\on{Fix}(c), \om_{\on{Fix}(c)})}="B'";
(55,-35)*+{\Gm_{\tri}(\on{Fix}(d), \om_{\on{Fix}(d)})}="C'";
(80,-20)*+{\Gm(\on{Fix}(d), \om_{\on{Fix}(d)})}="D'";
{\ar@{->}^{\Tr(\ren_X,[c])} "A";"B"};
{\ar@{->}_{\Tr([f]_!)} "A";"C"};
{\ar@{->}_{\Tr([f]_!)} "B";"D"};
{\ar@{->}_{\Tr(\ren_Y,[d])} "C";"D"};
{\ar@{->}^{\re{ren}(e)} "A'";"B'"};
{\ar@{->}^{(g_{\Dt})_!} "A'";"C'"};
{\ar@{->}^{(g_{\Dt})_!} "B'";"D'"};
{\ar@{->}_{\re{ren}(e)} "C'";"D'"};
{\ar@{->}^{\LT_{c,\tri}^{\true}} "A";"A'"};
{\ar@{->}^{\LT_{c}^{\true}} "B";"B'"};
{\ar@{->}_{\LT_{d,\tri}^{\true}} "C";"C'"};
{\ar@{->}_{\LT_{d}^{\true}} "D";"D'"};
\endxy
\]
whose boundary consists of homotopy commutative squares of Theorems~\ref{T:true}(a) and \ref{T:reftrue}(a), diagram \form{refined}, \rp{compatpf} and Section~\re{functorfixed}(b).

\smallskip

(b) In the situation of \rt{reftrue}(b), we have a natural homotopy commutative cube (similar to part (a)), whose boundary consists of homotopy commutative squares of Theorems~\ref{T:true}(b) and \ref{T:reftrue}(b), diagram \form{refined}, \rp{compatpb} and Section~\re{functorfixed}(c).
\end{Thm}

We finish this section by stating another result without a proof asserting that Theorem \ref{T:contracting} also have a refinement in which refined true local terms maps are taken into an account.

\begin{Thm} \label{T:refinedcontracting}
In the situation of \rt{contracting}, assume that $X$ and $C$ satisfy the assumption of Section~\re{verdier}(b).
Then the following diagram commutes up to a canonical homotopy:

\begin{equation*}
\begin{CD}
\Tr(\Shv(X),[c]) @>\LT_{c,\tri}^{\true}>>\Gm_{\tri}(\on{Fix}(c), \om_{\on{Fix}(c)})\\
@V\Tr([i_c]^*) VV @VVi_{\Dt}^*V\\
\Tr(\Shv(Z),[c|_Z]) @>\LT_{c|_Z,\tri}^{\true}>>\Gm_{\tri}(\on{Fix}(c|_Z), \om_{\on{Fix}(c|_Z)}).
\end{CD}
\end{equation*}
Moreover, the above diagram together with that of \rt{contracting}, diagram \form{refined}, \rp{compatpb} and Section~\re{functorfixed}(c) naturally  extends to a homotopy commutative cube (as in \rt{refinedtrue}(a)).
\end{Thm}

\section{Local terms and the Grothendieck--Lefschetz trace formula}
In this section we will prove \rt{local terms}. Our ground field $k$ will be $\Fqb$.
However, all Artin stacks $X$ and all morphisms $f:X\to Y$ that will appear in this
section will be assumed defined over $\Fq$, so that $X$ carries the geometric
Frobenius endomorphism $\Fr$ and $f$ intertwines endomorphisms $\Fr$ on $X$ and $Y$.

\begin{Emp} \label{E:naive}
{\bf $*$-Pullbacks and the naive local terms for $\Shv(-)^{\ren}$.} \hfill

\smallskip

(a) Notice that for every morphism $f:X\to Y$, the pullback functor
\[
f^*:\Shv(Y)^{\ren}\to \Shv(X)^{\ren}
\]
induces a map of traces
\[
\Tr(f^*,\Fr):\Tr(\Shv(Y)^{\ren},\Fr)\to \Tr(\Shv(X)^{\ren},\Fr),
\]
(see Section~\re{pb}(ii)). Furthermore, the assignment $f\mapsto \Tr(f^*,\Fr)$ is compatible with compositions.

\smallskip

(b) For every $x\in X(\fq)$, let $\Spec(\Fqb)=:\pt \overset{\eta_x}\to X$ be the corresponding Frobenius-equivariant morphism.
Hence, it gives rise to a map
\[
\Tr(\eta_x^*,\Fr):\Tr(\Shv(X)^{\ren},\Fr)\to \Tr(\Shv(\pt),\Fr)\simeq\qlbar.
\]

\smallskip

(c) We define the {\em naive local terms map}
\[
\LT_X^{\naive}:\Tr(\Shv(X)^{\ren},\Fr)\to \Func(X(\fq),\qlbar)
\]
as the unique map such that
\[
\eta_x(\fq)^*\circ \LT_X^{\naive}=\Tr(\eta_x^*,\Fr)\text{ for all }x\in X(\fq).
\]

\smallskip

(d) By definition, naive local terms commute with $*$-pullbacks.
\end{Emp}

\begin{Emp} \label{E:naive2}
{\bf The naive local terms for $\Shv(-)$.}

\smallskip

(a) By analogy with Section~\re{goal}, we denote by $\LT_X^{\naive}:\Tr(\Shv(X),\Fr)\to \Func(X(\fq),\qlbar)$ the composition
\[
\Tr(\Shv(X),\Fr)\overset{\Tr(\ren_X,\Fr)}{\lra}\Tr(\Shv(X)^{\ren},\Fr)\overset{\LT_X^{\naive}}{\lra}\Func(X(\fq),\qlbar)
\]
and also call it the {\em naive local terms map}.

\smallskip

(b) Using \rp{compatpb}, one can see that the map of part (a) can be also defined directly. Namely, the pullback map $\Tr(f^*,\Fr)$ between the $\Shv(-)$'s is defined for all safe morphisms, and the naive local terms map for $\Shv(-)$ is characterized by the formula of Section~\re{naive}(c) taking into an account that every morphism $\eta_x$ is representable, hence safe.

\smallskip

(c) By part (b), the map of part (a) commutes with $*$-pullbacks with respect to safe morphisms.
\end{Emp}

\begin{Emp} \label{E:truefrob}
{\bf The true local terms.} Consider correspondence
\begin{equation} \label{Eq:grfr}
c=(\Fr,\Id):X\to X\times X.
\end{equation}
The induced endofunctors $[c]$ of both $\Shv(X)$ and $\Shv(X)^{\ren}$ then coincide with $\Fr_*$, so we have identifications
\[
\Tr(\Shv(X)^{\ren},[c])\simeq \Tr(\Shv(X)^{\ren},\Fr_*)\text{ and }\Tr(\Shv(X),[c])\simeq \Tr(\Shv(X),\Fr_*).
\]

By \rl{frob} below, we can identify
$\Fix(c)\simeq X(\fq)$, thus we get an identification
\[
\Gm(\Fix(c),\om_{\Fix(c)})\simeq \Func(X(\fq),\qlbar).
\]

Therefore the maps $\LT_c^{\true}$ from Section~\re{goal}(a) can be interpreted as maps
\[
\Tr(\Shv(X)^{\ren},\Fr)\to \Func(X(\fq),\qlbar)\text{ and } \Tr(\Shv(X),\Fr)\to \Func(X(\fq),\qlbar),
\]
are will be denoted by $\LT_X^{\true}$.
\end{Emp}

\begin{Lem} \label{L:frob}
The stack of fixed points $\Fix(c)$ of the correspondence \form{grfr} is canonically identified with
the discrete Deligne--Mumford stack, corresponding to a finite groupoid $X(\fq)$.
\end{Lem}

\begin{proof}
Our argument is an almost a repetition of that of \cite[Lemma~3.3]{Va1}, where the assertion was shown under a certain
(unnecessary) assumption.

\smallskip

Note first that $Y:=\Fix(c)$ is an algebraic stack of finite presentation over $\fqbar$, defined over $\fq$.

\smallskip

\begin{Cl} \label{C:frob}
The natural maps
\[
\pi:Y(\fqbar[t]/(t^2))\to Y(\fqbar)\text{ and  }i:Y(\fqbar)\to Y(\fqbar[t]/(t^2)),
\] corresponding to the homomorphisms of $\fqbar$-algebras $\pi:\fqbar[t]/(t^2)\to\fqbar$ and  $i:\fqbar\to\fqbar[t]/(t^2)$, are equivalences of categories.
\end{Cl}

\begin{proof}
Since $\pi\circ i\simeq\Id$, it suffices to show that the composition
\[
i\circ\pi: Y(\fqbar[t]/(t^2))\to Y(\fqbar[t]/(t^2))
\]
is isomorphic to the identity. But this follows from the observation that for every $\wt{x}\in X(\fqbar[t]/(t^2))$, we have a canonical isomorphism
\[
\Fr(\wt{x})\simeq i\circ\pi\circ\Fr(\wt{x}).
\]
Namely, this assertion is well-known when $X$ is a scheme, and the general case follows from it.
\end{proof}

Let us now come back to the proof of the lemma. Using \rcl{frob} we conclude that  the diagonal morphism $\Dt_Y:Y\to Y\times Y$ is unramified,
hence $Y$ is a Deligne--Mumford stack (see \cite[Tag~06N3]{Stacks}). Using \rcl{frob} again we conclude that $Y$ is \'etale over $\fqbar$.

\smallskip

Note that we have a natural morphism $X(\fq)\to Y$ of Deligne--Mumford stacks, where $X(\fq)$ is viewed as a discrete Deligne--Mumford stack,  and it suffices to show that the induced functor $\psi:X(\fq)\to Y(\fqbar)$ on $\fqbar$-points is an equivalence of categories.

\smallskip

The fact that $\psi$ is fully faithful follows from (and is actually equivalent to) the first axiom of a stack (the sheaf axiom for $\Isom(x,y)$) applied to \'etale covers $\Spec\B{F}_{q^m}\to \Spec \fq$ for all $m\in\B{N}$.

\smallskip

Thus, it suffices to show that functor $\psi$ is essentially surjective. Fix any object $y\in Y(\fqbar)$. Then $y$ corresponds to a pair $(x,\phi)$, where $x\in X(\fqbar)$ and $\phi\in\Isom_{X(\fqbar)}(\Fr(x),x)$, and we want to show that there exists an object $x_0\in X(\fq)$ such that $\psi(x_0)$ is
isomorphic to $(x,\phi)$.

\smallskip

Choose $m$ such that $x$ is (a pull-back of) an object of $X(\B{F}_{q^m})$, and $\phi\in\Isom_{X(\B{F}_{q^m})}(\Fr(x),x)$. Then the norm
\[
N_m(\phi):=\phi\circ \Fr(\phi)\circ\ldots\circ\Fr^{m-1}(\phi):x=\Fr^m(x)\isom x
\]
defines an $\B{F}_{q^m}$-point of an algebraic group $H:=\Isom_{X}(x,x)$.

If $N_m(\phi)=1$, then the existence of $x_0$ is equivalent to the second
axiom of a stack applied to the \'etale covering $\Spec \B{F}_{q^m}\to\Spec \fq$.
To show the general case, note that $H$ is of finite type over $\fqbar$, hence an element $N_m(\phi)\in H(\B{F}_{q^m})$ is of finite order $d$, and thus $N_{dm}(\phi)=(N_m(\phi))^d=1$.
\end{proof}

\begin{Emp} \label{E:remtrue}
{\bf Remark.} Assume that $X$ is Verdier-compatible.

\smallskip

(a) Since the stack of fixed points $\Fix(c)\simeq X(\fq)$ is safe, the canonical map
$\Gm_{\tri}(\Fix(c),\om_{\Fix(c)})\to \Gm(\Fix(c),\om_{\Fix(c)})$ of Section~\re{ren}(e) is an isomorphism. Therefore it follows from
\rp{refined} that the refined true local terms map $\LT_{c,\tri}^{\true}$ coincides with $\LT_X^{\true}$.

\smallskip

(b) Assume in addition that $X$  has finitely many isomorphism classes of $\fqbar$-points. In this case,
it follows from a combination of  \cite[Remark~22.2.6]{main} and part (a) that the true local map $\LT_X^{\true}$ is automatically an isomorphism.
\end{Emp}

The proof of \rt{local terms} is based on the following corollary of Theorems~\ref{T:true} and \ref{T:contracting}:

\begin{Cor} \label{C:true}
The true local term functor
\[
\LT^{\true}:\Tr(\Shv(-)^{\ren},\Fr)\to \Func(-(\fq),\qlbar)\text{ (resp. }\LT^{\true}:\Tr(\Shv(-),\Fr)\to \Func(-(\fq),\qlbar)\text{)}
\]
commutes with:

\smallskip

\noindent{\em(a)} $!$-pushforwards with respect to proper safe morphisms;

\smallskip

\noindent{\em(b)} $*$-pullbacks with respect to smooth morphisms (resp. smooth safe morphisms);

\smallskip

\noindent{\em(c)} $*$-pullbacks with respect to closed embeddings.
\end{Cor}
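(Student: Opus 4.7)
The plan is to derive each of (a), (b), (c) by specializing Theorems \ref{T:true} and \ref{T:contracting} to the Frobenius correspondence $c_X := (\Fr_X, \Id_X) : X \to X \times X$ on each Artin stack $X$. Under the identifications $\Fix(c_X) \simeq X(\fq)$ and $\Gm_{\tri}(\Fix(c_X), \om_{\Fix(c_X)}) \simeq \Func(X(\fq), \qlbar)$ from \re{truefrob}, the map $\LT^{\true}_{c_X}$ is by definition $\LT^{\true}_X$. For any morphism $f: X \to Y$ defined over $\fq$, the commutation of $f$ with Frobenius supplies a morphism of correspondences $[f] = (f, f) : c_X \to c_Y$, under which the induced map $g_\Delta$ is realized as the classical map $f(\fq) : X(\fq) \to Y(\fq)$.

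For part (a), if $f$ is proper then $[f]$ satisfies condition (i) of \re{pf}, so I would invoke \rt{true}(a). It remains only to observe that the map $\Tr([f]_!)$ of Section~2 tautologically matches $\Tr(f_!, \Fr)$ of the introduction, and that $g_{\Delta,!}$ becomes the usual summation-along-fibers map $f(\fq)_!$. For part (b), when $f$ is smooth and schematic, representability forces safeness by \re{ren}(d), and since $g = f$ and $c_r = \Id_X$, the dimension condition $\un{\dim}_g = c_r^{\cdot}(\un{\dim}_f)$ is immediate, so condition (i) of \re{pb} holds. I would then apply \rt{true}(b), matching $\Tr([f]^*)$ with $\Tr(f^*, \Fr)$ from \re{naive} and $g_\Delta^*$ with $f(\fq)^*$.

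For part (c), given a closed embedding $i : Z \hookrightarrow X$ with $Z$ defined over $\fq$, $c_X$-invariance of $Z$ is immediate (since $c_r = \Id_X$ and $\Fr_X$ preserves $Z$), and Example \re{contracting}(d) provides the contracting hypothesis. Hence \rt{contracting} applies directly and yields the desired square, after identifying $i_\Delta^*$ with pullback along $Z(\fq) \hookrightarrow X(\fq)$.

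The only step requiring a small check is inside (b): that $g_\Delta = f(\fq)$ is étale when $f$ is smooth and schematic. I expect this to follow from the fact that $X(\fq)$ and $Y(\fq)$ are $0$-dimensional étale stacks (Frobenius-fixed loci are unramified over the base field), so a smooth schematic morphism between them is automatically étale. Beyond this point, the entire argument is notational bookkeeping, matching the trace-formalism maps of Section~2 with those of the introduction.
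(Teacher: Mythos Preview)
Your proposal is correct and follows essentially the same approach as the paper, which dispatches the corollary in one sentence by citing \rt{true} for (a) and (b) and the combination of Example~\re{contracting}(d) with \rt{contracting} for (c); you have simply supplied the identifications that make those citations go through. Your one flagged check---that $g_\Delta$ is \'etale---is fine: both $\Fix(c_X)$ and $\Fix(c_Y)$ are \'etale over $\Spec \Fqb$ (the graph of Frobenius meets the diagonal transversally), and the cancellation property for \'etale morphisms then forces $g_\Delta$ to be \'etale.
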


\begin{proof}
The assertions (a) and (b) for $\Shv(-)^{\ren}$ follows from \rt{true}, while assertion (c) follows from a combination of Example \re{contracting}(d) and \rt{contracting}. Then the assertion for $\Shv(-)$ follows from a combination of an assertion for $\Shv(-)^{\ren}$
and Propositions~\ref{P:compatpf} and \ref{P:compatpb}.
\end{proof}

\begin{Emp}
\begin{proof}[Proof of \rt{local terms}(a)]
By the definition of the naive local term map, we have to show that for every $x\in X(\fq)$ there exists a canonically defined homotopy
\[
\eta_x(\fq)^*\circ \LT_X^{\true}\simeq \Tr(\eta_x^*,\Fr).
\]
In other words, we have to show that true local terms commute with $*$-pullback with respect to the morphism $\eta_x:\pt\to X$.

Let $Y\subseteq  X$ be the closure of $\eta_x(\pt)$, equipped with a reduced stack structure. Let $G_x:=\Aut_X(x)$
denote the group scheme of automorphisms of $x$, and let $G_{x,\red}$ be the underlying reduced group scheme of $G_x$. Then $\eta_x$ factors as
\begin{equation} \label{Eq:comp}
\eta_x:\pt\overset{\pr_1}{\lra}B(G_{x,\red})\overset{\pr_2}{\lra}B(G_x) \overset{\ov{\eta}_x}{\lra}Y\overset{i}{\lra}X,
\end{equation}
and it suffices to show that true local terms commute with $*$-pullbacks with respect to all the morphisms that
appear in the diagram \form{comp}.

\smallskip

The required assertions follow from \rco{true}. Namely, the assertion for $\pr_1$ follows by \rco{true}(b), the assertion for $i$
follows by \rco{true}(c). The assertion for $\pr_2$ follows by \rco{true}(a): indeed $\pr^*_2$ is an equivalence of
categories with inverse $(\pr_2)_!$, while $\pr_2$ is proper. Finally, the assertion for $\ov{\eta}_x:B(G_x)\to Y$ follows
again by \rco{true}(b), since $\ov{\eta}_x$ is an open embedding.
\end{proof}
\end{Emp}

\begin{Emp}
{\bf Proof of \rt{local terms}(b).} \hfill

\smallskip

First, we record the following particular case of \rl{additivity}:

\begin{Cor} \label{C:additivity}
Let $Z\subseteq X$ be a closed substack, and let $U\subseteq X$ be the complementary open.
Then the functors $i_!:\Shv(U)\to\Shv(X)$ and $j_!:\Shv(U)\to\Shv(X)$
induce an isomorphism of traces
\[\Tr(j_!,\Fr)\oplus\Tr(i_!,\Fr):\Tr(\Shv(U),\Fr)\oplus\Tr(\Shv(Z),\Fr)\to \Tr(\Shv(X),\Fr).\]
\end{Cor}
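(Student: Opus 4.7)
The plan is to derive this corollary as an immediate specialization of \rl{additivity} applied to the correspondence $c=(\Fr,\Id):X\to X\times X$. Under this identification, $[c]=\Fr_*$ and $\Tr(\Shv(X),[c])\simeq\Tr(\Shv(X),\Fr)$, and similarly for the restrictions of $c$ to $Z$ and $U$.

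The only verification required is that $Z$ is $c$-invariant in the sense of \re{restr}(c). Since the standing convention of this section is that all substacks are defined over $\Fq$, the Frobenius morphism $\Fr:X\to X$ sends $Z$ into itself. With $c_l=\Fr$ and $c_r=\Id$, one computes $c_r^{-1}(Z)=Z$ and $c_l(c_r^{-1}(Z))=\Fr(Z)\subset Z$, so $c$-invariance holds. Equivalently, the open complement $U=X\setminus Z$ satisfies $c_l^{-1}(U)=c^{-1}(U\times U)$, as required by \re{restr}(d).

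With $c$-invariance in place, \rl{additivity} directly supplies an isomorphism
\[
\Tr([j_c]_!)\oplus\Tr([i_c]_!):\Tr(\Shv(U),[c|_U])\oplus\Tr(\Shv(Z),[c|_Z])\isom\Tr(\Shv(X),[c]),
\]
with inverse $(\Tr([j_c]^*),\Tr([i_c]^*))$. Unwinding the construction of $[f]_!$ from \re{pf}(a) for the morphisms of correspondences $[i_c]$ and $[j_c]$, and using that $c|_Z$ and $c|_U$ are the Frobenius correspondences on $Z$ and $U$, one identifies $\Tr([i_c]_!)$ and $\Tr([j_c]_!)$ with $\Tr(i_!,\Fr)$ and $\Tr(j_!,\Fr)$ respectively. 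This yields the desired isomorphism. There is essentially no obstacle: the corollary is purely a translation of \rl{additivity} into the Frobenius setting of \re{truefrob}.
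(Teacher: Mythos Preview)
Your proof is correct and is precisely the argument the paper intends: the corollary is stated there as ``a particular case of \rl{additivity},'' and your verification that $Z$ is $c$-invariant for $c=(\Fr,\Id)$ together with the identification $\Tr([i_c]_!)=\Tr(i_!,\Fr)$, $\Tr([j_c]_!)=\Tr(j_!,\Fr)$ is exactly what is needed to make that sentence into a proof.
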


First we will show the assertion of \rt{local terms}(b) for $\Shv(-)$. We will carry out the proof in six steps.

\vskip 5truept
{\bf Step 1.}
Note first that the assertion for morphisms $g:Z\to X$ and $f:X\to Y$ implies the assertion for $f\circ g$.
Conversely, if morphism $\Tr(g_!,\Fr)$ is isomorphism, then the assertion for morphisms $g$ and $f\circ g$ implies the assertion for $f$.


\vskip 5truept
{\bf Step 2.} We claim that we can assume that $Y=\pt$. More precisely, the assertion for $f:X\to Y$ is equivalent
to the assertions for morphisms  $f_y:X_y\to\pt$ for all $y\in Y(\fq)$, where $X_y:=X\times_Y \{y\}$.

\begin{proof}
For each $y\in Y(\fq)$, let $\eta_y:\pt\to Y$ and $\wt{\eta}_y:X_y\to X$ be the corresponding morphisms.
We want to show that
\begin{equation} \label{Eq:isomGL}
\eta_y(\fq)^*\circ f(\fq)_!\circ \LT_X^{\naive}\simeq \eta_y(\fq)^*\circ  \LT_Y^{\naive}\circ \Tr(f_!,\Fr).
\end{equation}
The LHS of formula \form{isomGL} is homotopic to
\[
f_y(\fq)_!\circ \wt{\eta}_y(\fq)^*\circ \LT_X^{\naive}\simeq  f_y(\fq)_!\circ \LT_{X_y}^{\naive}\circ \Tr(\wt{\eta}_y^*,\Fr).
\]
The RHS of formula \form{isomGL} is homotopic to
\[
\LT_{\pt}^{\naive}\circ \Tr(\eta_y^*,\Fr)\circ \Tr(f_!,\Fr) \simeq \LT_{\pt}^{\naive}\circ\Tr((f_y)_!,\Fr)\circ \Tr(\wt{\eta}_y^*,\Fr),
\]
by the base change isomorphism $\eta_y^*\circ f_!\simeq (f_y)_!\circ \wt{\eta}_y^*$. Hence homotopy \form{isomGL} would follow
once we establish homotopy
$$f_y(\fq)_!\circ \LT_{X_y}^{\naive}\simeq \LT_{\pt}^{\naive}\circ\Tr((f_y)_!,\Fr).$$
\end{proof}

\vskip 2truept
{\bf Step 3.} Let $Z\subseteq X$ be a closed substack, and let $U\subseteq X$ be the complementary open. Then
the assertion for morphism $f$ is equivalent to the assertions for morphisms $f|_U$ and $f|_Z$.
\begin{proof}
Let $g:U\sqcup Z\to X$ be the canonical map. By \rco{additivity}, the induced map
$$\Tr(g_!,\Fr):\Tr(\Shv(U\sqcup Z),\Fr)\to \Tr(\Shv(X),\Fr)$$ is an isomorphism. Hence, by Step 1, it suffices to show the assertion for morphisms $f\circ g$ and $g$.  Since $g$ is a monomorphism, the assertion for $g$ follows immediately from Step 2. Finally, the
assertion for morphism $f\circ g$ is equivalent to the assertions for morphisms $f|_U$ and $f|_Z$.
\end{proof}

\vskip 2truept
{\bf Step 4.} The assertion holds for representable morphisms.

\begin{proof}
By Step 2, it is enough to treat the case when $X$ is an algebraic space and $Y=\pt$. Then $X$ has an open subspace $U$, which is an affine scheme. Thus, by Step 3 and Noetherian induction, we can assume that $X$ is affine.
In this case, $X$ has a compactification $\ov{X}$, hence using Step 3 again, we can assume that $X$ is projective.
In this case, the assertion for the \emph{true} local terms map follows from \rco{true}(a).
Hence, the assertion for the \emph{naive} local terms map follows from \rt{local terms}(a), which has already been proved.
\end{proof}

\vskip 2truept
{\bf Step 5.} The assertion holds, if $X$ is a classifying stack $BG$ for an algebraic group $G$ and $Y=\pt$.

\begin{proof}
Note that the projection $\pi:\pt\to BG$ is schematic. Therefore the assertion for $\pi$ follows from Step 4.

Assume first that $G$ is connected. In this case, the map $\pi(\fq)_!$ is an isomorphism by Lang's theorem. Since both $\LT^{\true}_{BG}$ and $\LT^{\true}_{\pt}$ are isomorphisms by Remark \re{remtrue}(b), we conclude that $\LT^{\naive}_{BG}$ and $\LT^{\naive}_{\pt}$ are isomorphisms by \rt{local terms}(a), hence the map $\Tr(\pi_!,\Fr)$ is an isomorphism, because  the assertion holds for
$\pi$. Finally, since the assertion trivially holds for the composition $\pt\to BG\to \pt$, the assertion for the projection $BG\to\pt$ now follows from Step 1.

Assume next that $G$ is finite. In this case, there exists an embedding  $G\hra GL_n$. Then the projection $BG\to\pt$ factors as
$BG\to B(GL_n)\to\pt$. By Step 1, it suffices to prove the assertion for $BG\to B(GL_n)$ and $B(GL_n)\to\pt$.
The assertion for $B(GL_n)\to \pt$ follows because $GL_n$ is connected. The assertion for
$BG\to B(GL_n)$ follows from Step 4, because this map is schematic.

In the general case, let $G^0$ be the connected component of the identity of $G$, and let $\pi_0(G)$ be the group of connected components. Then the projection $BG\to \pt$ factors as $BG\to B(\pi_0(G))\to\pt$, and the geometric fiber of the first map is isomorphic to $B(G^0)$.
Therefore the assertion follows from Steps 1 and 2 and the particular cases, proven above.
\end{proof}

\vskip 2truept
{\bf Step 6.} The assertion holds in general.

\begin{proof}
By Step 2, we can assume that $Y=\pt$. Also we can replace $X$ by $X_{\red}$. Recall
that for every reduced algebraic stack $X$ such that $I_X\to X$ is quasi-compact, there exists a (canonical) dense open substack $U\subseteq X$, which is a gerbe (see \cite[Tag~06RC]{Stacks}). Hence, using Step 3 and Noetherian induction, we can assume that $X$ is a gerbe over an algebraic space $X'$. Then the projection $X\to\pt$ decomposes as $X\overset{\pi}{\to} X'\to\pt$. Then, by Steps 1 and 2, the assertion for $X$ follows from  that for $X'$  and geometric fibers of $\pi$. Since every geometric fiber of $\pi$ is isomorphism to $BG$ for some group scheme $G$, the assertion
therefore follows from Steps 4 and 5.
\end{proof}

\vskip 2truept

Finally, we show the assertion for $\Shv(-)^{\ren}$. Arguing as in Step 2, it suffices to show the assertion under an assumption that $Y=\pt$.
Since $f$ was assumed to be safe, we conclude that $X$ is safe, thus, by Remark \re{remsafe}, the functor $\ren_X:\Shv(X)\to\Shv(X)^{\ren}$ is an equivalence. Therefore the assertion follows from the already shown assertion for $\Shv(-)$.
\end{Emp}

\section{Deformation to the normal cone, and proof of \rt{contracting}} \label{S:contracting}
As in \cite{Va}, our proof is based on the observation that the true local term map commutes with specialization.
With future applications in mind, we will work in a slightly more general set-up.

\begin{Emp} \label{E:spec}
{\bf Specialization of correspondences.}
Let $\cD$ be the spectrum of a DVR over $k$, and let $\eta$ and $s$  be the generic and the special points of $\cD$, respectively.
We will consider Artin stacks (and correspondences between them) of finite presentation over $\cD$.

\smallskip

(a) Every correspondence $\wt{c}=(\wt{c}_l,\wt{c}_r):\wt{C}\to \wt{X}\times \wt{X}$ over $\cD$ gives rise to correspondences
\[
\wt{c}_{\eta}=(\wt{c}_{\eta,l},\wt{c}_{\eta,r}):\wt{C}_{\eta}\to \wt{X}_{\eta}\times \wt{X}_{\eta}\text{ over }\eta
\]
and
\[
\wt{c}_{s}=(\wt{c}_{s,l},\wt{c}_{s,r}):\wt{C}_{s}\to \wt{X}_{s}\times \wt{X}_{s}\text{ over }s.
\]

\smallskip

(b) Recall that we have the nearby cycle functor $\Psi_{\wt{X}}:\Shv(\wt{X}_{\eta})^{\constr}\to \Shv(\wt{X}_{s})^{\constr}$, which uniquely extends to a continuous functor $\Psi_{\wt{X}}:\Shv(\wt{X}_{\eta})^{\ren}\to \Shv(\wt{X}_{s})^{\ren}$, and similarly for $\wt{C}$.

\smallskip

(c) We have the base change morphism
\[
\Psi_{\wt{X}}\circ [\wt{c}_{\eta}]= \Psi_{\wt{X}}\circ (\wt{c}_{\eta,l})_*\circ \wt{c}_{\eta,r}^!\to (\wt{c}_{s,l})_*\circ \Psi_{\wt{C}}\circ \wt{c}_{\eta,r}^!\to
(\wt{c}_{s,l})_*\circ \wt{c}_{s,r}^!\circ \Psi_{\wt{X}}=[\wt{c}_{s}]\circ \Psi_{\wt{X}}.
\]
Since $\Psi_{\wt{X}}$ maps constructible sheaves to constructible sheaves, it has a continuous right adjoint, and therefore induces a map of traces
\[
\Tr(\Psi_{\wt{c}}):\Tr(\Shv(\wt{X}_{\eta})^{\ren},[\wt{c}_{\eta}])\to \Tr(\Shv(\wt{X}_{s})^{\ren},[\wt{c}_{s}]).
\]

\smallskip

(d) We also have the natural maps
\[
\Psi_{\on{Fix}(\wt{c})}:\Gm(\on{Fix}(\wt{c}_{\eta}), \om_{\on{Fix}(\wt{c}_{\eta})})\to \Gm(\on{Fix}(\wt{c}_{s}), \Psi_{\on{Fix}(\wt{c})}(\om_{\on{Fix}(\wt{c}_{\eta})}))\to
\Gm(\on{Fix}(\wt{c}_{s}), \om_{\on{Fix}(\wt{c}_{s})}).
\]
\end{Emp}

 \begin{Emp} \label{E:bc} {\bf Extension of scalars.} \hfill

\smallskip

(a) Let $c:C\to X\times X$ be a correspondence, $k'/k$ a field extension. Let $c':C'\to X'\times X'$ be the base change of $C$ to $k'$.
Let $\pi$ denote any of the morphisms $X'\to X$, $C'\to C$, etc.

\smallskip

(b) Since $\pi^*:\Shv(X)\to\Shv(X')$ maps constructible objects to constructible objects, it extends to a continuous functor
$\pi^*:\Shv(X)^{\ren}\to\Shv(X')^{\ren}$ admitting a continuous right adjoint.

\smallskip

(c) We have the base change morphism
\[
\pi^*\circ [c] =\pi^*\circ (c_l)_* \circ c_r^!\simeq (c'_l)_* \circ \pi^* \circ d_r^!\simeq (c'_l)_*\circ c'^!_r \circ \pi^*=[c']\circ\pi^*.
\]

Hence, we obtain a map of traces
\[
\Tr([\pi_c]^*):\Tr(\Shv(X)^{\ren},[c])\to\Tr(\Shv(X')^{\ren},[c']).
\]

\smallskip

(d) We also have a canonically defined map
\[
\pi^*_{\Dt}:\Gm(\on{Fix}(c), \om_{\on{Fix}(c)})\to \Gm(\on{Fix}(c'), \pi^*(\om_{\on{Fix}(c)}))\simeq \Gm(\on{Fix}(c'), \om_{\on{Fix}(c')}).
\]
\end{Emp}

The following result, whose proof will be given in \rs{proofs}, asserts that
true local terms commute with nearby cycles and extension of scalars.

\begin{Thm} \label{T:true2} \hfill

\smallskip

\noindent{\em(a)} In the situation of Section~\re{spec}, the following diagram commutes up to a canonical
homotopy:

\begin{equation*}
\begin{CD}
\Tr(\Shv(\wt{X}_{\eta})^{\ren},[\wt{c}_{\eta}]) @>\LT_{\wt{c}_{\eta}}^{\true}>>
\Gm(\on{Fix}(\wt{c}_{\eta}), \om_{\on{Fix}(\wt{c}_{\eta})})\\
@V\Tr(\Psi_{\wt{c}}) VV @VV\Psi_{\on{Fix}(\wt{c})} V\\
\Tr(\Shv(\wt{X}_{s})^{\ren},[\wt{c}_{s}]) @>\LT_{\wt{c}_{s}}^{\true}>>
\Gm(\on{Fix}(\wt{c}_{s}), \om_{\on{Fix}(\wt{c}_{s})}).
\end{CD}
\end{equation*}
\smallskip

\noindent{\em(b)} In the situation of Section~\re{bc}, the following diagram commutes up to a canonical
homotopy:
\begin{equation*}
\begin{CD}
\Tr(\Shv(X)^{\ren},[c]) @>\LT_{c}^{\true}>>\Gm(\on{Fix}(c), \om_{\on{Fix}(c)})\\
@V\Tr([\pi_c]^*) VV @VV \pi_{\Dt}^*V\\
\Tr(\Shv(X')^{\ren},[c']) @>\LT_{c'}^{\true}>>\Gm(\on{Fix}(c'),\om_{\on{Fix}(c')})
\end{CD}
\end{equation*}
\end{Thm}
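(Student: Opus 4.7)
\textbf{Proof proposal for Theorem \ref{T:true2}.} The strategy for both statements is the same: unwind the definition of $\LT^{\true}_c$ from the lax commutative diagram \form{true} and verify that each of the three $2$-cells $\alpha_l,\alpha_m,\alpha_r$, together with the final base change along $\Dt_X$, is compatible with the operation in question --- nearby cycles $\Psi$ in (a), pullback $\pi^*$ in (b). The desired homotopy is then assembled by pasting these compatibilities together.

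For part (b), the relevant properties of $\pi^*$ are comparatively soft. One checks that $\pi^*$ preserves compact objects (argued as in \rcl{compact}), commutes with $!$-pullbacks and with $\overset{!}\otimes$, is symmetric monoidal for the exterior product, i.e.\ $\pi^*(A\boxtimes B)\simeq \pi^*(A)\boxtimes\pi^*(B)$, commutes with renormalized $*$-pushforwards, and sends $\om_X\mapsto\om_{X'}$ and $\Dt_{X,*}(\om_X)\mapsto\Dt_{X',*}(\om_{X'})$. From this it follows that $\pi^*$ intertwines the units $u_X\mapsto u_{X'}$ (via \rco{rboxtimes}), the evaluation pairings $\ev_X,\ev_{X'}$, and the correspondence functors $[c],[c']$; consequently the lax diagram for $c$ maps to that for $c'$ and all three $2$-cells are preserved. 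A parallel verification identifies the base change isomorphism along $\Dt_X$ with that along $\Dt_{X'}$.

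Part (a) proceeds by the same diagrammatic template with $\Psi$ in place of $\pi^*$, but requires more delicate input. Specifically one needs: (i) $\Psi$ commutes with proper pushforward and with the renormalized $*$-pushforward along safe morphisms; (ii) $\Psi$ commutes with $!$-pullback along maps pulled back from $\cD$, in particular with $\Dt_{\wt X}^!$; (iii) a K\"unneth-type compatibility $\Psi_{\wt X\times_\cD\wt X}\circ\boxtimes\simeq\boxtimes\circ(\Psi_{\wt X}\otimes\Psi_{\wt X})$, and the analogous statement that $\Psi$ respects $\overset{!}\otimes$ on constructibles; and (iv) the normalizations $\Psi(\om_{\wt X_\eta})\simeq\om_{\wt X_s}$ and $\Psi(\Dt_{\wt X_\eta,*}(\om))\simeq\Dt_{\wt X_s,*}(\om)$. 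Granting these, the three $2$-cells $\alpha_l,\alpha_m,\alpha_r$ for $\wt c_\eta$ map to those for $\wt c_s$, and the final base change along the diagonal is intertwined as well.

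The main obstacle will be establishing (iii) and, closely related, showing that $\Psi$ intertwines the self-duality equivalence $\Shv(\wt X_\eta)\simeq\Shv(\wt X_\eta)^\vee$ with its counterpart over $s$. Concretely, this amounts to a compatibility of $\Psi$ with Verdier duality on constructible objects (Beilinson's theorem), extended from schemes to our Artin stacks via the local model $\wt S/G$ used in \rcl{compact}; the K\"unneth formula is then deduced by the same chart-by-chart reduction. Once these compatibilities are in hand, the verification of part (a) reduces to a diagram chase entirely analogous to that of part (b).
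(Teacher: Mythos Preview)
Your proposal is correct and follows essentially the same approach as the paper. The paper organizes the argument by first proving \rt{true}(a) in full detail (constructing the auxiliary lax squares \form{tracepf} and \form{tracepf2}, decomposing the target diagram as in \form{rttruea}, and verifying the two inner squares separately), and then observing that \rt{true2} follows the identical template with $\Psi_{\wt X}\otimes\Psi_{\wt X}$ and $\Psi_{\wt X\times\wt X}$ in place of $f_!\otimes f_!$ and $(f\times f)_!$; the key inputs are exactly those you list --- self-duality of $\Psi$ via commutation with Verdier duality, the K\"unneth compatibility of nearby cycles with exterior products, and compatibility of base change morphisms with compositions (the paper cites \cite[proof of Prop.~1.3.5]{Va} for the latter). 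One small point of phrasing: in your item (i), $\Psi$ does not literally \emph{commute} with renormalized $*$-pushforward --- only a base change transformation exists, which is all that is needed for the lax $2$-cells; the paper is careful to state this as a morphism rather than an isomorphism in \re{spec}(c).
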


We are now going to deduce \rt{contracting} from \rt{true2}.

\begin{Emp} \label{E:def}
{\bf Deformation to the normal cone.} We set $R:=k[t]_{(t)}$ and $\cD:=\Spec(R)$.

\smallskip

(a) Let $Z\subseteq X$ be a closed substack. To a pair $(X,Z)$ one associates a morphism
\[
\phi:\wt{X}_Z\to X_{\cD}:=X\times_k{\cD}
\]
such that $\phi_{\eta}$ is an isomorphism, and $\phi_s$ is the projection $N_Z(X)\to Z\subseteq X$, where $N_Z(X)$ is the (classical) normal cone of $X$ to $Z$.
Namely, in the case of schemes this is the usual construction of deformation to the normal cone (see, for example, \cite[Section~1.4]{Va}), and the extension to Artin stacks is immediate, because the assignment
$(X,Z)\mapsto \wt{X}_Z$ commutes with smooth (even flat) pullbacks.  

\smallskip

(b) Let $c:C\to X\times X$ be a correspondence such that $Z\subseteq X$ is $c$-invariant. By the functoriality of the construction of part (a), we obtain a correspondence
\[
\wt{c}_Z:\wt{C}_{c_r^{-1}(Z)}\to\wt{X}_Z\times \wt{X}_Z
\]
over $\cD$, whose generic fiber is the base change
\[
c_{\eta}:C_{\eta}\to X_{\eta}\times X_{\eta}
\]
of $c$, and the special fiber is the induced correspondence
\[
N_Z(c):N_{c_r^{-1}(Z)}(C)\to N_Z(X)\times N_Z(X)
\]
between the normal cones.

\smallskip

(c) We now use the following key observation (see \cite[Remark~2.1.2]{Va}): the correspondence $c$ is contracting near $Z$ if and only if
the set-theoretic image of the morphism $$N_Z(c_l): N_{c_r^{-1}(Z)}(C)\to N_Z(X)$$ lies in the zero section $Z\subseteq N_Z(X)$.
\end{Emp}

\begin{Emp} \label{E:spec normal cone}
{\bf Specialization to the normal cone.} 
Let us be in the situation of Section~\re{def}.

\smallskip

(a) Denote by $\on{sp}_Z:\Shv(X)^{\ren}\to \Shv(N_Z(X))^{\ren}$ the functor $\A\mapsto\Psi_{\wt{X}_Z}(\A_{\eta})$;

\smallskip

(b) Let $i$ be the closed embedding $Z\hra X$, and let $N_Z(i)$ denote the embedding $Z\hra N_Z(X)$
(whose image is the zero section of $N_Z(X)$). We have a natural transformation of functors
\[
N_Z(i)^*\circ\on{sp}_Z\to i^*:\Shv(X)^{\ren}\to \Shv(Z)^{\ren}
\]
and a theorem of Verdier (see \cite{Ve} or \cite[Proposition~1.4.2]{Va}) asserts that this transformation is an isomorphism.

\smallskip

(c) Combining the constructions of Sections~\re{bc} and \re{spec}, we obtain a map
\[
\Tr(\on{sp}_Z(c)):\Tr(\Shv(X)^{\ren},[c])\to \Tr(\Shv(X_{\eta})^{\ren},[c_{\eta}])\to \Tr(\Shv(N_Z(X))^{\ren},[N_Z(c)])
\]
and a map
\[
\on{sp}_{\on{Fix}(c)}:\Gm(\on{Fix}(c), \om_{\on{Fix}(c)})\to  \Gm(\on{Fix}(c_{\eta}), \om_{\on{Fix}(c_{\eta})})\to
\Gm(\on{Fix}(N_Z(c)), \om_{\on{Fix}(N_Z(c))}).
\]
\end{Emp}

\begin{Emp}
\begin{proof}[Proof of \rt{contracting} assuming \rt{true2}]
The argument will essentially follow that of \cite[Theorem~2.1.3]{Va}.

\smallskip

Using the observation in Section~\re{contracting}(c) and replacing $C$ by an open substack, we can assume that $\on{Fix}(c)_{\red}=\on{Fix}(c|_Z)_{\red}$.
In this case morphism
\[
i_{\Dt}^*:\Gm(\on{Fix}(c), \om_{\on{Fix}(c)})\to \Gm(\on{Fix}(c|_Z), \om_{\on{Fix}(c|_Z)})
\]
is an isomorphism, with inverse $(i_{\Dt})_!$ (see Section~\re{functorfixed}). Therefore it suffices to show that
\begin{equation} \label{Eq:contr1}
\LT^{\true}_c\simeq  (i_{\Dt})_!\circ \LT^{\true}_{c|_Z}\circ \Tr([i_c]^*).
\end{equation}
Hence, by \rt{true}(a), it suffices to show that
\[
\LT^{\true}_{c}\simeq \LT^{\true}_{c}\circ \Tr([i_c]_{!}) \circ \Tr([i_c]^*).
\]
Furthermore, by \rl{additivity}, it suffices to show that
\[
\LT^{\true}_c\circ \Tr([j_c]_!)\simeq 0.
\]

It is shown in the proof of \cite[Theorem~2.1.3]{Va} that the assumptions that $c$ is contracting near $Z$ and that $\on{Fix}(c|_Z)_{\red}=\on{Fix}(c)_{\red}$ imply that
$\on{Fix}(\wt{c}_Z)_{\red}$ is the constant family $\on{Fix}(c)_{\cD,\red}$. Therefore, the  specialization map
\[
\on{sp}_{\on{Fix}(c)}:\Gm(\on{Fix}(c), \om_{\on{Fix}(c)})\to \Gm(\on{Fix}(N_Z(c)), \om_{\on{Fix}(N_Z(c))})
\]
is an isomorphism. Hence, it suffices to show that
\begin{equation} \label{Eq:main equation}
\on{sp}_{\on{Fix}(c)}\circ \LT^{\true}_c\circ \Tr([j_c]_!)\simeq 0.
\end{equation}

Applying \rt{true2}(b) to the morphism $\eta\to\Spec(k)$ and  \rt{true2}(a) to the correspondence $\wt{c}_Z$, we obtain an isomorphism
\[
\on{sp}_{\on{Fix}(c)}\circ \LT^{\true}_c\simeq \LT^{\true}_{N_Z(c)}\circ \Tr(\on{sp}_{Z}(c)).
\]
Therefore it suffices to show that the composition
\[
\Tr(\on{sp}_{Z}(c))\circ \Tr([j_c]_!)\simeq \Tr(\on{sp}_{Z}(c)\circ [j_c]_!):\Tr(\Shv(U)^{\ren},[c|_U])\to\Tr(\Shv(N_Z(X))^{\ren},[N_Z(c)])
\]
is homotopic to zero.

\smallskip

By \rl{trace zero} below, it suffices to show that the composition
\[
(\on{sp}_{Z}\circ j_!)^R\circ [N_Z(c)]:\Shv(N_Z(X))^{\ren}\to \Shv(U)^{\ren}
\]
is homotopic to zero.

\smallskip

Since $c$ is contracting near $Z$, the morphism $N_Z(c_l)_{\red}: N_{c_r^{-1}(Z)}(C)_{\red}\to N_Z(X)$ factors through
$N_Z(i):Z\hra N_Z(X)$ (see Section~\re{def}(c)). Since $[N_Z(c)]=N_Z(c_l)_*\circ N_Z(c_r)^!$, it suffices to show that
\[
(\on{sp}_{Z}\circ j_!)^R\circ N_Z(i)_*\simeq 0,
\]

Passing to left adjoints,
it suffices to show that the composite map $N_Z(i)^*\circ \on{sp}_{Z}\circ j_!$
\[
\Shv(U)^{\ren}\to\Shv(X)^{\ren}\to\Shv(N_Z(X))^{\ren}\to\Shv(Z)^{\ren}
\]
is homotopic to zero. However, by the theorem of Verdier (see Section~\re{spec normal cone}(b)), this composition is homotopic to $i^*\circ j_!\simeq 0$.
\end{proof}
\end{Emp}

\begin{Lem} \label{L:trace zero}
Assume that we are given a lax commutative square

\begin{equation*}
\xy
(0,0)*+{C}="A";
(20,0)*+{C}="B";
(0,-20)*+{D}="C";
(20,-20)*+{D}="D";
{\ar@{->}^{F} "A";"B"};
{\ar@{->}^{t} "A";"C"};
{\ar@{->}^{t} "B";"D"};
{\ar@{->}_{G} "C";"D"};
{\ar@{=>}_\alpha "B";"C"};
\endxy
\end{equation*}
such that $1$-morphism $t$ has a right adjoint $t^R$ and $t^R \circ G\simeq 0$.  Then the induced map between traces
\[
\Tr(\alpha):\Tr(C,F)\to \Tr(D,G)
\]
is homotopic to zero.
\end{Lem}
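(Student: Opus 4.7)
The plan is to factor $\Tr(\alpha)$ through a trace that visibly vanishes, using the adjunction $(t,t^R)$. The lax 2-cell $\alpha : t\circ F \to G \circ t$ has a mate
\[
\tilde\alpha : F \to t^R \circ G \circ t, \qquad \tilde\alpha := t^R(\alpha)\circ (\eta\cdot F),
\]
where $\eta : \Id \to t^R\circ t$ is the unit of the adjunction. Under the hypothesis $t^R\circ G\simeq 0$, we have $t^R\circ G\circ t \simeq 0$, so $\tilde\alpha$ is canonically null-homotopic in the functor category.

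By the general formalism of the categorical trace \cite[Sec.~3.2]{GKRV}, the map $\Tr(\alpha)$ is canonically identified with the composition
\[
\Tr(C,F) \xrightarrow{\;\Tr(\tilde\alpha)\;} \Tr(C,\, t^R\circ G\circ t) \xrightarrow{\;\sim\;} \Tr(D,\, G\circ t\circ t^R) \xrightarrow{\;\Tr(\epsilon\cdot G)\;} \Tr(D,G),
\]
where the middle isomorphism is the cyclic invariance of the trace applied to $t^R$ and $G\circ t$, and the last arrow is induced by the counit $\epsilon : t\circ t^R \to \Id_D$. This is just the standard way of encoding the effect on traces of a lax commutative square whose vertical arrows carry adjunction data.

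Granting this identification, the vanishing of $\tilde\alpha$ forces $\Tr(\tilde\alpha)$ to be canonically null-homotopic, and therefore so is $\Tr(\alpha)$. The only nontrivial point in the argument is the factorization displayed above; this is purely formal, amounting to a Beck--Chevalley-style diagram chase in the $(\infty,1)$-categorical framework of \cite{GKRV}, and does not interact with the specific structure of $\Shv(-)$. I would therefore expect this mate-based factorization to be the main obstacle (really, the main bookkeeping step), after which the conclusion is immediate.
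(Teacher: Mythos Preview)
Your argument is correct and matches the paper's proof essentially verbatim: the paper also writes $\Tr(\alpha)$ as the composition $\Tr(C,F)\to \Tr(C,t^R\circ t\circ F)\overset{\alpha}{\to}\Tr(C,t^R\circ G\circ t)\simeq \Tr(D,G\circ t\circ t^R)\to \Tr(D,G)$ and observes that the middle term vanishes. Your packaging of the first two arrows as $\Tr(\tilde\alpha)$ via the mate is just a relabeling of the same factorization.
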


\begin{proof}
By definition, $\Tr(\alpha)$ is the composition
\[
\Tr(C,F)\to \Tr(C,t^R\circ t\circ F)\overset{\al}{\to}\Tr(C,t^R \circ G \circ t)\simeq \Tr(D,G \circ t\circ t^R)\to \Tr(D,G).
\]
Therefore it is homotopic to zero, because  $t^R \circ G\simeq 0$, thus $\Tr(C,t^R \circ G \circ t)\simeq 0$.
\end{proof}

\section{Proof of Propositions~\ref{P:compatpf} and \ref{P:compatpb}} \label{S:proofs1}

\begin{Emp} \label{E:compatpf1}
\begin{proof}[Proof of \rp{compatpf}]. Since trace maps are compatible with compositions, it suffices to show that the horizontal compositions of the lax-commutative diagrams

\begin{equation*} 
\xy
(10,10)*+{\Shv(X)}="A";
(40,10)*+{\Shv(X)^{\ren}}="B";
(10,-10)*+{\Shv(X)}="C";
(40,-10)*+{\Shv(X)^{\ren}}="D";
(70,10)*+{\Shv(Y)^{\ren}}="E";
(70,-10)*+{\Shv(Y)^{\ren}}="F";
{\ar@{->}^{\ren_X} "A";"B"};
{\ar@{->}^{f_!} "B";"E"};
{\ar@{->}_{\ren_X} "C";"D"};
{\ar@{->}_{f_!} "D";"F"};
{\ar@{->}_{[c]} "A";"C"};
{\ar@{->}_{[c]} "B";"D"};
{\ar@{->}^{[d]} "E";"F"};
{\ar@{=>}_{\al} "C";"B"};
{\ar@{=>}_{[f]_!} "D";"E"}
\endxy
\end{equation*}
and
\begin{equation*} 
\xy
(10,10)*+{\Shv(X)}="A";
(40,10)*+{\Shv(Y)}="B";
(10,-10)*+{\Shv(X)}="C";
(40,-10)*+{\Shv(Y)}="D";
(70,10)*+{\Shv(Y)^{\ren}}="E";
(70,-10)*+{\Shv(Y)^{\ren}}="F";
{\ar@{->}^{f_!} "A";"B"};
{\ar@{->}^{\ren_Y} "B";"E"};
{\ar@{->}_{f_!} "C";"D"};
{\ar@{->}_{\ren_Y} "D";"F"};
{\ar@{->}_{[c]} "A";"C"};
{\ar@{->}_{[d]} "B";"D"};
{\ar@{->}^{[d]} "E";"F"};
{\ar@{=>}_{[f]_!} "C";"B"};
{\ar@{=>}_{\al} "D";"E"}
\endxy
\end{equation*}
are canonically isomorphic. In other words, we have to show that there is a canonical isomorphism
\begin{equation} \label{Eq:isompf}
f_!\circ\ren_X\simeq \ren_Y\circ f_!
\end{equation}
of functors $\Shv(X)\to \Shv(Y)^{\ren}$ making the following diagram homotopy commutative:
\begin{equation} \label{Eq:1}
\begin{CD}
f_!\circ \ren_X\circ [c] @>\al>> f_!\circ [c]\circ \ren_X @>[f]_!>> [d]\circ f_!\circ \ren_X\\
@V\form{isompf}V\sim V @. @V\sim V\form{isompf}V\\
\ren_Y\circ f_! \circ [c] @>[f]_!>> \ren_Y\circ [d]\circ f_!  @>\al>> [d]\circ \ren_Y\circ f_!.
\end{CD}
\end{equation}

To get an isomorphism \form{isompf}, we notice that for every $\A\in\Shv(X)^c$, both $(f_!\circ\ren_X)(\A)$ and
$(\ren_Y\circ f_!)(\A)$ are simply $f_!(\A)\in\Shv(Y)^c\subseteq\Shv(Y)^{\constr}\subseteq\Shv(Y)^{\ren}$.
Alternatively, it can be obtained from the isomorphism $\unren_X\circ f^!\isom f^!\circ\unren_Y$ from \rl{functren}(a) by passing to left  adjoints.

\medskip

Next, unwinding the definitions, diagram \form{1} decomposes as
\begin{equation} \label{Eq:2}
\begin{CD}
f_!\circ \ren_X\circ (c_l)_{\tri}\circ c_r^!  @>\ref{L:functren}(d)>> f_!\circ (c_l)_*\circ\ren_C\circ c_r^! @>\ref{L:functren}(d)>> f_!\circ(c_l)_*\circ c_r^!\circ\ren_X\\
@V\form{isompf} V\sim V  @V\form{BC1}_*VV @VV\form{BC1}_*V \\
\ren_Y\circ f_!\circ (c_l)_{\tri}\circ c_r^!  @. (d_l)_{*}\circ g_!\circ\ren_C\circ c_r^!
 @>\ref{L:functren}(d)>> (d_l)_{*}\circ g_!\circ c_r^!\circ \ren_X\\
@V\form{BC1}VV  @V\form{isompf}V\sim V @VV\text{base change}V \\
\ren_Y\circ (d_l)_{\tri} \circ g_!\circ c_r^! @>\ref{L:functren}(d)>> (d_l)_{*}\circ\ren_D\circ g_!\circ c_r^!
 @. (d_l)_{*}\circ d_r^!\circ f_!\circ \ren_X\\
@V\text{base change}VV  @V\text{base change}VV @V\sim V\form{isompf} V \\
\ren_Y\circ (d_l)_{\tri}\circ d_r^!\circ f_! @>\ref{L:functren}(d)>> (d_l)_{*}\circ\ren_D\circ d_r^!\circ f_! @>\ref{L:functren}(d)>> (d_l)_{*}\circ d_r^!\circ \ren_Y\circ f_!,
\end{CD}
\end{equation}
where by $\form{BC1}_*$ we denote the (easier) version of morphism $\form{BC1}$, where ${-}_{\tri}$ is replaced by ${-}_*$.

\medskip

It remains to show that all inner diagrams of diagram \form{2} are homotopy commutative. This is clear for the top right and the bottom left inner diagrams. For the remaining ones, it remains to show that the following two diagrams are homotopy commutative:
\begin{equation} \label{Eq:3}
\begin{CD}
\ren_Y\circ f_!\circ (c_l)_{\tri}  @>\form{isompf}>\sim> f_!\circ \ren_X\circ (c_l)_{\tri}  @>\ref{L:functren}(d)>> f_!\circ (c_l)_*\circ\ren_C \\
@V\form{BC1}VV @. @V\form{BC1}_*VV \\
\ren_Y\circ (d_l)_{\tri} \circ g_! @>\ref{L:functren}(d)>> (d_l)_{*}\circ\ren_D\circ g_! @>\form{isompf}>\sim>
(d_l)_{*}\circ g_!\circ\ren_C
\end{CD}
\end{equation}
\begin{equation} \label{Eq:4}
\begin{CD}
 \ren_D\circ g_!\circ c_r^! @>\form{isompf}>\sim>  g_!\circ\ren_C\circ c_r^! @>\ref{L:functren}(d)>>  g_!\circ c_r^!\circ \ren_X\\
  @V\text{base change}VV @.  @VV\text{base change}V \\
\ren_D\circ d_r^!\circ f_! @>\ref{L:functren}(d)>>  d_r^!\circ \ren_Y\circ f_!  @ >\form{isompf}>\sim>  d_r^!\circ f_!\circ \ren_X.
\end{CD}
\end{equation}

\medskip

By adjunction, the homotopy commutativity of diagram \form{4} follows from the fact that isomorphisms from \rl{functren}(a) are compatible with compositions.

\smallskip

Next, decomposing the left inner square of diagram \form{morph} as
\begin{equation} \label{Eq:morphleft}
\begin{CD}
X @<\wt{d}_l<< C_l:=X\times_Y D @<p<< C\\
@VfVV        @V\wt{f}VV  @VVgV\\
Y @<d_l<< D @= D
\end{CD}
\end{equation}
and unwinding the definition, diagram \form{3} decomposes as
\begin{equation} \label{Eq:3'}
\begin{CD}
\ren_Y\circ f_!\circ (c_l)_{\tri}  @>\form{isompf}>\sim> f_!\circ \ren_X\circ (c_l)_{\tri}  @>\ref{L:functren}(d)>> f_!\circ (c_l)_*\circ\ren_C \\
@A\sim A\re{ren}(c)A @A\sim A\re{ren}(c)A @A\sim AA \\
\ren_Y\circ f_!\circ (\wt{d}_l)_{\tri}\circ p_{\tri}  @>\form{isompf}>\sim> f_!\circ \ren_X\circ (\wt{d}_l)_{\tri}\circ p_{\tri}  @>\ref{L:functren}(d)\circ\ref{L:functren}(d)>> f_!\circ (\wt{d}_l)_*\circ p_*\circ\ren_C \\
@VV\text{base change}V @. @V\text{base change}VV \\
\ren_Y\circ ({d}_l)_{\tri}\circ \wt{f}_!\circ p_{\tri}  @>\ref{L:functren}(d)>> ({d}_l)_{*}\circ\ren_D\circ \wt{f}_!\circ p_{\tri}
  @>\ref{L:functren}(d)\circ \form{isompf}>> (d_l)_*\circ \wt{f}_!\circ p_*\circ\ren_C \\
@V\sim V\re{safe}(d)\circ\ref{C:safe}V @V\sim V\re{safe}(d)\circ\ref{C:safe}V @V\ref{C:safe}V\sim V \\
\ren_Y\circ (d_l)_{\tri} \circ g_! @>\ref{L:functren}(d)>> (d_l)_{*}\circ\ren_D\circ g_! @>\form{isompf}>\sim>
(d_l)_{*}\circ g_!\circ\ren_C.
\end{CD}
\end{equation}

Thus, it remains to show that all inner squares of diagram \form{3'} are homotopy commutative.  This is clear for the top left and the bottom left inner squares.

\smallskip

The assertion for the top right inner square follows from the fact that isomorphisms of \rl{functren}(c) are compatible with compositions, while for the assertion for the bottom right square we also observe that the diagram
\begin{equation*}
\begin{CD}
\ren_{C_l}\circ p_!  @>\form{isompf}>\sim> p_!\circ \ren_C \\
@V\re{safe}(d)\circ\ref{C:safe}V\sim V  @V\ref{C:safe}V\sim V \\
\ren_{C_l}\circ p_{\tri} @>\ref{L:functren}(d)>> p_{*}\circ\ren_C
\end{CD}
\end{equation*}
is homotopy commutative.

\smallskip

Finally, to see the commutativity of the middle inner square of diagram \form{3'}, we have to show the commutativity of the diagram

\begin{equation} \label{Eq:5}
\begin{CD}
\ren_X\circ (\wt{d}_l)_{\tri}\circ \wt{f}^! @>\ref{L:functren}(d)>> (\wt{d}_l)_{*}\circ\ren_{C_l}\circ\wt{f}^!  @>\ref{L:functren}(d)>>
(\wt{d}_l)_{*}\circ \wt{f}^!\circ \ren_{D}\\
@V\re{ren}(d) VV @. @V\sim V\text{base change}V\\
\ren_X\circ f^!\circ (d_l)_{\tri}  @>\ref{L:functren}(d)>> f^!\circ \ren_Y\circ (d_l)_{\tri} @>\ref{L:functren}(d)>>
f^!\circ (d_l)_{*}\circ \ren_{D},
\end{CD}
\end{equation}
which follows from \rco{bcren}(b).
\end{proof}
\end{Emp}


\begin{Emp}
\begin{proof}[Proof of \rp{compatpb}]
Arguing as in the proof of \rp{compatpf} but interchanging $X$ with $Y$, $C$ with $D$, $c$ with $d$, $f_!$ with $f^*$, $g_!$ with $g^*$, $[f]_!$ with $[f]^*$ and isomorphism \form{isompf} with \form{isompb} in all places,
%
%
we end up showing the homotopy commutativity of diagrams

\begin{equation} \label{Eq:6}
\begin{CD}
\ren_X\circ f^*\circ (d_l)_{\tri}  @>\form{isompb}>\sim> f^*\circ \ren_Y\circ (d_l)_{\tri}  @>\ref{L:functren}(d)>> f^*\circ (d_l)_*\circ\ren_D \\
@V\form{BC2}VV @. @VV\text{base change}V \\
\ren_X\circ (c_l)_{\tri} \circ g^* @>\ref{L:functren}(d)>> (c_l)_{*}\circ\ren_C\circ g^* @>\form{isompb}>\sim>
(c_l)_{*}\circ g^*\circ\ren_D
\end{CD}
\end{equation}
and
\begin{equation} \label{Eq:7}
\begin{CD}
 \ren_C\circ g^*\circ d_r^! @>\form{isompb}>\sim>  g^*\circ\ren_D\circ d_r^! @>\ref{L:functren}(d)>>  g^*\circ d_r^!\circ \ren_Y\\
  @V\text{base change}VV @.  @VV\text{base change}V \\
\ren_C\circ c_r^!\circ f^* @>\ref{L:functren}(d)>>  c_r^!\circ \ren_X\circ f^*  @ >\form{isompb}>\sim>  c_r^!\circ f^*\circ \ren_Y,
\end{CD}
\end{equation}
which replace diagrams \form{3} and \form{4} in our case.

\smallskip


The homotopy commutativity of diagram \form{6} follows from the fact that morphism \form{BC2} is induced by the base  change morphism $f^*\circ (d_l)_*\to (c_l)_* \circ g^*$ of functors $\Shv(D)^{\ren}\to \Shv(X)^{\ren}$.

\smallskip

Next, by adjointness, for the homotopy commutativity of diagram \form{7} it suffices to show the homotopy commutativity of a diagram
\begin{equation} \label{Eq:8}
\begin{CD}
 g^*\circ d_r^!\circ\unren_Y @>\ref{L:functren}(a)>\sim>  g^*\circ\unren_D\circ d_r^! @>\ref{L:functren}(a)>\sim>  \unren_C\circ g^*\circ d_r^!\\
  @V\form{bc}VV @.  @VV\form{bc}V \\
c_r^!\circ f^*\circ\unren_Y @>\ref{L:functren}(a)>\sim>  c_r^!\circ \unren_X\circ f^*  @ >\ref{L:functren}(a)>\sim>  \unren_C\circ c_r^!\circ f^*.
\end{CD}
\end{equation}

Finally, the homotopy commutativity of diagram \form{8} follows from the fact that both the Gysin map and the base change isomorphism
in $\Shv(-)^{\ren}$ are induced by the Gysin map and the base change isomorphism in $\Shv(-)$ (see the proof of \rco{bcren}(a)).
 \end{proof}
\end{Emp}

\section{Completion of proofs, I} \label{S:proofs}
In this section we will prove Theorems~\ref{T:true}(a), \ref{T:reftrue}(a), \ref{T:true2}(a),(b) and \rp{refined}.  Since arguments are similar in all cases, we will first describe the proof of \rt{reftrue}(a) in detail, and then indicate what changes are needed to treat the other cases.

\vskip 5truept
\noindent{\bf Proof of \rt{reftrue}(a).}

\begin{Emp} \label{E:pftruea1}
(a) Recall that since $f:X\to Y$ is proper and safe, the functor $f_!:\Shv(X)\to \Shv(Y)$ is {\em self-dual}.
Namely, under identifications
\[
\Shv(X)\simeq\Shv(X)^{\vee}\text{ and }\Shv(Y)\simeq\Shv(Y)^{\vee}
\]
from Section~\re{reftrue}(a), the dual functor
$(f_!^R)^{\vee}:\Shv(X)^{\vee}\to \Shv(Y)^{\vee}$ is naturally identified with $f_!:\Shv(X)\to \Shv(Y)$. Indeed, this is equivalent to the assertion $(\B{D}_Y\circ f_!\circ \B{D}_X)(\A)\simeq f_!(\A)$ for all $\A\in\Shv(X)^c$, and hence follows \rco{safe}.

\smallskip

(b) Using part~(a), one checks that
the map
\[
\Tr([f]_!):\Tr(\Shv(X),[c])\to\Tr(\Shv(Y),[d])
\]
is induced by the lax commutative square
\begin{equation} \label{Eq:tracepf}
\xy
(0,10)*+{\Vect}="A";
(40,10)*+{\Shv(X)\otimes \Shv(X)}="B";
(80,10)*+{\Shv(X)\otimes \Shv(X)}="C";
(120,10)*+{\Vect}="D";
(0,-10)*+{\Vect}="A'";
(40,-10)*+{\Shv(Y)\otimes \Shv(Y)}="B'";
(80,-10)*+{\Shv(Y)\otimes \Shv(Y)}="C'";
(120,-10)*+{\Vect,}="D'";
{\ar@{->}^{u_X} "A";"B"};
{\ar@{->}_{u_Y} "A'";"B'"};
{\ar@{->}^{\ev_X} "C";"D"};
{\ar@{->}_{\ev_Y} "C'";"D'"};
{\ar@{->}^{[c]\otimes\Id} "B";"C"};
{\ar@{->}_{[d]\otimes\Id} "B'";"C'"};
{\ar@{->}^{\on{id}} "A";"A'"};
{\ar@{->}^{\on{id}} "D";"D'"};
{\ar@{->}^{f_!\otimes f_!} "B";"B'"};
{\ar@{->}^{f_!\otimes f_!} "C";"C'"};
{\ar@{=>}_{\al_l} "B";"A'"};
{\ar@{=>}_{\al_m} "C";"B'"};
{\ar@{=>}_{\al_r} "D";"C'"};
\endxy
\end{equation}
where

\smallskip

$\bullet$ $\al_l$ corresponds to the morphism
\[
(f_!\otimes f_!)(u_X)\to u_{Y}\simeq\boxtimes^R((\Dt_Y)_*(\om_Y)),
\]
obtained by adjunction from the composition
\[
\boxtimes((f_!\otimes f_!)(u_X))\simeq (f\times f)_!(\boxtimes(u_X))\to (f\times f)_!((\Dt_X)_*(\om_X))\simeq(\Dt_Y)_*(f_!(\om_X))\to (\Dt_Y)_*(\om_Y),
\]
induced by natural maps $\boxtimes(u_X)\to (\Dt_X)_*(\om_X)$ and $f_!(\om_X)\to\om_Y$;

\smallskip

$\bullet$ $\al_m$ corresponds to the morphism $[f]_!:f_!\circ [c]\to [d]\circ f_!$ from Section \re{pf}(a);

\smallskip

$\bullet$ $\al_r$ is the canonical morphism

\[
\Gm_{\tri}(X,-\overset{!}{\otimes}-)\simeq \Gm_{\tri}(Y,f_!(-\overset{!}{\otimes}-))\to \Gm_{\tri}(Y,f_!(-)\overset{!}{\otimes}f_!(-)),
\]
induced by the canonical morphism $f_!(-\overset{!}{\otimes}-)\to f_!(-)\overset{!}{\otimes}f_!(-)$.
\end{Emp}

\begin{Emp} \label{E:pftruea2}
Consider the lax commutative square
\begin{equation} \label{Eq:tracepf2}
\xy
(0,10)*+{\Vect}="A";
(40,10)*+{\Shv(X\times X)}="B";
(80,10)*+{\Shv(X\times X)}="C";
(120,10)*+{\Vect}="D";
(0,-10)*+{\Vect}="A'";
(40,-10)*+{\Shv(Y\times Y)}="B'";
(80,-10)*+{\Shv(Y\times Y)}="C'";
(120,-10)*+{\Vect,}="D'";
{\ar@{->}^{(\Dt_X)_*(\om_X)} "A";"B"};
{\ar@{->}_{(\Dt_Y)_*(\om_Y)} "A'";"B'"};
{\ar@{->}^{\Gm_{\tri}\circ\Dt_X^!} "C";"D"};
{\ar@{->}_{\Gm_{\tri}\circ\Dt_Y^!} "C'";"D'"};
{\ar@{->}^{[c\times\Id]} "B";"C"};
{\ar@{->}_{[d\otimes\Id]} "B'";"C'"};
{\ar@{->}^{\on{id}} "A";"A'"};
{\ar@{->}^{\on{id}} "D";"D'"};
{\ar@{->}^{(f\times f)_!} "B";"B'"};
{\ar@{->}^{(f\times f)_!} "C";"C'"};
{\ar@{=>}_{\al_l} "B";"A'"};
{\ar@{=>}_{\al_m} "C";"B'"};
{\ar@{=>}_{\al_r} "D";"C'"};
\endxy
\end{equation}
where

\smallskip

$\bullet$ $\al_l$ corresponds to the morphism
\[
(f\times f)_!((\Dt_X)_*(\om_X))\simeq(\Dt_Y)_*(f_!(\om_X))\to (\Dt_Y)_*(\om_Y)
\]
(similar to the corresponding map in diagram \form{tracepf});

\smallskip

$\bullet$ $\al_m$ is the morphism $(f\times f)_!\circ [c\times\Id]\to [d\times \Id]\circ (f\times f)_!$, given by the morphism
of correspondences $(f\times f,g\times f):c\times\Id_X\to d\times \Id_Y$ as in Section \re{pf}(a);

\smallskip

$\bullet$ $\al_r$ is the canonical morphism
\[
\Gm_{\tri}(X,\Dt_X^!(-))\simeq \Gm_{\tri}(Y,(f_!\circ \Dt_X^!)(-))\to\Gm_{\tri}(Y,(\Dt_Y^!\circ (f\times f)_!)(-)),
\]
induced by the base change morphism $f_!\circ \Dt_X^!\to\Dt_Y^!\circ (f\times f)_!$.
\end{Emp}

\begin{Emp} \label{E:pftruea3}
We can decompose the diagram of \rt{reftrue}(a) as
\begin{equation} \label{Eq:rttruea}
\begin{CD}
\Tr(\Shv(X),[c])@>\form{reftrue}_c>> \Gm_{\tri}(X, (\Dt_X^!\circ [c\times\Id]\circ (\Dt_X)_*)(\om_X))@>{\text{base change}}>> \Gm_{\tri}(\on{Fix}(c), \om_{\on{Fix}(c)})\\
@VV\Tr([f]_!)V                      @V\form{tracepf2}VV                      @VV(g_{\Dt})_!V\\
\Tr(\Shv(Y),[d])@>>\form{reftrue}_d> \Gm_{\tri}(Y, (\Dt_Y^!\circ [d\times\Id]\circ (\Dt_Y)_*)(\om_Y))@>>{\text{base change}}> \Gm_{\tri}(\on{Fix}(d), \om_{\on{Fix}(d)}),
\end{CD}
\end{equation}
where the top and the bottom arrows are the compositions from Section \re{reftrue}(c) for $c$ and $d$, respectively, while the middle vertical arrow is induced by the lax commutative square \form{tracepf2}. Therefore, it suffices to show that both inner squares of diagram \form{rttruea} are canonically homotopy commutative.
\end{Emp}

\begin{Emp} \label{E:pftruea4}
In order to verify the commutativity of the left inner square of diagram \form{rttruea}, it suffices to show that the two maps
$$\Tr(\Shv(X),[c])\rightrightarrows \Gm_{\tri}(Y, (\Dt_Y^!\circ [d\times\Id]\circ(\Dt_Y)_*)(\om_Y))$$
that arise from the following two lax commutative diagrams are canonically homotopic.


\begin{equation} \label{Eq:composition1}
\xy
(0,20)*+{\Vect}="A";
(40,20)*+{\Shv(X)\otimes \Shv(X)}="B";
(80,20)*+{\Shv(X)\otimes \Shv(X)}="C";
(120,20)*+{\Vect}="D";
(0,0)*+{\Vect}="A'";
(40,0)*+{\Shv(X\times X)}="B'";
(80,0)*+{\Shv(X\times X)}="C'";
(120,0)*+{\Vect}="D'";
{\ar@{->}^{u_X} "A";"B"};
{\ar@{->}^{(\Dt_X)_*(\om_X)} "A'";"B'"};
{\ar@{->}^{\ev_X} "C";"D"};
{\ar@{->}^{\Gm_{\tri}\circ\Dt_X^!} "C'";"D'"};
{\ar@{->}^{[c]\otimes\Id} "B";"C"};
{\ar@{->}^{[c\times\Id]} "B'";"C'"};
{\ar@{->}^{\on{id}} "A";"A'"};
{\ar@{->}^{\on{id}} "D";"D'"};
{\ar@{->}^{\boxtimes} "B";"B'"};
{\ar@{->}^{\boxtimes} "C";"C'"};
{\ar@{=>} "B";"A'"};
{\ar@{=>} "C";"B'"};
{\ar@{=>} "D";"C'"};
(0,-20)*+{\Vect}="A''";
(40,-20)*+{\Shv(Y\times Y)}="B''";
(80,-20)*+{\Shv(Y\times Y)}="C''";
(120,-20)*+{\Vect,}="D''";
{\ar@{->}_{(\Dt_Y)_*(\om_Y)} "A''";"B''"};
{\ar@{->}_{\Gm_{\tri}\circ\Dt_Y^!} "C''";"D''"};
{\ar@{->}_{[d\otimes\Id]} "B''";"C''"};
{\ar@{->}^{\on{id}} "A'";"A''"};
{\ar@{->}^{\on{id}} "D'";"D''"};
{\ar@{->}^{(f\times f)_!} "B'";"B''"};
{\ar@{->}^{(f\times f)_!} "C'";"C''"};
{\ar@{=>} "B'";"A''"};
{\ar@{=>} "C'";"B''"};
{\ar@{=>} "D'";"C''"};
\endxy
\end{equation}

and

\begin{equation} \label{Eq:composition2}
\xy
(0,20)*+{\Vect}="A";
(40,20)*+{\Shv(X)\otimes \Shv(X)}="B";
(80,20)*+{\Shv(X)\otimes \Shv(X)}="C";
(120,20)*+{\Vect}="D";
(0,0)*+{\Vect}="A'";
(40,0)*+{\Shv(Y)\otimes \Shv(Y)}="B'";
(80,0)*+{\Shv(Y)\otimes \Shv(Y)}="C'";
(120,0)*+{\Vect,}="D'";
{\ar@{->}^{u_X} "A";"B"};
{\ar@{->}^{u_Y} "A'";"B'"};
{\ar@{->}^{\ev_X} "C";"D"};
{\ar@{->}^{\ev_Y} "C'";"D'"};
{\ar@{->}^{[c]\otimes\Id} "B";"C"};
{\ar@{->}^{[d]\otimes\Id} "B'";"C'"};
{\ar@{->}^{\on{id}} "A";"A'"};
{\ar@{->}^{\on{id}} "D";"D'"};
{\ar@{->}^{f_!\otimes f_!} "B";"B'"};
{\ar@{->}^{f_!\otimes f_!} "C";"C'"};
{\ar@{=>} "B";"A'"};
{\ar@{=>} "C";"B'"};
{\ar@{=>} "D";"C'"};
(0,-20)*+{\Vect}="A''";
(40,-20)*+{\Shv(Y\times Y)}="B''";
(80,-20)*+{\Shv(Y\times Y)}="C''";
(120,-20)*+{\Vect}="D''";
{\ar@{->}_{(\Dt_Y)_*(\om_Y)} "A''";"B''"};
{\ar@{->}_{\Gm_{\tri}\circ\Dt_Y^!} "C''";"D''"};
{\ar@{->}_{[d\times\Id]} "B''";"C''"};
{\ar@{->}^{\on{id}} "A'";"A''"};
{\ar@{->}^{\on{id}} "D'";"D''"};
{\ar@{->}^{\boxtimes} "B'";"B''"};
{\ar@{->}^{\boxtimes} "C'";"C''"};
{\ar@{=>} "B'";"A''"};
{\ar@{=>} "C'";"B''"};
{\ar@{=>} "D'";"C''"};
\endxy
\end{equation}
For this it suffices to establish that the vertical composition of the left (resp. middle, resp. right) inner square of diagram
\form{composition1} is canonically homotopic to the vertical composition of the corresponding inner square of diagram \form{composition2}. But all there three assertions are straightforward.
\end{Emp}

\begin{Emp} \label{E:pftruea5}
It remains to show the commutativity of the right inner square of diagram \form{rttruea}. This follows from the compatibility
of base change morphisms with compositions (see \cite[proof of Proposition~1.2.5]{Va}).

\end{Emp}

\begin{Emp} \label{E:pftruea}
{\bf Proof of \rt{true}(a).} The proof is almost identical to that of \rt{reftrue}(a), except we have to replace $\Shv(-)$, $\Shv(-)^c$, $u_{-}$, $\om_{-}$ and $\Gm_{\tri}$ by $\Shv(-)^{\ren}$, $\Shv(-)^{\constr}$, $u^{\ren}_{-}$, $\om^{\ren}_{-}$ and $\Gm$, respectively, in all places:

\smallskip

(i) Arguing as in Section~\re{pftruea1}, we see that
the functor $f_!:\Shv(X)^{\ren}\to\Shv(Y)^{\ren}$ is self-dual. Therefore the trace map
\[
\Tr([f]_!):\Tr(\Shv(X)^{\ren},[c])\to\Tr(\Shv(Y)^{\ren},[c])
\]
is induced by a lax commutative square, obtained from diagram~\form{tracepf} by the replacement mentioned above.

\smallskip

(ii) Next, the diagram of \rt{true}(a) decomposes in a similar manner as  diagram~\form{rttruea}, where the top and the bottom arrows are the compositions from Section~\re{truelocal}(b) for $c$ and $d$, respectively, while the middle vertical arrow is induced by a lax commutative square, obtained from  diagram~\form{tracepf2} by the replacement mentioned above. It  remains to show that both inner squares are homotopy commutative.

\smallskip

(iii) Again, to show commutativity of the left inner square, it suffices to show that vertical compositions
of the corresponding lax commutative squares are homotopic, which is a routine verification, while the commutativity of the right inner square follows from the fact that base change morphisms, are compatible with compositions.
\end{Emp}

\begin{Emp}
{\bf Proof of \rt{true2}.} We will only discuss the proof of part~(a), because the proof of part~(b) is similar but easier.

\smallskip

(i) Since functor $\Psi_{\wt{X}}$ commutes with the Verdier duality on constructible objects, it is self-dual.
Therefore the trace map
\[
\Tr(\Psi_{\wt{c}}):\Tr(\Shv(\wt{X}_{\eta})^{\ren},[\wt{c}_{\eta}])\to\Tr(\Shv(\wt{X}_{s})^{\ren},[\wt{c}_{s}])
\]
is induced by a lax commutative square, similar to diagram~\form{tracepf}, in which, in addition to replacements of Section~\re{pftruea}, $X,c$, $Y,d$ and $f_!\otimes f_!$ are replaced by
$\wt{X}_{\eta},\wt{c}_{\eta}$, $\wt{X}_s,\wt{c}_s$ and $\Psi_{\wt{X}}\otimes \Psi_{\wt{X}}$, respectively (and $2$-morphisms are modified appropriately).

\smallskip

(ii) Next, the diagram of \rt{true2}(a) decomposes in a similar manner as  diagram~\form{rttruea}, where the middle vertical
arrow is induced by a lax commutative square, similar to diagram~\form{tracepf2}, in which $(f\times f)_!$,
 are replaced by $\Psi_{\wt{X}\times\wt{X}}$. It  remains to show that both inner squares are homotopy commutative.

\smallskip

(iii) Again, to show commutativity of the left inner square, it suffices to show that vertical compositions
of the corresponding lax commutative squares are homotopic. In this case, it is a routine verification, which uses the
fact that nearby cycles commute with exterior products. Finally, the commutativity of the right inner square follows from the fact that
base change morphisms, corresponding to Cartesian squares, commute with compositions (see \cite[proof of Proposition~1.3.5]{Va}).
\end{Emp}

\begin{Emp}
{\bf Proof of \rp{refined}.}

\smallskip

(i) Since $(\B{D}_X\circ\ren_X\circ \B{D}_X)(\A)\simeq\ren_X(\A)$ for every $\A\in\Shv(X)^c$, we conclude that the functor
 $\ren_X:\Shv(X)\to\Shv(Y)^{\ren}$ is self-dual. Therefore the trace map
\[
\Tr(\ren_X,[c]):\Tr(\Shv(X),[c])\to\Tr(\Shv(X)^{\ren},[c])
\]
is induced by a lax commutative square, similar to diagram~\form{tracepf}, in which $\Shv(Y)$, $f_!$, $u_{Y}$, $[d]$ and $\ev_Y$ are replaced by
$\Shv(X)^{\ren}$, $\ren_X$, $u^{\ren}_{X}$, $[c]$ and $\ev_X$, respectively.

\smallskip

(ii) Next, diagram \form{refined} of \rp{refined} decomposes in a similar manner as  diagram~\form{rttruea}, where the middle vertical
arrow is induced by a lax commutative square, similar to diagram~\form{tracepf2}, in which $\Shv(Y\times Y)$, $(f\times f)_!$,
$(\Dt_Y)_*(\om_Y)$ and $\Gm_{\tri}\circ\Dt_Y^!$ are replaced by $\Shv(X\times X)^{\ren}$, $\ren_{X\times X}$,
$(\Dt_X)_*(\om^{\ren}_X)$ and $\Gm\circ\Dt_X^!$, respectively. It  remains to show that both inner squares are homotopy commutative.

\smallskip

(iii) Again, to show commutativity of the left inner square, it suffices to show that vertical compositions
of the corresponding lax commutative squares are homotopic, which is a routine verification, which uses the
fact renormalization functors commute with exterior products. Finally, the commutativity of the right inner square follows from
\rco{bcren}(b) asserting that renormalization functors commute with base change morphisms.
\end{Emp}

\section{Completion of proofs, II} \label{S:proofs2}
In this section we will prove Theorems~\ref{T:true}(b) and \ref{T:reftrue}(b). Moreover, we will restrict ourselves with the proof of
\rt{reftrue}(b), because the proof of \rt{true}(b) can be obtained from it using the same modification as in Section~\re{pftruea}.
Although the overall strategy is similar to that of \rt{reftrue}(a), some ingredients are new.

\begin{Emp}
{\bf The $(*,!)$-pullback.} Consider the functor
\[
f^*\boxtimes f^!:\Shv(Y\times Y)\to \Shv(X\times X),
\]
defined as a composition of $(f\times\Id)^*$ and $(\Id\times f)^!$ in either order. Namely, there is a canonical base change morphism  of functors
\[
(f\times\Id_X)^*\circ(\Id_Y\times f)^!\to (\Id_X\times f)^!\circ(f\times\Id_Y)^*,
\]
which is an isomorphism because $f$ is smooth.
\end{Emp}

\begin{Emp} \label{E:pftrueb2}
Consider a lax commutative square
\begin{equation} \label{Eq:tracepb2}
\xy
(0,10)*+{\Vect}="A";
(40,10)*+{\Shv(Y\times Y)}="B";
(80,10)*+{\Shv(Y\times Y)}="C";
(120,10)*+{\Vect}="D";
(0,-10)*+{\Vect}="A'";
(40,-10)*+{\Shv(X\times X)}="B'";
(80,-10)*+{\Shv(X\times X)}="C'";
(120,-10)*+{\Vect,}="D'";
{\ar@{->}^{(\Dt_Y)_*(\om_Y)} "A";"B"};
{\ar@{->}_{(\Dt_X)_*(\om_X)} "A'";"B'"};
{\ar@{->}^{\Gm_{\tri}\circ\Dt_Y^!} "C";"D"};
{\ar@{->}_{\Gm_{\tri}\circ\Dt_X^!} "C'";"D'"};
{\ar@{->}^{[d\times\Id]} "B";"C"};
{\ar@{->}_{[c\times\Id]} "B'";"C'"};
{\ar@{->}^{\on{id}} "A";"A'"};
{\ar@{->}^{\on{id}} "D";"D'"};
{\ar@{->}^{f^*\pp f^!} "B";"B'"};
{\ar@{->}^{f^*\pp f^!} "C";"C'"};
{\ar@{=>}_{\al_l} "B";"A'"};
{\ar@{=>}_{\al_m} "C";"B'"};
{\ar@{=>}_{\al_r} "D";"C'"};
\endxy
\end{equation}
where

\smallskip

$\bullet$ $\al_l$ corresponds to the morphism
\begin{equation} \label{Eq:map 1}
(f\times\Id_X)^*(\Id_Y\times  f)^!((\Dt_Y)_*(\om_Y))\simeq
(f\times\Id_X)^*(f\times\Id_X)_*(\Dt_X)_*(f^!(\om_Y)))\overset{\on{counit}}{\lra} (\Dt_X)_*(\om_X),
\end{equation}
obtained from the base change isomorphism $(\Id_Y\times  f)^!\circ(\Dt_Y)_*\simeq (f\times\Id_X)_*\circ(\Dt_X)_*\circ f^!$, corresponding to the Cartesian diagram
\[
\begin{CD}
X @>(f,\Id_X)>> Y\times X\\
@VfVV @VV\Id_Y\times fV\\
Y @>\Dt_Y>> Y\times Y;
\end{CD}
\]

\smallskip

$\bullet$ $\al_m$ is the morphism $(f^*\pp f^!)\circ [d\times\Id]\to [c\times \Id]\circ (f^*\times f^!)$, induced by the morphism
$(f\times\Id)^*\circ[d\times\Id]\to[c\times\Id]\circ (f\times\Id)^*$ (see Section~\re{pb}(a));

\smallskip

$\bullet$ $\al_r$ is the canonical morphism

\[
\Gm_{\tri}(Y,\Dt_Y^!(-))\overset{f^*}{\lra} \Gm_{\tri}(X,(f^*\circ\Dt_Y^!)(-))\to \Gm_{\tri}(X,(\Dt_X^!\circ(f^*\pp f^!))(-)),
\]
induced by the base change morphism
\[
f^*\circ\Dt_Y^!\to\Dt_X^!\circ(\Id_X\times f)^!\circ(f\times\Id_Y)^*\simeq \Dt_X^!\circ(f^*\pp f^!),
\]
corresponding to the Cartesian diagram
\[
\begin{CD}
X @>(\Id_X,f)>> X\times Y\\
@VfVV @VVf\times\Id_Y V\\
Y @>\Dt_Y>> Y\times Y.
\end{CD}
\]
\end{Emp}

\begin{Emp}
We have an identification $(\B{D}_X \circ f^*\circ \B{D}_Y)(\A)\simeq f^!(\A)$ for every  $\A\in\Shv(Y)^c$. Thus, unwinding the definitions, the trace map
\[
\Tr([f]^*):\Tr(\Shv(Y),[d])\to\Tr(\Shv(X),[c])
\]
is induced by a lax commutative square
\begin{equation} \label{Eq:tracepb}
\xy
(0,10)*+{\Vect}="A";
(40,10)*+{\Shv(Y)\otimes \Shv(Y)}="B";
(80,10)*+{\Shv(Y)\otimes \Shv(Y)}="C";
(120,10)*+{\Vect}="D";
(0,-10)*+{\Vect}="A'";
(40,-10)*+{\Shv(X)\otimes \Shv(X)}="B'";
(80,-10)*+{\Shv(X)\otimes \Shv(X)}="C'";
(120,-10)*+{\Vect,}="D'";
{\ar@{->}^{u_Y} "A";"B"};
{\ar@{->}_{u_X} "A'";"B'"};
{\ar@{->}^{\ev_Y} "C";"D"};
{\ar@{->}_{\ev_X} "C'";"D'"};
{\ar@{->}^{[d]\otimes\Id} "B";"C"};
{\ar@{->}_{[c]\otimes\Id} "B'";"C'"};
{\ar@{->}^{\on{id}} "A";"A'"};
{\ar@{->}^{\on{id}} "D";"D'"};
{\ar@{->}^{f^*\otimes f^!} "B";"B'"};
{\ar@{->}^{f^*\otimes f^!} "C";"C'"};
{\ar@{=>}_{\al_l} "B";"A'"};
{\ar@{=>}_{\al_m} "C";"B'"};
{\ar@{=>}_{\al_r} "D";"C'"};
\endxy
\end{equation}
where

\smallskip

$\bullet$ $\al_l$ corresponds to the morphism
\[
(f^*\otimes f^!)(u_Y)\to u_{X}\simeq\boxtimes^R((\Dt_X)_*(\om_X)),
\]
obtained by adjunction from the composition
\[
\boxtimes((f^*\otimes f^!)(u_Y))\simeq (f^*\pp f^!)(\boxtimes(u_Y))\to
(f^*\pp f^!)((\Dt_Y)_*(\om_Y))\overset{\form{map 1}}{\lra}(\Dt_X)_*(\om_X),
\]
induced by the natural map $\boxtimes(u_Y)\to (\Dt_Y)_*(\om_Y)$,

\smallskip

$\bullet$ $\al_m$ corresponds to the morphism $[f]^*:f^*\circ [d]\to [c]\circ f^*$ from Section~\re{pb}(a);

\smallskip

$\bullet$ $\al_r$ is the composition

\[
\Gm_{\tri}(Y,-\overset{!}{\otimes}-)\overset{f^*}{\lra} \Gm_{\tri}(X,f^*(-\overset{!}{\otimes}-))\to \Gm_{\tri}(X,f^*(-)\overset{!}{\otimes}f^!(-)).
\]
induced by the canonical morphism $f^*(-\overset{!}{\otimes}-)\to f^*(-)\overset{!}{\otimes}f^!(-)$.
\end{Emp}

\begin{Emp} \label{E:pftrueb3}
(i) We can decompose the diagram of \rt{reftrue}(b) as
\begin{equation} \label{Eq:rttrueb}
\begin{CD}
\Tr(\Shv(Y),[d])@>\form{true}_d>> \Gm_{\tri}(Y, (\Dt_Y^!\circ [d\times\Id]\circ (\Dt_Y)_*)(\om_Y))@>{\text{base change}}>> \Gm_{\tri}(\on{Fix}(d), \om_{\on{Fix}(d)})\\
@VV\Tr([f]^*)V                      @V\form{tracepb2}VV                      @VVg_{\Dt}^*V\\
\Tr(\Shv(X),[c])@>>\form{true}_c> \Gm_{\tri}(X, (\Dt_X^!\circ [c\times\Id]\circ (\Dt_X)_*)(\om_X))@>>{\text{base change}}> \Gm_{\tri}(\on{Fix}(c), \om_{\on{Fix}(c)}),
\end{CD}
\end{equation}
where the top and the bottom arrows are the compositions from Section~\re{reftrue}(b) for $d$ and $c$, respectively, while the middle vertical arrow is induced by the lax commutative square \form{tracepb2}. Therefore, it suffices to show that both inner squares of diagram~\form{rttrueb} are canonically homotopy commutative.

\smallskip

(ii) As in the proof of \rt{reftrue}(a),  all arrows in the left inner square of diagram~\form{rttrueb} are horizontal compositions of the corresponding lax commutative squares.
In particular, in order to show that  the left inner square of diagram~\form{rttrueb} is commutative, it suffices to show that vertical compositions of the corresponding lax commutative squares are homotopic. As in the proof of \rt{reftrue}(a),  it is a routine verification.
\end{Emp}

It remains to show the commutativity of the right inner square of diagram~\form{rttrueb}.

\begin{Emp}
Set  $d^{\on{op}}:=(d_r,d_l):D\to Y\times Y$ and $c^{\on{op}}:=(c_r,c_l):C\to X\times X$.

\medskip

Note that the right inner square of diagram~\form{rttrueb} decomposes as
\begin{equation} \label{Eq:rttrueb2}
\begin{CD}
\Gm_{\tri}(Y, \Dt_Y^![d\times\Id](\Dt_Y)_*(\om_Y))@>>{BC_1}> \Gm_{\tri}(D, (d^{\on{op}})^!(\Dt_Y)_*(\om_Y)) @>>{BC_4}> \Gm_{\tri}(\on{Fix}(d), \om_{\on{Fix}(d)})\\
@Vf^*VV        @Vg^*VV              @VVg_{\Dt}^*V\\
\Gm_{\tri}(X, f^*\Dt_Y^![d\times\Id](\Dt_Y)_*(\om_Y))@>{BC_2}>> \Gm_{\tri}(C, g^*(d^{\on{op}})^!(\Dt_Y)_*(\om_Y)) @>{BC_5}>> \Gm_{\tri}(\on{Fix}(c), g_{\Dt}^*\om_{\on{Fix}(d)})\\
@V(1)VV        @V(2)VV              @VV\on{Gys}_{g_{\Dt}}V\\
\Gm_{\tri}(X, \Dt_X^![c\times\Id](f^*\pp f^!) (\Dt_Y)_*(\om_Y))@>{BC_3}>> \Gm_{\tri}(C, (c^{\on{op}})^!(f^*\pp f^!)(\Dt_Y)_*(\om_Y)) @. \Gm_{\tri}(\on{Fix}(c), g_{\Dt}^!\om_{\on{Fix}(d)})\\
@V\form{map 1}VV @V\form{map 1}VV @|\\
\Gm_{\tri}(X, \Dt_X^![c\times\Id](\Dt_X)_*(\om_X))  @>{BC_3}>> \Gm_{\tri}(C, (c^{\on{op}})^!(\Dt_X)_*(\om_X)) @>{BC_6}>> \Gm_{\tri}(\on{Fix}(c), \om_{\on{Fix}(c)}),
\end{CD}
\end{equation}

\smallskip

where

\smallskip

\noindent$\bullet$ morphism (1) is induced by the composition of morphisms defined in Section~\re{pftrueb2}:
\[
f^*\circ\Dt_Y^!\circ[d\times\Id]\to \Dt_X^!\circ(f^*\pp f^!)\circ[d\times\Id]\to  \Dt_X^!\circ[c\times\Id]\circ(f^*\pp f^!);
\]

\smallskip

\noindent$\bullet$ morphism (2) is induced by the composition
\[
g^*\circ(d^{\on{op}})^!\simeq g^*\circ(\Id_D,d_l)^!\circ(d_r\times\Id_Y)^!\overset{\text{base change}}{\lra} (\Id_X,f\circ c_l)^!\circ(g\times\Id_Y)^*\circ(d_r\times\Id_Y)^!\overset{\form{genbc}}{\lra}
\]
\[
\overset{\form{genbc}}{\lra} (\Id_X,f\circ c_l)^!\circ(c_r\times\Id_Y)^!\circ(f\times\Id_Y)^*\simeq (c^{\on{op}})^!\circ(f^*\pp f^!),
\]
induced by diagrams
\[
\begin{CD}
C @>(\Id_C,f\circ c_l)>> C\times Y @.\;\;\;\;\;\;\;\;\;\;\;\;\;\;\;\;\;\;\;\; C\times Y @>c_r\times\Id_Y>> X\times Y\\
@VgVV @Vg\times\Id_Y VV \;\;\;\;\;\;\;\;\;\;\;\;\;\;\;\;\;\;\;\;  @Vg\times\Id_YVV @VVf\times\Id_YV\\
D @>(\Id_D, d_l)>> D\times Y  @.\;\;\;\;\;\;\;\;\;\;\;\;\;\;\;\;\;\;\;\; D\times Y @>d_r\times\Id_Y>> Y\times Y;
\end{CD}
\]
\smallskip

\noindent$\bullet$ morphism $BC_1$ is induced by the morphism
\begin{equation} \label{Eq:bc1}
\Dt_Y^!\circ[d\times\Id]\to (d_l)_{\tri}\circ(d^{\on{op}})^!,
\end{equation}
obtained from the base change morphism
$\Dt_Y^!\circ(d_l\times\Id)_{\tri}\to (d_l)_{\tri}\circ(\Id_D, d_l)^!$ (the inverse of morphism \form{bcren}),
induced by the Cartesian diagram
\[
\begin{CD}
D @>(\Id_D,d_l)>> D\times Y\\
@Vd_lVV @VV d_l\times\Id_Y V\\
Y @>\Dt_Y>> Y\times Y.
\end{CD}
\]
Note that morphism \form{bcren} is an isomorphism in our case, because morphism $\Dt_Y$ (and hence its pullback $(\Id_D,d_l)$) is representable, thus safe (see Section~\re{verdier}(d));
\smallskip

\noindent$\bullet$ morphism $BC_2$ is induced by the composition of the base change morphisms
\[
f^*\circ\Dt_Y^!\circ[d\times\Id]\overset{\form{bc1}}{\lra} f^*\circ(d_l)_{\tri}\circ(d^{\on{op}})^!\overset{\ref{E:safe1}(b)}{\lra} (c_l)_{\tri}\circ g^*\circ (d^{\on{op}})^!;
\]

\smallskip

\noindent$\bullet$ morphism $BC_3$ is induced by the morphism $\Dt_X^!\circ[c\times\Id]\to (c_l)_{\tri}\circ(c^{\on{op}})^!$, defined similarly to \form{bc1}.
\smallskip

\smallskip

\noindent$\bullet$ morphism $BC_4$ is induced by the base change morphism  $(d^{\on{op}})^!\circ(\Dt_Y)_*\to (\Dt_d)_*\circ d_{\Dt}^!$,
induced by the Cartesian diagram
\[
\begin{CD}
\on{Fix}(d) @>\Dt_d>> D\\
@Vd_{\Dt} VV @VVd^{\on{op}}V\\
Y @>\Dt_Y>> Y\times Y,
\end{CD}
\]
and we use the fact that morphism $\Dt_Y$ (and hence its pullback $\Dt_d$) is representable, thus safe;

\smallskip

\noindent$\bullet$ morphism $BC_6$ is induced by the base change morphism  $(c^{\on{op}})^!\circ (\Dt_X)_*\to (\Dt_{c})_*\circ c_{\Dt}^!$, defined similarly;

\smallskip

\noindent$\bullet$ morphism $BC_5$ is induced by the composition of the base change morphisms
\[
g^*\circ (d^{\on{op}})^!\circ (\Dt_Y)_*\to g^*\circ (\Dt_d)_*\circ d_{\Dt}^!\to  (\Dt_{c})_*\circ g_{\Dt}^*\circ d_{\Dt}^!.
\]
\end{Emp}

It remains to show that all inner squares  of diagram~\form{rttrueb2} are homotopy commutative.

\begin{Emp}
This clear for the bottom left square. Next, unwinding the definitions, the assertion for the top right inner square follows from the fact the functor of Section~\re{ren}(c) is compatible with compositions, while the assertion for the top left square follows from this and Section~\re{safe1}(b)'.

\smallskip

Moreover, the assertion for the middle left inner square reduced to the homotopy commutativity of the diagram
\begin{equation} \label{Eq:rttrueb2'}
\begin{CD}
f^*(d_l)_{\tri}(\Id_D,d_l)^! @>\re{safe1}(b)>> (c_l)_{\tri} g^*(\Id_D,d_l)^! @>\form{genbc}>> (c_l)_{\tri}(\Id_C,f\circ c_l)^!(g\times\Id_Y)^* \\
      @VV\form{bcren}V       @.        @V\form{bcren}VV\\
f^*\Dt_Y^!(d_l\times\Id_Y)_{\tri}  @>\text{base change}>> (\Id_X,f)^!(f\times\Id_Y)^*(d_l\times\Id_Y)_{\tri} @>\re{safe1}(b)>> (\Id_X,f)^!(c_l\times\Id_Y)_{\tri}(g\times\Id_Y)^*.
\end{CD}
\end{equation}

Furthermore, it suffices to show the homotopy commutativity of the diagram, obtained from diagram \form{rttrueb2'} by replacing $(-)_{\tri}$ by $(-)_*$ in all places and all morphisms by the base change morphisms. Finally, the homotopy commutativity of the corresponding diagram follows from the fact that base change morphisms are compatible with compositions.

\smallskip

Thus, it suffices to show that the bottom inner square of diagram~\form{rttrueb2} is homotopy commutative as well. In other words, it suffices to show the homotopy commutativity of the diagram
\begin{equation} \label{Eq:rttrueb3}
\begin{CD}
g^*\circ(d^{\on{op}})^!\circ(\Dt_Y)_* @>\text{base change}>> g^*\circ(\Dt_d)_*\circ d_{\Dt}^! @>\text{base change}>> (\Dt_c)_*\circ g_{\Dt}^*\circ d_{\Dt}^!\\
        @V(2)VV      @.        @VV \on{Gys}_{g_{\Dt}}V\\
(c^{\on{op}})^!\circ(f^*\pp f^!)\circ(\Dt_Y)_*  @>\form{map 1}>> (c^{\on{op}})^!\circ(\Dt_X)_*\circ f^! @>\text{base change}>>  (\Dt_c)_*\circ c_{\Dt}^!\circ f^!.
\end{CD}
\end{equation}

\end{Emp}

\begin{Emp}

Recall that morphism $f$ is  a smooth,  morphism $g$ is quasi-smooth and $\un{\dim}_g=c_r^{\cdot}(\un{\dim}_f)$.
By Section~\re{safe1}(b)', using identifications  $f^!\simeq f^*\lan\un{\dim}_f\ran$ and  $f^*\pp f^!\simeq (f\times f)^*\lan\pr_2^{\cdot}(\un{\dim}_f)\ran$ diagram \form{rttrueb3} can be rewritten as
\begin{equation} \label{Eq:rttrueb4}
\begin{CD}
g^*\circ (d^{\on{op}})^!\circ (\Dt_Y)_* @>\text{base change}>> g^*\circ (\Dt_d)_*\circ d_{\Dt}^! @>\text{base change}>>
(\Dt_c)_*\circ g_{\Dt}^*\circ d_{\Dt}^!\\
        @V\form{genbc}VV      @.        @VV\form{genbc}V\\
(c^{\on{op}})^!\circ(f\times f)^*\circ(\Dt_Y)_*\lan \un{d}_g\ran  @>\text{base change}>> (c^{\on{op}})^!\circ(\Dt_X)_*\circ f^* \lan \un{d}_g\ran @>\text{base change}>>  (\Dt_c)_*\circ c_{\Dt}^!\circ f^* \lan \un{d}_g\ran,
\end{CD}
\end{equation}
where we set $\un{d}_g:=\un{\dim}_g$. Thus it suffices to show that diagram \form{rttrueb4} commutes.

\smallskip

Moreover, by adjunction, it suffices to show the homotopy commutativity of the diagram
\begin{equation} \label{Eq:rttrueb5}
\begin{CD}
\Dt_c^*\circ g^*\circ (d^{\on{op}})^! @>\form{genbc}>> \Dt_c^*\circ (c^{\on{op}})^!\circ(f\times f)^*\lan\un{d}'_g\ran @>\text{base change}>>  c_{\Dt}^!\circ\Dt_X^*\circ(f\times f)^*\lan \un{d}'_g\ran\\
        @|      @.        @|\\
g_{\Dt}^*\circ\Dt_d^*\circ(d^{\on{op}})^!  @>\text{base change}>>  g_{\Dt}^*\circ d_{\Dt}^!\circ \Dt_Y^*   @>\form{genbc}>>      c_{\Dt}^! \circ f^*\circ \Dt_Y^* \lan\un{d}'_g\ran,
\end{CD}
\end{equation}
where we set $\un{d}'_g:=\Dt_c^{\cdot}( \un{\dim}_g)=c_{\Dt}^{\cdot}(\un{\dim}_f)$.
\end{Emp}

\begin{Emp}
Finally, to see the commutativity of diagram~\form{rttrueb5}, it suffices to show that both horizontal compositions are naturally identified with morphism \form{genbc}, corresponding to the commutative diagram
\begin{equation} \label{Eq:final}
\CD
\on{Fix}(c) @>c_{\Dt}>> X \\
 @Vg\circ \Dt_c VV       @VV\Dt_Y\circ f V \\
D @>d^{\on{op}} >> Y\times Y.
\endCD
\end{equation}

\smallskip

Note that diagram \form{final} has two decompositions
\begin{equation} \label{Eq:final1}
\CD
\on{Fix}(c) @>c_{\Dt}>> X @.\;\;\;\;\;\;\;\;\;\;\;\;\;\;\;\;\;\;\;\; \on{Fix}(c) @>c_{\Dt}>> X \\
 @V\Dt_c VV       @VV\Dt_X V \;\;\;\;\;\;\;\;\;\;\;\;\;\;\;\;\;\;\;\; @V g_{\Dt} VV @VVf V\\
C @>c^{\on{op}}>> X\times X  @.\;\;\;\;\;\;\;\;\;\;\;\;\;\;\;\;\;\;\;\; \on{Fix}(d) @>d_{\Dt}>> Y\\
@VgVV             @VVf\times f V\;\;\;\;\;\;\;\;\;\;\;\;\;\;\;\;\;\;\;\; @V\Dt_d VV @VV\Dt_Y V\\
D @>d^{\on{op}} >> Y\times Y, @.\;\;\;\;\;\;\;\;\;\;\;\;\;\;\;\;\;\;\;\; D @>d^{\on{op}} >> Y\times Y.
\endCD
\end{equation}
Using the right diagram of \form{final1}, whose bottom inner square is Cartesian, one sees that diagram \form{final} is pullable, and the
induces morphism
\[
\wt{p}:\on{Fix}(c)\to \on{Fix}(d)\times_Y X\simeq D\times_{Y\times Y}X
\]
is quasi-smooth of relative dimension $\un{\dim}_{g_{\Dt}}-c_{\Dt}^{\cdot}(\un{\dim}_f)=-\Dt^{\cdot}_c(\un{\dim}_g)$.

\smallskip

 Moreover, since
base change morphisms are compatible with compositions, the bottom composition of diagram ~\form{rttrueb5} naturally identifies with morphism \form{genbc}, corresponding to diagram~\form{final}.

\smallskip

Next, the top inner square of the left diagram of \form{final1} decomposes as
\begin{equation} \label{Eq:final2}
\CD
\on{Fix}(c) @>\wt{p}>> D\times_{Y\times Y}X  @>>> X\\
@V\Dt_c VV   @VVV     @VV\Dt_X V \\
C @>p>> D\times_{Y\times Y}(X\times X) @>\wt{d}^{\on{op}}>>  X\times X,
\endCD
\end{equation}
and morphism $p$ is quasi-smooth of relative dimension $\un{\dim}_g-c^{\cdot}(2\un{\dim}_f)=-\un{\dim}_g$.

\smallskip

By \rl{quasi-smooth} below, the left inner square of \form{final2} is homotopy Cartesian. Then the fact that
the top composition of diagram ~\form{rttrueb5} naturally identifies with morphism \form{genbc}, corresponding to diagram~\form{final}
follows from Section~\re{gysin}(f).
\end{Emp}

\appendix

\section{Proof of \rp{safe} and \rco{safe}} \label{S:pfsafe}

Though \rp{safe} can be showed by adapting proofs of the analogous assertions \cite[Theorem~10.2.4 and Corollary~10.2.7]{DG} for $D$-modules, we sketch the argument for completeness.

\smallskip

The following assertion is well-known to specialists.

\begin{Lem} \label{L:group}
Let $G$ be a connected algebraic group over $k$. The following are equivalent:

\smallskip

(i) $G$ is unipotent;

\smallskip

(ii) The canonical morphism  $\qlbar\to\Gm(G,\qlbar)$ is an isomorphism;

\smallskip

(ii)' The cohomology groups $H^i(G,\qlbar)$ vanish for all $i>0$;

\smallskip

(iii) The canonical morphism  $\qlbar\to\Gm(BG,\qlbar)$ is an isomorphism;

\smallskip

(iii)' The complex $\Gm(BG,\qlbar)\in \Vect$ is cohomologically bounded.
\end{Lem}

\begin{proof}
(i)$\implies$(ii) is standard, while (ii)$\implies$(ii)' is clear.

\smallskip

(ii)'$\implies$(i) By Chevalley theorem, there exists an exact sequence of connected algebraic groups
\[
1\to G_1\to G\overset{\pr}{\lra} G_2\to 1,
\]
where $G_1$ is affine and
$G_2$ is an abelian variety. Then $\pr_*(\qlbar)\in \Shv(G_2)$ is a constant complex with value $\Gm(G_1,\qlbar)$, thus
we have
\[
\Gm(G,\qlbar)\simeq \Gm(G_1,\qlbar)\otimes\Gm(G_2,\qlbar).
\]
Hence our assumption (ii)' implies that
\[
H^i(G_1,\qlbar)=H^i(G_2,\qlbar)=0\text{ for all }i>0.
\]
But it is well-known that these conditions imply that $G_2$ is trivial and $G_1$ is unipotent.

\medskip

\noindent{\bf Remarks.} For the rest of the proof, we equip $\Shv(BG)$ with the {\em usual}, that is, non-perverse $t$-structure. Note that the composition \[
\pt\overset{p}{\lra}BG\overset{\pi}{\lra}\pt
\]
is the identity. Since $G$ is connected and $p^*p_*(\qlbar)\simeq \Gm(G,\qlbar)$, the pullback $p^*$ induces an equivalence $\Shv(BG)^{\heartsuit}\to\Shv(\pt)^{\heartsuit}$ on hearts, where the inverse functor is given by the composition
\[
\CH^0\circ p_*:\Shv(\pt)^{\heartsuit}\to \Shv(BG)^{\heartsuit}.
\]

In particular, every object of $\Shv(BG)^{\heartsuit}$ is a constant sheaf. Hence for every $i\in \B{Z}$, the cohomology sheaf $\CH^i(p_*(\qlbar))$ is a  constant sheaf on $BG$ with value $H^i(G,\qlbar)$, thus we have an isomorphism
\begin{equation} \label{Eq:isom}
\pi_*(\CH^i(p_*(\qlbar))\simeq H^i(G,\qlbar)\otimes\Gm(BG,\qlbar).
\end{equation}

\smallskip

\noindent Now we are ready to finish the proof of the lemma:

\medskip

(ii)$\implies$(iii): Assumption (ii) together with isomorphism \form{isom} imply that
\[
\Gm(BG,\qlbar)\simeq\pi_*p_*(\qlbar)\simeq\qlbar.
\]

\smallskip

Since (iii)$\implies$(iii)' is clear, it suffices to show the implication (iii)'$\implies$(ii)'. We set
\[
m:=\max\{i\,|H^i(BG,\qlbar)\neq 0\}\text{ and }n:=\max\{i\,|H^i(G,\qlbar)\neq 0\}.
\]
Then we have a fiber sequence
\[
\tau^{<n}p_*(\qlbar)\to p_*(\qlbar)\to \tau^{\geq n}(p_*(\qlbar))
\]
in $\Shv(BG)$,  hence a fiber sequence
\begin{equation} \label{Eq:fiber}
\pi_*(\tau^{<n}p_*(\qlbar))\to \qlbar\to \pi_*(\tau^{\geq n}p_*(\qlbar))
\end{equation}
in $\Vect$.

Note that since $\tau^{<n}p_*(\qlbar)$ is an extension of $\CH^i(p_*(\qlbar))[-i]$ with $0\leq i\leq n-1$, we get from isomorphism \form{isom} that
$\pi_*(\tau^{<n}p_*(\qlbar))$ is an extension of $H^i(G,\qlbar)[-i]\otimes \Gm(BG,\qlbar)$, therefore
\[
\pi_*(\tau^{<n}p_*(\qlbar))\in \Vect^{<n+m}.
\]

Hence it follows from the fiber sequence \form{fiber} that the induced map
\[
\CH^{m+n}(\qlbar)\to \CH^{n+m}(\pi_*(\tau^{\geq n}p_*(\qlbar)))
\]
between cohomologies is an isomorphism. On the other hand, using isomorphism \form{isom} again we see that
\[
\pi_*(\tau^{\geq n}(p_*(\qlbar)))\simeq H^n(G,\qlbar)[-n]\otimes\Gm(BG,\qlbar),
\]
thus
\[
\CH^{n+m}(\pi_*(\tau^{\geq n}(p_*(\qlbar)))\simeq H^n(G,\qlbar)\otimes H^m(BG,\qlbar)\neq 0.
\]
Hence $\CH^{m+n}(\qlbar)\neq 0$, hence $n+m=0$. Since $n,m\geq 0$, this implies that $n=m=0$.
\end{proof}

The following lemma is standard:

\begin{Lem} \label{L:conserv}
Let $f:X\to Y$ be a surjective morphism between Artin stacks. Then the pullback functors $f^*,f^!:\Shv(Y)\to\Shv(X)$ are conservative and have the property that $\A\in\Shv(Y)$ is constructible if and only if $f^*(\A)\in\Shv(X)$ (resp. $f^!(\A)\in \Shv(X)$) is constructible.
\end{Lem}

\begin{Cor} \label{C:red}
Consider Cartesian diagram of Artin stacks
\begin{equation} \label{Eq:bcred}
\begin{CD}
X'@>f'>> Y'\\
@V a VV    @VVbV\\
X @>f>> Y,
\end{CD}
\end{equation}
where $b$ is surjective. If morphism $f'$ satisfies the property (ii) (resp. (iii)) of \rp{safe}(b), then so is $f$.
\end{Cor}

\begin{proof}
Assume that functor $(f')_*$ is continuous.  Then the composition $b^!\circ f_*\simeq a^!\circ (f')_*$ is continuous. Since $b^!$ is continuous and conservative (by \rl{conserv}), we therefore conclude that $f_*$ is continuous.

Next, if functor $(f')_!$ maps constructible sheaves to constructible, then for every $\A\in \Shv(X)^{\constr}$, the sheaf $b^!\circ f_!(\A)\simeq a^!\circ f'_!(\A)$ is constructible. Hence, the sheaf $f_!(\A)$ is constructible by \rl{conserv}.
\end{proof}

\begin{Emp}
\begin{proof}[Proof of \rp{safe}(a)] \hfill

\medskip

\noindent(ii)$\implies (iii)$: Note that for every constructible $\A\in\Shv(X)^{\constr}$, the functor
\[
\CHom_{\Shv(X)}(\A,-)\simeq\CHom_{\Shv(X)}(\qlbar,\B{D}_X(\A)\overset{!}{\otimes}-)
\]
is continuous, because both functors $\B{D}_X(\A)\overset{!}{\otimes}-$ and $\CHom_{\Shv(X)}(\qlbar,-)$ are continuous.

\medskip

\noindent(iii)$\implies (i)$: Fix a geometric point $x$ of $X$ and set $G:=(G_x)_{\red}^0$. We want to show that $G$ is unipotent.
Note that the natural morphism
$i_x:BG\to X$ is schematic, thus the sheaf $(i_x)_!(\qlbar)\in \Shv(X)$ is constructible. Therefore the sheaf $(i_x)_!(\qlbar)$ is compact (by assumption (iii)), hence the pushforward
\[
(p_X)_!(i_x)_!(\qlbar)=(p_{BG})_!(\qlbar)\in \Shv(\pt)
\]
is compact, thus cohomologically bounded. Then $\Gm(BG,\qlbar)$ is cohomologically bounded, so $G$ is unipotent by \rl{group}.

\medskip

\noindent(i)$\implies$(ii): To make the proof more structural, we will divide it into five steps.

\smallskip

\noindent{\bf Step 1.} Let $j:U\hra X$ be an open embedding, and let $i:Z\hra X$ be the complementary closed embedding. Using fiber sequence
\[
j_!j^*(\qlbar)\to\qlbar\to i_!i^*(\qlbar),
\]
we see that $\qlbar\in \Shv(X)$ is compact if and only if both $\qlbar\in \Shv(U)$ and  $\qlbar\in \Shv(Z)$ are. Thus, by Noetherian induction, we can replace $X$ by an open non-empty substack.

\smallskip

\noindent{\bf Step 2.} Combining Step 1 and \cite[Tag~06RC]{Stacks}, we can assume that $X$ is a gerbe over an algebraic space $Y$.
Let $\pi:X\to Y$ be the projection. Since $\qlbar\in \Shv(Y)$ is compact, it thus suffices to show that the projection $\pi_*$ is continuous.
By \cite[Tag~06QH]{Stacks}, there exists a faithfully flat morphism $Y'\to Y$ such that $X\times_Y Y'\simeq Y'/G$ for some
algebraic group space $G$ over $Y'$. Thus, by \rco{red}, we can replace $\pi$ by its pullback to $Y'$, thus assuming that
$\pi$ is the projection $Y/G\to Y$.

\smallskip

\noindent{\bf Step 3.} Applying Step 1 again, we can replace $Y$ by this open subspace. Thus we can assume that
$Y$ is a connected scheme, and $G$ is a group scheme. Furthermore, we can assume there exists a surjective morphism $Y'\to Y$ such that the reduced pullback $G':=(G\times_Y Y')_{\red}$ satisfies the property that the projection $G'\to Y'$ is smooth, and all fibers of the connected component $(G')^0\to Y$ are integral. Using \rco{red} again, we can replace $\pi$ by its pullback to $Y'$, thus assuming that $G$ is smooth over $Y$ and all geometric fibers of the projection
$G^0\to Y$ are irreducible. Furthermore, since $X/G$ is a safe stack, we conclude that all fibers of the projection $G^0\to Y$ are unipotent.

\smallskip

\noindent{\bf Step 4.} Consider the projection $p:Y\to Y/G$.

\smallskip

(a) Assume first that all geometric fibers of the projection $G\to Y$ are connected and unipotent. In this case, the projection $p:Y\to Y/G$ is smooth with unipotent geometric fibers, hence the pullback $p^*:\Shv(Y/G)\to \Shv(Y)$ is an equivalence of categories with inverse functor $p_*$. Therefore functor $\pi_*\simeq (p_*)^{-1}\simeq p^*$ is an equivalence of categories, thus it is continuous.

\smallskip

(b) Assume now that $G$ is a finite group. In this case, every object $\A\in\Shv(Y/G)$ is a direct factor of $p_*p^*(\A)$.
Thus for every $\CB\in \Shv(Y)^c$ its pullback $\pi^*(\CB)$ is a direct factor of
\[
p_*p^*\pi^*(\CB)\simeq p_*(\CB)\simeq p_!(\CB).
\]
Hence $p^*(\CB)$ is compact, as claimed.

\smallskip

\noindent{\bf Step 5.} Now we are ready to show the assertion. Suppose that we are in the situation of Step 3. Then the morphism $\pi:Y/G\to Y$ decomposes as
\[
Y/G\overset{\pi'}{\lra}Y/G^0\overset{\pi''}{\lra}Y,
\]
so it remains to show that both functors $\pi'_*$ and $\pi''_*$ are continuous. The assertion for $\pi''_*$ follows from Step 4(a).
Next, by \rco{red}, we can replace $\pi'$ with its pullback with respect to the projection $Y\to Y/G^0$. In other words, it suffices to show the continuity of $\pi_*$ when $\pi$ is the projection $Y/\pi_0(G)\to Y$. But this follows from  Step 4(b).
\end{proof}
\end{Emp}

\begin{Emp}
\begin{proof}[Proof of \rp{safe}(b)] First we claim that if $f$ is safe, then $f$ satisfies properties (ii) and (iii) of \rp{safe}(b).

\smallskip

Consider  Cartesian diagram \form{bcred}, where $b$ is a smooth covering and $Y'$ is a scheme of finite type over $k$.
Then $f'$ is safe, and it follows from \rco{red} that the assertion for $f$ follows from that for $f'$. Thus we can assume that $Y$ is a scheme.
In this case, the stack $X$ is safe, so by \rp{safe}(a) every constructible $\A\in \Shv(X)$ is compact.

\smallskip

This implies both assertions: For property (ii) note that every $\A\in\Shv(Y)^c$ is constructible, thus pullback $f^*(\A)$ is constructible, hence  compact. For property (iii) notice that every constructible $\A\in\Shv(X)$ is compact, thus $f_!(\A)$ is compact, hence  $f_!(\A)$ is constructible.

\smallskip

It remains to show that if $f$ satisfies either property (ii) or property (iii) of \rp{safe}(b), then $f$ is safe.

\smallskip

Fix a geometric point $x\in X$, let $G_x:=\Aut_f(x)$ be the stabilizer group, and let $G:=(G_x)_{\red}^0$ be the connected component of its reduction. We want to show that $G$ is unipotent.

\smallskip

Consider the commutative diagram
\[
\begin{CD}
BG@>i_x>> X\\
@V\pi VV    @VVfV\\
\pt @>\eta_{f(x)}>> Y.
\end{CD}
\]
Then $i_x$ and $\eta_{f(x)}$ are safe, thus (by the proven above) functors $(i_x)_*$ and $(\eta_{f(x)})_*$ are continuous. If $f_*$ is continuous, we conclude that $f_*\circ (i_x)_*\simeq (\eta_{f(x)})_*\circ \pi_*$ is continuous. Thus $\pi_*$ is continuous, because $(\eta_{f(x)})_*$ is conservative, which implies that $BG$ is safe (by \rp{safe}(a)), thus $G$ is unipotent.

Similarly, since $i_x$ is safe, we conclude that the pushforward $(i_x)_!(\qlbar)\in\Shv(X)$ is constructible. If $f_!$ preserves constructible sheaves, we conclude that
\[
f_!(i_x)_!(\qlbar)\simeq (\eta_{f(x)})_!\pi_!(\qlbar)
\]
is constructible. Thus  $\pi_!(\qlbar)$ is cohomologically bounded, hence $G$ is unipotent by \rl{group}.
\end{proof}
\end{Emp}

\begin{Emp}
\begin{proof}[Proof of \rco{safe}]
Since both $f_*$ and $f_!$ satisfy smooth base change, we can replace $Y$ by its smooth covering, thus it suffices to show an isomorphism of functors $f_!\simeq f_*$ when $Y$ is a scheme of finite type over $k$.

Since morphism $f: X\to Y$ is supposed to be separated, it follows from a theorem of Olsson \cite[Theorem~1.1]{Ol} that there exists a proper surjective morphism $p:\wt{X}\to X$ from a scheme $\wt{X}$, which is quasi–projective over $Y$. For every $n\in\B{N}$, we denote by $\wt{X}^{(n)}$ the $(n+1)$-times fiber product $\wt{X}\times_X\ldots\times_X \wt{X}$ of $\wt{X}$ over $X$, and let $p^{(n)}:\wt{X}^{(n)}\to X$ be the projection map.

\smallskip

Then both $p^{(n)}:\wt{X}^{(n)}\to Y$ and $f\circ p^{(n)}:\wt{X}^{(n)}\to Y$ are proper morphisms between algebraic spaces, so the assertion of the Corollary holds in these cases\footnote{Alternatively, it can be deduced from the corresponding result for schemes by repeating the argument below.}, and thus we have a canonical isomorphism of functors
\begin{equation} \label{Eq:propersafe}
f_!\circ (p^{(n)})_!\simeq (f\circ p^{(n)})_!\simeq (f\circ p^{(n)})_*\simeq f_*\circ (p^{(n)})_*\simeq f_*\circ (p^{(n)})_!.
\end{equation}

Since $p$ is proper and surjective, the natural functor
\[
\colim_{[n]\in\Dt^{\on{op}}}(p^{(n)})_!\circ (p^{(n)})^!\to\Id_{\Shv(X)}
\]
is an isomorphism\footnote{Moreover, $\Shv(-)$ is a sheaf in the $h$-topology.}. Since both $f_!$ and $f_*$ are continuous, we thus get a natural isomorphism
\[
f_!\simeq  f_!\circ\colim_{[n]\in\Dt^{\on{op}}}(p^{(n)})_!\circ (p^{(n)})^!\simeq\colim_{[n]\in\Dt^{\on{op}}}f_!\circ (p^{(n)})_!\circ (p^{(n)})^!\overset{\form{propersafe}}{\simeq}
\]
\[
\overset{\form{propersafe}}{\simeq} \colim_{[n]\in\Dt^{\on{op}}}f_*\circ (p^{(n)})_!\circ (p^{(n)})^!\simeq f_*\circ \colim_{[n]\in\Dt^{\on{op}}}(p^{(n)})_!\circ (p^{(n)})^!\simeq f_*,
\]
as claimed.
\end{proof}
\end{Emp}

\section{Quasi-smooth morphisms} \label{S:proof}

\begin{Emp} \label{E:FYZ}
{\bf Observations.}

\smallskip

(a) For every derived Artin stack $X$, the cotangent complex $T^*(X_{\on{cl}}/X)$ lies in the cohomological degrees $\leq -2$. Indeed, passing to a smooth covering, we can assume that $\wt{A}$ is an affine derived scheme. In this case, the assertion follows from \cite[Corollary~25.3.6.4]{SAG}.

\smallskip

(b) By part~(a), the morphism $X_{\on{cl}}\to X$ is never quasi-smooth, if $X$ is not classical.
\end{Emp}

\begin{Lem} \label{L:quasi-smooth}
Let
\begin{equation} \label{Eq:basic3}
\CD
A @>a>> C\\
@VgVV @VVfV\\
B @>b>> D
\endCD
\end{equation}
be a Cartesian diagram of Artin stacks that morphisms $f$ and $g$ are quasi-smooth and satisfy $\un{\dim}_g=a^{\cdot}(\un{\dim}_f)$. Then
diagram \form{basic3} is homotopy Cartesian.
\end{Lem}

\begin{proof}
We have to show that the canonical morphism $p:A\to \wt{A}:=B\times_D^h C$ is an equivalence. Since the induced morphism
$A\to\wt{A}_{\on{cl}}$ is an isomorphism (since diagram \form{basic3} is Cartesian), it suffices to show that the cotangent complex $T^*(A/\wt{A})$ vanishes
(by \cite[Corollary~25.3.6.6]{SAG}).

Note that a sequence of morphisms $A\to\wt{A}\to B$ gives rise to a fiber sequence
\[
p^*(T^*(\wt{A}/B))\to T^*(A/B)\to T^*(A/\wt{A}).
\]
Since morphisms $f$ and $g$ are quasi-smooth, we conclude that complexes $T^*(A/B)$ and $p^*(T^*(\wt{A}/B))\simeq a^*(T^*(C/D))$ are perfect
of Tor-amplitude $\geq -1$ with Euler characteristics $\un{\dim}_g$ and $a^{\cdot}(\un{\dim}_f)$, respectively. Therefore the cotangent complex
$T^*(A/\wt{A})$ is perfect of Tor-amplitude $\geq -2$ with Euler characteristic zero $\un{\dim}_g-a^{\cdot}(\un{\dim}_f)=0$.

On the other hand, since $A\simeq\wt{A}_{\on{cl}}$, the cotangent complex $T^*(A/\wt{A})\simeq T^*(\wt{A}_{\on{cl}}/\wt{A})$ lies in the cohomological degrees $\leq -2$ (see Section~\re{FYZ}(a)). Thus  $T^*(A/\wt{A})\simeq \CF[2]$ for certain locally free $\O_A$-module  $\CF$. Moreover, since the Euler characteristic of $T^*(A/\wt{A})\simeq \CF[2]$ is zero, we conclude that $T^*(A/\wt{A})$ vanishes, as claimed.
\end{proof}

\begin{Lem} \label{L:derived}
Let $f:X\to Y$ be a quasi-smooth morphism between derived Artin stacks such that $X$ is classical.
Then the induced morphism $f_{\on{cl}}:X\to Y_{\on{cl}}$ is classical, the canonical map
$T^*(X/Y_{\on{cl}})\to T^*(X/Y)$ is an isomorphism, and we have an equality $\un{\dim}_{f_{\on{cl}}}=\un{\dim}_f$.
\end{Lem}

\begin{proof}
Let $\wt{f}:\wt{X}\to Y_{\on{cl}}$ be the homotopy pullback of $f$ under $Y_{\on{cl}}\to Y$. Then $\wt{f}$ is quasi-smooth, and
$f_{\on{cl}}$ is a retract of $\wt{f}$, hence $f_{\on{cl}}$ is quasi-smooth. To show the second assertion, notice that sequence of morphisms
$X\to Y_{\on{cl}}\to Y$ gives rise to a fiber sequence
\[
f_{\on{cl}}^*T^*(Y_{\on{cl}}/Y)\to T^*(X/Y_{\on{cl}})\to T^*(X/Y).
\]
Moreover, since $T^*(Y_{\on{cl}}/Y)$ and hence also $f_{\on{cl}}^*T^*(Y_{\on{cl}}/Y)$ lies in cohomological degrees $\leq -2$, while
$T^*(X/Y_{\on{cl}})$ and $T^*(X/Y)$ lie in cohomological degrees $\geq -1$, we get that $f_{\on{cl}}^*T^*(Y_{\on{cl}}/Y)=0$, thus
the map $T^*(X/Y_{\on{cl}})\to T^*(X/Y)$ is an isomorphism. The third assertion follows immediately from the second one.
\end{proof}

\end{document}